\documentclass[11pt]{article}
\usepackage{amsmath}
\usepackage{amsthm}
\usepackage{amssymb}
\usepackage{latexsym}
\usepackage{lscape}
\usepackage{epsfig}
\usepackage{pstricks}
\usepackage{enumerate}
\usepackage{exscale}
\usepackage{relsize}
\usepackage{amsmath}
\usepackage{indentfirst}
\usepackage{graphicx} 
\usepackage{epstopdf}
\usepackage{lineno}
\usepackage{subcaption}
\usepackage{authblk}	
\usepackage{graphicx}
\usepackage[square,sort,comma,numbers]{natbib}

\oddsidemargin 0in \topmargin 0in \textwidth 6.2in \textheight 8.4in 
\baselineskip=20pt
\parskip=2mm
\parindent=20pt

\newtheorem{theorem}{Theorem}[section]
\newtheorem{lemma}[theorem]{Lemma}
\newtheorem{proposition}[theorem]{Proposition}
\newtheorem{corollary}[theorem]{Corollary}
\theoremstyle{definition}
\newtheorem{definition}{Definition}
\newtheorem{example}[theorem]{Example}

\theoremstyle{remark}
\newtheorem{remark}{Remark}

\textwidth 6.4in
\oddsidemargin 0.1in 
\evensidemargin 0.1in 
\textheight 8.4in 
\topmargin -0.0in

\usepackage{mathrsfs,graphicx,latexsym,tikz,color,euscript}
\usepackage{amsfonts,amssymb,amsmath,amscd,amsthm}
\usepackage{epstopdf}
\usepackage{CJKutf8}
\usepackage[colorlinks=true,linkcolor=blue,urlcolor=blue,citecolor=blue,linkbordercolor=white,urlbordercolor=white,citebordercolor=white]{hyperref}
\usepackage{color}

\newcommand{\beq}{\begin{eqnarray*}}
	\newcommand{\eeq}{\end{eqnarray*}}

\theoremstyle{plain}

\theoremstyle{definition}

\begin{document}

	\title{Birkhoff sections in $3$-manifold with invariant toric foliation}
	
	\author{
        Wentian Kuang$^{1,}$\thanks{Partially supported by National Natural Science Foundation of China(No.12271300, No.11901279). E-mail: kuangwt@gbu.edu.cn}\\
		$^{1}${\small School of Sciences, Great Bay University,
			Dongguan, 523000, P.R. China,\\Great Bay Institute for Advanced Study, Dongguan 523000, China.\\}
	}
	
	\date{}
	
	\maketitle
	
	\begin{abstract}
		In this paper, we study the Birkhoff sections in a $3$-manifold foliated by invariant tori. We establish the necessary and sufficient conditions for various types of periodic orbits to serve as boundary orbits of a Birkhoff section. The construction relies on the dynamical behaviour of the flow combined with fundamental topological argument. As an application,  we study the boundaries of toric domains and the energy hypersurfaces of separable Hamiltonian systems, providing conditions for the existence or non-existence of different types of Birkhoff sections. Additionally, we offer an alternative proof of part of the results presented in \cite{vanKoert2022} and \cite{Hutchings2022}.
	\end{abstract}

\section{Introduction}
\subsection{History and some related results.}
     The notion of a Birkhoff section(see Definition \ref{def:birkhoffsection}) is a classical tool for studying flows on $3$-manifolds, originating from the works of Poincaré and Birkhoff. A global surface of section is a special case of a Birkhoff section that is embedded in the manifold. Such sections enable the reduction of the dynamics of a vector field on a $3$-manifold to the study of the return map on the section. In this framework, periodic orbits of the flow correspond to periodic points of the return map on the Birkhoff section.
     
     Another advantage of a Birkhoff section is that it reveals certain favorable properties of the flow. When a Birkhoff section exists, the manifold can be foliated by an $S^1$ family of Birkhoff sections, giving rise to a rational open book decomposition. Moreover, if the Birkhoff section is a global surface of section, this yields an open book decomposition of the manifold..
     
     A classical result of Birkhoff \cite{Birkhoff1966} demonstrated that the geodesic flow on a positively curved Riemannian $2$-sphere admits an annulus-like Birkhoff section. In  \cite{Fried1983},  Fried showed that any transitive Anosov vector field on a closed 3-manifold admits a Birkhoff section. In the last few decades, the study of Birkhoff sections and global surface of sections have attracted considerable interest.  It should be noted that not every flow on a closed 3-manifold admits Birkhoff sections. It is necessary that the flow must have periodic orbits, which does not always hold by the counterexample constructed by Kuperburg \cite{Kuperburg1996}.  In \cite{Taubes2007}, Taubes proved the Weinstein conjecture in dimension $3$, establishing that every Reeb vector field on a closed 3-manifold possesses a periodic orbit. This naturally raises the question of whether every Reeb vector field admits a Birkhoff section, as conjectured in  \cite{Colin2023}.
     
     The work of Giroux \cite{Giroux2002} implies that every contact structure can be supported by a contact form whose Reeb vector field admits a global surface of section. However, for a given Reeb vector field, determining whether a Birkhoff section exists is typically a challenging problem.
     
     In the celebrated paper \cite{Hofer1998}, Hofer, Wysocki, and Zehnder demonstrated that the canonical Reeb flow of any convex $3$-sphere embedded in $\mathbb{R}^4$ admits a global surface of section diffeomorphic to a disk. Combining this result with Franks' theorem \cite{Franks1992}, they showed that such a Reeb flow must have either two or infinitely many closed Reeb orbits, which is a special case of Hofer-Wysocki-Zehnder's "two or infinity" conjecture. This conjecture have been proven under various hypothesis, see \cite{Hofer2003,VPUA2024,Dan2019,Colin2023}. Recently, in \cite{Liu2024}, the authors proved the conjecture in cases where the associated contact structure has torsion first Chern class, with the existence of a Birkhoff section playing a crucial role in their argument.
     
    By combining the theory of embedded contact homology \cite{Hutchings2014} with Fried’s techniques in \cite{Fried1983}, Colin, Dehornoy and Rechtman \cite{Colin2023} proved that every non-degenerate contact form on $3$-manifold is carried by a broken book decomposition, which generalized the concept of an open book decomposition. Based on this, it is proved that a $C^{\infty}$-generic contact form on a closed $3$-manifold admits a Birkhoff section \cite{VPUA2024,CM2022}. It is conjectured in \cite{Colin2023} that any Reeb flow admits a Birkhoff section. This conjecture, if true, would further imply that every Reeb flow has either two or infinitely many closed orbits.
     
     Although the existence of a Birkhoff section is ensured in many situations, the precise shape of the Birkhoff section for a given flow often remains unknown. The ideal scenario is that the Birkhoff section becomes a global surface of section with a simple topology. It is proved in \cite{Hofer1998} that a dynamically convex contact form on $S^3$ admits a disk-like global surface of section, which is the simplest type of Birkhoff section. In \cite{HS2011}, Hryniewicz and Salom\~ao provided necessary and sufficient conditions for a Reeb orbit on tight $S^3$ to bound a disk-like global surface of section. In \cite{Hryniewicz2014}, it's proved that any Hopf orbit(i.e. the orbit is unknotted and has self-linking number $-1$.) for a dynamically convex contact form on $S^3$ bounds a disk-like global surface of section. In \cite{HSW2023}, the authors exhibit sufficient conditions for a finite collection of periodic orbits of a Reeb flow on a closed 3-manifold to bound a positive global surface of section. Most of these results rely on the theory of pseudo‐holomorphic curves. Readers can refer to \cite{Pedro2022} for a survey on recent developments in transverse foliations for Reeb flows via pseudo-holomorphic curves.
     
     Note that the boundary orbit of a disk-like global surface of section must be linked with all other Reeb orbits. In \cite{vanKoert2020}, van Koert constructed a Reeb flow on $S^3$ that does not admit a disk-like global surface of section by contradiction to the linking condition. In \cite{vanKoert2022}, the authors extended these results to show the non-existence of global surfaces of section with fewer than $n$ boundary components.
     
     Recently, Edtmair \cite{Edtmair2024} prove that the cylinder capacity of a dynamical convex domains in $\mathbb{R}^4$ agree with the least symplectic area of a disk-like surface of section for the Reeb flow on the boundary of the domain. This result establishes a connection between global surfaces of section and symplectic capacities. Note that there are many different symplectic capacities. The strong Viterbo conjecture states that all normalized capacities coincide for convex domains in $\mathbb{R}^4$. Since every normalized capacity $c$ satisfy $c_B\le c\le c_Z$, where $c_B$ is the ball capacity(also known as the Gromov width) and $c_Z$ is the cylinder capacity, $c_B=c_Z$ implies the strong Viterbo conjecture. However, Haim-Kislev and Ostrover \cite{HO2024} recently  constructed convex bodies in $\mathbb{R}^{2n}$, for which the strong Viterbo conjecture fails. In \cite{Hutchings2022}, Gutt and Hutchings proved the strong Viterbo conjecture holds for dynamically convex toric domains in $\mathbb{R}^4$.  Due to \cite{ZhangJun2024}, a dynamically convex toric domain needs not be convex. It's mysterious to find suitable conditions under which the strong Viterbo conjecture holds.
     
     The aim of this paper is to study Birkhoff sections for the flows that admit a regular invariant toric foliation(Definition \ref{def:toricfoliation}). As an application, we investigate the properties of Birkhoff sections on the boundaries of toric domains and on regular energy hypersurfaces of separable Hamiltonian systems.  Our construction provide an alternative proof of part of the results established in \cite{vanKoert2022} and \cite{Hutchings2022}.
     
\subsection{Settings and main results}
Throughout this paper, all vector fields are assumed to be sufficiently smooth. Let $M$ be an oriented $3$-manifold equipped with a non-vanishing vector field $X$. Denote $\phi^t$ to be the corresponding flow generated by $X$.
\begin{definition}\label{def:birkhoffsection}
	A Birkhoff section for the flow of $X$ is an immersion $\iota: S\rightarrow M$ defined on a compact surface $S$ such that:
	\begin{itemize}
		\item[(1)] if $\partial S\ne \emptyset$, then $\iota(\partial S)$ consists of periodic orbits of $X$;
		\item[(2)] $\iota^{-1}(\iota(\partial S))=\partial S$ and $\iota: S\backslash\partial S\rightarrow M\backslash \iota(\partial S)$ defines an embedding transverse to $X$; 
		\item[(3)] for every $p\in M$, there exist $t_-<0<t_+$ such that $\phi^{t_\pm}(p)\in \iota(S)$.
	\end{itemize}
	
\end{definition}

If $\iota$ is simultaneously an embedding and a Birkhoff section, then $\iota(S)$ is called a global surface of section for the flow of $X$. When there is no ambiguity, we often denote $S\subset M$ as a Birkhoff section or a global surface of section, instead of $\iota:S\rightarrow M$. 

In this paper, we study Birkhoff sections, or global surfaces of section, for integrable systems. The flow under consideration satisfies the following conditions:

\begin{definition}\label{def:toricfoliation}
	Let $M$ be an oriented $3$-manifold with a non-vanishing vector field $X$. $M$ admits a regular invariant toric foliation if 
	\begin{itemize}
		\item[(1)] There is an open dense set $O\subset M$ foliated by invariant tori, such that the flow on each torus is conjugate to a linear motion,
		\item[(2)] Every connected component of $M\backslash O$ is either a periodic orbit  or a regular broken torus(see Definition \ref{def:brokentorus}).
	\end{itemize}
\end{definition}

The regular energy hypersurfaces for Liouville integrable Hamiltonian systems certainly satisfy this condition.
Also, the flow on the boundary of a toric domain is foliated by invariant tori. It is worth noting that the theory of pseudo‐holomorphic curves usually requires the Reeb flow to be non-degenerate. However, this non-degeneracy condition is not satisfied for an integrable system, as periodic orbits are not isolated on a rational invariant torus.

Note that if a Birkhoff section intersects an invariant torus transversally, the intersection curve should be a closed transverse curve on the invariant torus. Our strategy is to construct transverse curves on all invariant tori such that these curves change smoothly as invariant tori vary. The union of all these curves will form a Birkhoff section. Usually, these curves cannot always be made transverse to the flow. When a curve fails to be transverse, it corresponds to a boundary orbit of the Birkhoff section. A crucial point in our construction is the following fundamental observation: \textbf{the homology type of transverse curves on a family of invariant tori remains constant(Lemma \ref{lem:invarianthomology}).}

To define the homology type of a transverse curve on a torus, we must first choose two generators of the first homology group of a torus. Additionally, an orientation for the transverse curve is required. It easy to see that an orientation of an invariant torus extends to all tori in $M$. By definition of Birkhoff section, we can assume that these transverse curves on the tori are oriented in such a way that the flow is positively transverse to the curve.

A torus can be viewed as a rectangle with opposite edges identified,a fact that will be used frequently throughout this paper. The two natural generators of the first homology group are the horizontal and vertical lines. A closed curve is called $(p,q)$-type if the corresponding homology type is $(p,q)$. In general, there may be different families of invariant tori in $M$, and the natural generators for each family might not coincide. There could be a non-trivial correspondence between the generators of different families, making it not always straightforward to define the homology types of closed curves across all invariant tori. However, as we will discuss in Section 3, this does not pose any issues for our construction.  For simplicity, we focus on the case where there is a consistent way to define homology types of closed curves on all invariant tori(i.e. the natural generators of different families of tori coincide). 

Our results include a series of lemmas and propositions. Below is a summary of the results concerning the conditions for the existence of boundary orbits of a Birkhoff section.
\begin{theorem}[Theorem \ref{thm:main}]\label{thm:main_intro}
	If $M$ admits a regular toric invariant foliation and the natural generators on different families of tori coincide, then there is a consistent way to define the homology of closed curves on invariant tori in $M$. Let $S$ be a Birkhoff section in $M$, we have
	\begin{itemize}
		\item[(1)] If a family of tori ends in an invariant vertical line $P_v$ with transverse curves of $(p,q)$-type. Then $\partial S\cap P_v=0$ if and only if $q=0$. Otherwise, the boundary curve is a $|q|$-fold covering of the periodic orbit;
		\item[(2)] If a family of tori ends in an invariant horizon line $P_h$ with transverse curves of $(p,q)$-type. Then $\partial S\cap P_h=0$ if and only if $p=0$. Otherwise, the boundary curve is a $|p|$-fold covering of the periodic orbit;
		\item[(3)] Let $T_0$ be an invariant tori of $(P,Q)$-type and $(p^+,q^+),(p^-,q^-)$ are corresponding types of transverse curves in two nearby families of tori. Then $\partial S\cap T_0\ne \emptyset$ if and only if $(p^+,q^+)$ is not a multiple of $(P,Q)$ and $(p^+,q^+)-(p^-,q^-)=k(P,Q)$ for some $k\ne 0$. The boundary curve will be $|k|$-covering of the periodic orbit.
		\item[(4)] If $B$ is a broken torus with invariant vertical lines, then $B\cap \partial S= \emptyset$ if and only if 
		\begin{equation}
			p_i^-=p_j^+\ne 0, \quad \sum_{i=1}^k q_i^-=\sum_{j=1}^{l}q_j^+.
		\end{equation}
		\item[(5)] If $B$ is a broken torus with invariant horizon lines, then $B\cap \partial S= \emptyset$ if and only if 
		\begin{equation}
			\sum_{i=1}^k p_i^-=\sum_{j=1}^{l}p_j^+, \quad 	q_i^-=q_j^+\ne 0.
		\end{equation}
	\end{itemize}
where $(p_i^-,q_i^-),(p_j^+,q_j^+)$ are homology types of transverse curves on different families of tori in neighbourhood of $B$.
\end{theorem}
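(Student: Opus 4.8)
The plan is to localize around each singular component named in the statement --- a periodic orbit $P_v$ or $P_h$, a rational torus $T_0$, or a broken torus $B$ --- by choosing a closed regular neighborhood $N$ and comparing, in $H_1(N)$, the homology class of the family $\gamma$ of transverse curves that $\iota(S)$ cuts on the invariant tori forming $\partial N$ with the class carried by $\iota(\partial S)\cap N$. Two facts will be used throughout. First, since $M$ is oriented and $X$ is non-vanishing, $S$ carries the orientation for which the flow is positively transverse; hence every component of $\gamma$ on a fixed invariant torus is coherently oriented, and wherever $\iota(\partial S)$ covers a periodic orbit all the local degrees have the same sign, so total degrees are genuine covering multiplicities. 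Second, Lemma \ref{lem:invarianthomology} guarantees that the homology type of the transverse curves is constant along each family of invariant tori, so a well-defined type $(p,q)$ (resp.\ $(p^{\pm},q^{\pm})$) is attached to each family abutting $N$. The rest is a sequence of homological identities.

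For (1) and (2) I would take $N$ to be the solid torus with core the periodic orbit $P$, so $H_1(N)\cong\Z\langle[P]\rangle$; if $P$ is vertical the meridian of $N$ is the horizontal generator, whence $[\gamma]=q[P]$ (and $[\gamma]=p[P]$ if $P$ is horizontal). The oriented boundary of $\iota^{-1}(N)$ is $\gamma$ minus the cover of $P$ contained in $\partial S$, so in $H_1(N)$ one gets $q[P]=d[P]$, where $d$ is the total covering degree of $\partial S$ over $P$. Therefore $\partial S\cap P=\emptyset$ forces $q=0$, while if $q\neq0$ then $d\neq0$, so $P$ is a boundary orbit, and by the first fact above it is covered exactly $|q|$ times. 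Part (2) is identical with the roles of $p$ and $q$ interchanged.

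For (3) I would use the collar $N=T_0\times[-\epsilon,\epsilon]$ with $T_0\times\{0\}=T_0$, so $H_1(N)\cong H_1(T_0)\cong\Z^2$; the transverse curves on $T_0\times\{\pm\epsilon\}$ have classes $(p^{\pm},q^{\pm})$, and every periodic orbit inside $N$ lies on $T_0$ with class $(P,Q)$. Evaluating the boundary of $\iota^{-1}(N)$ in $H_1(N)$ yields $(p^+,q^+)-(p^-,q^-)=k(P,Q)$, where $k$ is the total covering degree of $\partial S$ over the orbits of $T_0$; hence $\partial S\cap T_0=\emptyset$ iff $(p^+,q^+)=(p^-,q^-)$, and when $\partial S\cap T_0\neq\emptyset$ the difference is automatically a nonzero multiple of $(P,Q)$ and the boundary is a $|k|$-fold cover. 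The clause that $(p^+,q^+)$ is not a multiple of $(P,Q)$ should be obtained as a consequence of $S$ being a Birkhoff section rather than imposed: at any point where $\iota(S)$ meets $T_0$ the linear flow direction is tangent to $T_0$ but not to $S$, so $\iota(S)\cap T_0$ is transverse to that flow and thus has nonzero intersection number with $(P,Q)$; a short limiting argument carries this over to the nearby transverse curves, so no configuration with $(p^{\pm},q^{\pm})$ parallel to $(P,Q)$ arises.

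For (4) and (5) the same scheme should work with $N$ a regular neighborhood of the broken torus $B$, which by Definition \ref{def:brokentorus} is a branched union of annuli glued along the periodic orbits it contains, with $k$ families of invariant tori limiting onto it from one side and $l$ from the other. Removing those orbits decomposes $N$ into collar pieces of the type handled in (3), and a Mayer--Vietoris computation expresses the obstruction over each orbit of $B$ through the adjacent types; its total vanishing --- which is exactly $\partial S\cap B=\emptyset$ --- becomes the linear system that, in the vertical case, reduces to $p_i^-=p_j^+$ for all $i,j$ together with $\sum_i q_i^-=\sum_j q_j^+$, and dually in the horizontal case, while the strict inequalities $p_i^-=p_j^+\neq0$ (resp.\ $q_i^-=q_j^+\neq0$) record that each transverse curve must genuinely wind around the orbits of $B$. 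I expect (4) and (5) to be the main obstacle: getting the correct local model of a regular broken torus and handling the orientation bookkeeping in the Mayer--Vietoris step carefully --- so as to land on the precise equations and on the strict inequalities rather than on weaker ``some multiple'' statements --- is where the real work lies, with the sign-coherence fact again responsible for turning total degrees into honest covering multiplicities.
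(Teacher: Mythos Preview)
Your homological plan cleanly handles one half of each equivalence but misses the other, and this is precisely where the paper does its real work.

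The paper assembles the theorem from Lemma~\ref{lem:endoftori}, Proposition~\ref{pro:nontranstorus}, and Proposition~\ref{pro:nontransbroken}. For (1)--(2) both approaches agree: your solid-torus argument is essentially Lemma~\ref{lem:endoftori} recast in homology. For (3), your identity $(p^+,q^+)-(p^-,q^-)=k(P,Q)$ with $k$ the total covering degree is correct, but the implication ``$k=0\Rightarrow\partial S\cap T_0=\emptyset$'' rests on your sign-coherence claim, which is more delicate than you indicate. The paper instead introduces adapted rectangle coordinates on $T_0$, straightens the incoming curves to ``horizontal segments plus a single orbit'' (Lemma~\ref{lem:limitcurve}), and then argues that when $\beta^+=\beta^-$ the boundary can be \emph{deformed away}; the statement is thus about which configurations are realizable, and the paper's proof is a construction, not an a~priori sign count. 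Your argument would need to rule out the ``removable'' configuration the paper explicitly discusses, which your sign-coherence assertion does not obviously do for a fixed~$S$.

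The real gap is in (4)--(5). You correctly flag these as the main obstacle, but you misidentify where the difficulty lies. The direction ``$B\cap\partial S=\emptyset\Rightarrow$ numerical conditions'' is the \emph{easy} half, and the paper proves it by elementary intersection-number counting (Proposition~\ref{pro:homology}), with no need for Mayer--Vietoris. The substantive half is the converse: given $p_i^-=p_j^+\neq0$ and $\sum_i q_i^-=\sum_j q_j^+$, actually produce a transverse surface through the broken torus $B$. The paper does this by homotoping the transverse curves on the abutting tori to be horizontal near the invariant vertical lines of $B$, so that the $q$-part of the homology is concentrated on the open annuli; one must then distribute the common total $\sum_i q_i^-=\sum_j q_j^+$ among those annuli so that every adjacent family of tori sees its correct $q$-value. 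This becomes a combinatorial problem on the planar graph associated to $B$ (vertices $=$ invariant lines, edges $=$ annuli, faces $=$ families of tori), and the paper solves it by an inductive face-by-face assignment, after first peeling off the degree-one faces. Once the distribution is found, Corollary~\ref{cor:homotopy} glues the pieces. None of this is supplied by a Mayer--Vietoris computation, which only tells you the obstruction vanishes, not how to build the section.
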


By Theorem \ref{thm:main_intro} and related lemmas, we can construct various Birkhoff sections if sufficient information is available about the dynamics on different families of tori.  This approach can also be applied to construct examples of Birkhoff sections of different types. 

\begin{corollary}
	If $M$ is an oriented $3$-manifold with a non-vanishing smooth vector field $X$ and $M$ admits a regular invariant toric foliation with finitely many broken tori, then the flow admits a Birkhoff section.
\end{corollary}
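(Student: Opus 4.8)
The plan is to build the Birkhoff section piece by piece over the decomposition of $M$ into families of invariant tori, periodic orbits and finitely many broken tori, using Theorem~\ref{thm:main_intro} and Lemma~\ref{lem:invarianthomology} as the structural backbone. First I would observe that since $M$ admits a regular invariant toric foliation with only finitely many broken tori, the open dense set $O$ decomposes into finitely many \emph{maximal} families of invariant tori, each of which is an interval's worth of tori parametrized (say) by a coordinate $s$; the flow on the torus $T_s$ is conjugate to a linear flow with some slope $\rho(s)$, and $\rho$ varies continuously (indeed smoothly) in $s$. On each such family I choose transverse curves of a fixed homology type $(p,q)$ — which is forced to be constant along the family by Lemma~\ref{lem:invarianthomology} — and the only genuine freedom is in choosing this constant for each of the finitely many families. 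The curves can be taken to vary smoothly in $s$ and to sweep out an embedded annulus-like piece transverse to $X$ in the interior of each family, so conditions (2) and (3) of Definition~\ref{def:birkhoffsection} are automatic on the interior pieces; the content is entirely in matching up the pieces along the periodic orbits and broken tori at the ends.

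Next I would run a counting/compatibility argument to fix the homology labels on all the families simultaneously. Think of the families as edges and the periodic orbits / broken tori as vertices of a finite graph $G$ (the ``reduced'' quotient of $M$ by the foliation); Theorem~\ref{thm:main_intro} tells us exactly which $(p,q)$-labels on the edges incident to a given vertex are \emph{compatible}, in the sense that the candidate curves on the two sides glue to a genuine surface whose boundary is a (possibly empty, possibly multiply-covered) collection of the periodic orbits at that vertex. For a vertex which is a single invariant vertical line $P_v$ (cases (1),(4)) the constraint is of the form ``the first coordinates agree and are nonzero, and the second coordinates add up correctly''; for an invariant torus $T_0$ of type $(P,Q)$ (case (3)) the constraint is ``the difference of the two labels is a multiple of $(P,Q)$''; and symmetrically for horizontal lines. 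The key point is that these constraints are \emph{affine} over $\Z$ and, crucially, always \emph{solvable}: given any choice of label on one edge, one can always propagate a consistent choice to the adjacent edges (e.g.\ at a $(P,Q)$-torus one simply adds a suitable multiple of $(P,Q)$; at a vertical-line vertex one matches the first coordinate and distributes the second). Because $G$ is finite one can then proceed by induction on the edges — fix a spanning tree, choose an arbitrary admissible label on the root edge, and extend uniquely along the tree — and finally check that the finitely many ``extra'' constraints coming from non-tree edges can be met, which they can since at worst they impose a divisibility condition that is absorbed by rescaling all labels by a common integer (replacing every $(p,q)$ by $(Np,Nq)$ multiplies all the curves by $N$-fold covers and does not affect transversality or the section property, only the number of times the boundary wraps).

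The main obstacle, and the step I would spend the most care on, is verifying that the locally-glued pieces assemble into a single connected immersed surface $S$ satisfying the \emph{return} condition (3) of Definition~\ref{def:birkhoffsection} globally — i.e.\ that every orbit actually hits $S$ in forward and backward time, not merely the orbits lying in $O$. Orbits inside an irrational invariant torus are dense on that torus and hence meet the transverse curve on it; orbits on rational tori and the periodic orbits themselves need to be handled by noting that the transverse curve on a nearby torus, together with the flow, still captures them, or that they lie in $\iota(\partial S)$ by construction; and the orbits sitting inside a broken torus require using the explicit local model of Definition~\ref{def:brokentorus} together with the compatibility relations (4),(5) to see that the glued curve there is genuinely transverse away from the periodic orbit and that the stable/unstable-type behaviour forces return. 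A secondary point to nail down is that the immersion has the correct self-intersection behaviour $\iota^{-1}(\iota(\partial S))=\partial S$: interior pieces are embedded, and the only identifications happen along the boundary periodic orbits, where several local annuli may be glued with multiplicity — this is exactly what makes $S$ a Birkhoff section rather than a global surface of section, and it is consistent with Definition~\ref{def:birkhoffsection}(2). Once transversality and the return property are checked on each of the finitely many building blocks and shown to be compatible across gluings, compactness of $S$ follows from compactness of $M$ and finiteness of the decomposition, completing the construction.
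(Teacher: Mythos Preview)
Your overall plan---encode the finitely many families of tori, periodic orbits, and broken tori as a finite graph and propagate homology labels using Theorem~\ref{thm:main_intro}---is the same mechanism the paper uses, only more formally packaged; the paper's proof is a bare greedy propagation (start at any torus, pick a type, extend, and invoke the relevant clause of Theorem~\ref{thm:main_intro} at each transition).

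There is, however, a genuine gap. You assume that on each maximal family (each ``edge'' of your graph $G$) one can fix a \emph{single} homology type $(p,q)$ transverse to the flow on every torus of that family. By Lemma~\ref{lem:pqtransversecurve} this fails exactly on tori whose flow direction is parallel to $(p,q)$, and nothing prevents the slope $\rho(s)$ from sweeping through every direction of $\mathbb{RP}^1$ along a single family---this is precisely Proposition~\ref{pro:boundary_exist}. In that case no label works for the whole edge. The paper handles this explicitly: when the chosen $(p,q)$ is about to become tangent, one stops at a nearby rational torus of type $(P,Q)$ and uses part~(3) of Theorem~\ref{thm:main_intro} to jump to $(p,q)+k(P,Q)$, at the price of an additional boundary orbit \emph{in the interior of the family}. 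Your graph has no interior vertices, so this mechanism is absent from your scheme. The fix is straightforward---subdivide each edge at finitely many rational tori so that on every sub-edge the range of $\rho$ misses some rational direction---but it must be stated.

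A smaller point: the rescaling-by-$N$ argument for the cycle constraints is not convincing. Multiplying all labels by $N$ multiplies any additive mismatch around a cycle by $N$ as well, so it resolves nothing. In the paper's approach this issue never arises, because one is always free to insert an extra boundary orbit via part~(3) to reset the label; economy in the number of boundary orbits is not the goal here, mere existence is.
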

\begin{proof}
	We begin with any invariant torus and choose a possible homology type for a transverse curve on it. We then extend this transverse curve continuously to nearby tori, ensuring that its homology type remains consistent across the family. If the chosen homology type fails to remain transverse across the entire family of tori, we apply part $(3)$ of Theorem \ref{thm:main_intro} to introduce an additional boundary orbit of the Birkhoff section when necessary. When encountering a broken torus, we use parts $(4)$ and $(5)$ of the theorem to determine the homology type of transverse curves on the nearby tori. If a family of tori terminates at a periodic orbit, $(1)$ and $(2)$ of the theorem determine whether this periodic orbit serves as a boundary orbit of the Birkhoff section.
	
	By our assumption, this process concludes in a finite number of steps. Thus, we successfully construct a Birkhoff section on $M$.
\end{proof}

Theorem \ref{thm:main_intro} can be applied to construct Birkhoff sections when $M$ admits a regular invariant toric foliation. Usually, the Birkhoff section can have non-zero genus. In \cite{Dehornoy2022}, when $M$ is a rational homology three-sphere, the authors provide a formula to compute the Euler characteristic of a transverse section in terms of the linking number and self-linking number of its boundary orbits. The genus of the section can then be derived from the Euler characteristic. In our setting, since the construction is explicit, both the Euler characteristic and genus of the Birkhoff section can be computed directly. See section \ref{sec:application} for details.

There can be various Birkhoff sections for a flow on $M$. In fact, by the resolution procedure as showed in \cite{Colin2023}, one can produce different Birkhoff sections. The question is to find the "simplest" Birkhoff section a flow can have. The simplest case is a disk-like global surface, which is guaranteed in certain situations \cite{Hofer1998,HS2011}. On the other hand, there exist Reeb flows on $S^3$ for which the Birkhoff section must have many boundary orbits. It is not easy to determine the lower bound for the number of boundary orbits among all possible Birkhoff sections.

In our setting, due to Corollary \ref{cor:necefordisk}, the flow is very restricted when it admits a Birkhoff section with a unique boundary orbit. This allows us to easily determine whether a periodic orbit bounds a Birkhoff section. In the application section, we provide a simple criterion for the existence of a Birkhoff section with a unique boundary orbit, both for the boundaries of toric domains and for separable Hamiltonian systems. Note that we do not need either the flow to be dynamically convex or a priori the topology of $M$. 

Obviously, the existence of a disk-like global surface of section implies $M$ to be isomorphic to $S^3$. In both cases discussed in the application section, if the flow admits a Birkhoff section with a unique boundary orbit, it actually admits a disk-like global surface of section. We do not know if this phenomenon is general. It is an interesting problem to find a Reeb flow on $S^3$ such that it admits a Birkhoff section with a unique boundary orbit but does not admit a disk-like global surface of section.

\subsection{Applications}
Recall that the flow on the boundary of a toric domain, as well as the flow on a regular energy level set of separable Hamiltonian $H=H_1(x_1,y_1)+H_2(x_2,y_2)$, admits a toric foliation. Our construction can be applied to such manifolds to derive some characterizations of their Birkhoff sections. 

\subsubsection{Boundaries of toric domains}
The definition of a toric domian is given in Definition \ref{def:toric} through the map
\[\mu:\mathbb{C}^2\rightarrow \mathbb{R}^2, \quad (z_1,z_2)\rightarrow (\pi|z_1|^2,\pi|z_2|^2).\]

Let $f: [0,1]\rightarrow \mathbb{R}^2$ be a smooth curve in first quadrant such that $f(0)=(0,a)$ and $f(1)=(b,0)$ for some $a,b>0$. Denote $M_f$ as the $3$-manifold given by $\mu^{-1}(f)$, which corresponds to the boundary of a toric domain. $M_f$ consists of a single family of invariant tori and carries a standard flow. For each $s\in (0,1)$, the preimage $\mu^{-1}(f(s))$ is an invariant torus. We prove the following theorem regarding disk-like or annulus-like Birkhoff sections in $M_f$.

\begin{theorem}[Theorem \ref{thm:birktoric}]\label{thm:birktoric_intro}
	The flow on $M_f$ has the following properties:
	\begin{itemize}
		\item[(1)] $\mu^{-1}(f(0))$ bounds a disk-like Birkhoff section if and only if $(0,1)$ is not a normal vector at $f(s)$ for all $s\in(0,1)$; 
		\item[(2)] $\mu^{-1}(f(1))$ bounds a disk-like Birkhoff section if and only if $(1,0)$ is not a normal vector at $f(s)$ for all $s\in(0,1)$; 
		\item[(3)] $\mu^{-1}(f(0))$ and $\mu^{-1}(f(1))$ bound an annulus-like Birkhoff section if and only if there exist $p\ne 0$ and $q\ne0$ such that $(p,q)$ is not a normal vector at $f(s)$ for all $s\in(0,1)$;
		\item[(4)] a $(p,q)$-type periodic orbit on a rational invariant torus $\mu^{-1}(f(s_0))(0<s_0<1)$ bounds a Birkhoff section if and only if $(1,0)$ is not a normal vector to $f(s)$ for all $s\in(0,s_0]$ and $(0,1)$ is not a normal vector to $f(s)$ for all $s\in[s_0,1)$. Moreover, if the boundary is $k$-fold covered, the genus of the Birkhoff section is given by $\frac{k(p-1)(q-1)}{2}$ .
	\end{itemize}
\end{theorem}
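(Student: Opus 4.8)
The plan is to deduce all four items from Theorem~\ref{thm:main_intro} and Lemma~\ref{lem:invarianthomology} once the toric picture of $M_f$ is in place, so that the only genuine computation is an Euler-characteristic count in item~(4). First I would fix notation. Write $T_s:=\mu^{-1}(f(s))$ for $s\in(0,1)$; these are the invariant tori of $M_f$, and the family degenerates as $s\to0$ to the periodic orbit $P_0:=\mu^{-1}(f(0))=\{z_1=0\}$ and as $s\to1$ to $P_1:=\mu^{-1}(f(1))=\{z_2=0\}$. In the phase coordinates $(\theta_1,\theta_2)$ (arguments of $z_1,z_2$, with $\theta_1$ the horizontal generator), $P_0$ is a vertical core: the $(0,1)$-curve is isotopic to $P_0$ and the $(1,0)$-curve is the meridian of a tubular neighbourhood of $P_0$; symmetrically $P_1$ is a horizontal core. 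The standard computation of the characteristic direction on the boundary of a toric domain shows that the flow on $T_s$ is linear in the direction of the outward normal $n(s)$ to $f$ at $f(s)$; hence a $(p,q)$-curve on $T_s$ is positively transverse to the flow if and only if $(p,q)\not\parallel n(s)$, i.e.\ if and only if $(p,q)$ is not a normal vector to $f$ at $f(s)$.

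For item~(1): suppose $S$ is a disk-like Birkhoff section with $\partial S$ carried by $P_0$. By Lemma~\ref{lem:invarianthomology} the curves $S\cap T_s$ have a constant homology type on $(0,1)$; since $\partial S$ meets $P_0$ but not $P_1$, parts~(1)--(2) of Theorem~\ref{thm:main_intro} force this type to have vanishing first coordinate and nonvanishing second, and an Euler-characteristic count (the only non-collar piece is a single meridian disk of the solid torus around $P_1$) pins it to $(0,\pm1)$. A $(0,\pm1)$-curve is transverse to the flow throughout $(0,1)$ exactly when $(0,1)$ is a normal vector at no point $f(s)$, $s\in(0,1)$; this gives the "only if" direction. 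Conversely, under that hypothesis the union of the $(0,\pm1)$-curves, together with $P_0$ at $s\to0$ (where the curve becomes the core) and a meridian disk of the solid torus around $P_1$ at $s\to1$, is a disk-like Birkhoff section bounded by $P_0$; interior transversality is built in, and every orbit is met because the flow direction is vertical at no interior point. Item~(2) is the mirror statement under $P_0\leftrightarrow P_1$ and $(0,1)\leftrightarrow(1,0)$.

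For item~(3): the same constancy argument shows that an annulus-like $S$ whose boundary meets both $P_0$ and $P_1$ must intersect each $T_s$ in a $(p,q)$-curve (primitive, say) with $p\ne0$ and $q\ne0$ by Theorem~\ref{thm:main_intro}(1)--(2), and this curve must be transverse on all of $(0,1)$, i.e.\ $(p,q)$ is a normal vector at no $f(s)$, $s\in(0,1)$; conversely, such a $(p,q)$ yields an annulus-like Birkhoff section, the union of the $(p,q)$-curves being an open annulus that compactifies at the two ends to a $|q|$-fold cover of $P_0$ and a $|p|$-fold cover of $P_1$. For item~(4): a closed orbit of type $(p,q)$ lives on $T_{s_0}$ precisely when $n(s_0)\parallel(p,q)$, i.e.\ $(p,q)$ is a normal vector at $f(s_0)$; we may take $p,q>0$ (the outward normal of $f$ lies in the first quadrant) and $\gcd(p,q)=1$. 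If this orbit, $|k|$-covered, bounds a Birkhoff section $S$ with no boundary on any other $T_s$ nor on $P_0,P_1$, then Lemma~\ref{lem:invarianthomology} and Theorem~\ref{thm:main_intro}(1)--(2) force the cross-section type to be $(p^-,0)$ with $p^-\ne0$ on $(0,s_0)$ and $(0,q^+)$ with $q^+\ne0$ on $(s_0,1)$; transversality on these intervals reads that $(1,0)$ is not a normal vector on $(0,s_0)$ and $(0,1)$ is not a normal vector on $(s_0,1)$, and since $p,q\ne0$ these extend to $(0,s_0]$ and $[s_0,1)$. Conversely, given those conditions, take the cross-section type to be $(-kp,0)$ on $(0,s_0)$ and $(0,kq)$ on $(s_0,1)$: both are transverse on their intervals, neither contributes a boundary orbit at $P_0$ or $P_1$ by parts~(1)--(2), and at $T_{s_0}$ (which is of $(p,q)$-type) part~(3) of Theorem~\ref{thm:main_intro} applies since $(0,kq)$ is not a multiple of $(p,q)$ and $(0,kq)-(-kp,0)=k(p,q)$; hence the assembled surface is a Birkhoff section whose only boundary is a $|k|$-fold cover of the $(p,q)$-orbit.

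It remains to compute the genus in item~(4). The section $S$ just built decomposes into $kp$ meridian disks near $P_0$, $kq$ meridian disks near $P_1$, and a transition cobordism in a neighbourhood of $T_{s_0}$ produced by the resolution at that rational torus; evaluating $\chi(S)$ from this decomposition --- or, since $M_f\cong S^3$ with $T_{s_0}$ a Heegaard torus and the orbit the $(p,q)$-torus knot, invoking the known genus of its fiber surface and its $k$-fold analogue --- gives $\chi(S)=1-k(p-1)(q-1)$, hence genus $\tfrac{k(p-1)(q-1)}{2}$. The two steps needing real care are: in the "only if" parts, verifying that \emph{every} Birkhoff section of the prescribed topological type has exactly the cross-section homology types claimed --- this rests on $M_f$ possessing no degenerate tori other than $P_0,P_1$, so that Lemma~\ref{lem:invarianthomology} applies without interruption on each maximal subinterval, together with the requirement that every orbit be met (clause~(3) of the definition) near $P_0,P_1$ and near $T_{s_0}$ --- and, in item~(4), the Euler-characteristic bookkeeping for the transition region at $T_{s_0}$, which is standard in the resolution calculus of \cite{Colin2023} but is the genuine computation in the proof. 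I expect this last point, the transition-region count, to be the main obstacle.
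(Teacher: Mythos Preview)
Your proposal is correct and follows essentially the same route as the paper: reduce items (1)--(3) to the constancy of the cross-section homology class (Lemma~\ref{lem:invarianthomology}) together with parts (1)--(2) of Theorem~\ref{thm:main_intro}, and handle item (4) via part (3) of Theorem~\ref{thm:main_intro} after forcing the cross-section classes on $(0,s_0)$ and $(s_0,1)$ to be $(kp,0)$ and $(0,-kq)$ (your signs differ from the paper's but this is immaterial). Two minor remarks: the Euler-characteristic pinning to $(0,\pm1)$ in item (1) is not needed for the ``only if'' direction, since $(0,q)$ is transverse on all of $(0,1)$ iff $(0,1)$ is, regardless of $|q|$; and your assumption ``$p,q>0$ (the outward normal lies in the first quadrant)'' presupposes monotonicity, which the theorem does not, though the paper is equally casual about this and the genus formula should be read with $p,q>0$.

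The one substantive difference is the genus computation in (4), which you correctly flag as the obstacle. The paper avoids the black-box appeal to torus-knot fiber genera or to the resolution calculus and instead gives an elementary count, for $k=1$, by cutting $S$ at the non-transversal torus $T_{s_0}$ rather than near $P_0$ and $P_1$. The limit curve $S\cap T_{s_0}$ consists of the $(p,q)$-boundary orbit together with a transversal arc; the piece $S^-$ (over $[0,s_0]$) is $p$ meridian disks identified along the points where this transversal arc meets the boundary orbit, and likewise $S^+$ is $q$ disks with analogous identifications. There are $pq-1$ such interior identification points in total, so $\chi(S^-)+\chi(S^+)=p+q-(pq-1)$; since $S^-\cap S^+$ is the transversal arc (Euler characteristic $1$), inclusion--exclusion gives $\chi(S)=p+q-pq$ and hence genus $(p-1)(q-1)/2$. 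This is more direct than your proposed decomposition into $kp+kq$ end-disks plus an unspecified cobordism, because all of the topology is concentrated in the explicit limit picture on $T_{s_0}$.
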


For global surface of sections in $M_f$, we have a similar result, which is stated in Theorem~\ref{thm:globaltoric}. The detailed statement is omitted here for brevity.

\begin{corollary}
	The boundary of a weakly convex toric domain admits a disk-like global surface of section.
\end{corollary}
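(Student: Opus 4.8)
This is a short consequence of Theorem~\ref{thm:birktoric_intro}. The two points that need attention are (i) reading off the relevant normal-direction hypothesis from the notion of weak convexity, and (ii) upgrading the \emph{Birkhoff section} produced there to a genuine \emph{global surface of section}, i.e.\ verifying embeddedness.

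The plan is to first unwind the definition of a weakly convex toric domain to the statement that, for $s\in(0,1)$, the outward normal to the defining curve $f$ at $f(s)$ is never the vertical vector $(0,1)$ --- for instance because $f$ has no horizontal tangent line in its interior. (If instead weak convexity excludes the horizontal direction $(1,0)$ as an interior normal, one runs the mirror argument with the two $\mathbb{C}$-factors exchanged, using part~(2) of Theorem~\ref{thm:birktoric_intro} and the orbit $\mu^{-1}(f(1))$ in place of $\mu^{-1}(f(0))$; in either reading, weak convexity forbids at least one coordinate direction as an interior normal, which is all that is needed.) Granting this, Theorem~\ref{thm:birktoric_intro}(1) immediately produces a disk-like Birkhoff section $S$ with boundary orbit $\mu^{-1}(f(0))$.

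It then remains to check that $S$ is embedded, and for this I would examine the construction underlying Theorem~\ref{thm:birktoric_intro}. There $S$ is assembled from the orbit $\mu^{-1}(f(0))$, a smooth one-parameter family of simple closed curves $\gamma_s\subset\mu^{-1}(f(s))$ ($s\in(0,1)$) that are transverse to the flow and all carry one fixed homology class, and a single cap point on $\mu^{-1}(f(1))$. By Lemma~\ref{lem:invarianthomology} that class is constant along the family; Theorem~\ref{thm:main_intro}(2) forces one of its coordinates to vanish so that the family closes up smoothly at $s=1$ without an extra boundary orbit, and then Theorem~\ref{thm:main_intro}(1) forces the other coordinate to be $\pm1$, so the class is primitive (namely $(0,\pm1)$, or $(\pm1,0)$ in the mirror case) and $\mu^{-1}(f(0))$ is covered exactly once by $\partial S$. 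Since the tori $\mu^{-1}(f(s))$ are pairwise disjoint and the $\gamma_s$ are simple, these slices are pairwise disjoint embedded circles; together with the simply-covered boundary orbit and the single cap point, this exhibits $S$ as an embedded disk transverse to the flow away from $\partial S$, i.e.\ a disk-like global surface of section.

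I expect the main obstacle to be conceptual rather than computational: Theorem~\ref{thm:birktoric_intro}(1) as stated yields only a Birkhoff section, which a priori is merely immersed, so the corollary is not literally a quotation of it. The gap is closed by the explicitness of the construction --- a disk built from pairwise disjoint simple transverse curves that limit onto a simply-covered periodic orbit at one end and onto a single point at the other is automatically embedded --- and it is here that the primitivity of the homology class (forced by parts (1)--(2) of Theorem~\ref{thm:main_intro}) is essential, since it is what makes the slices simple and the boundary simply covered.
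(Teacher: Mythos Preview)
Your argument is correct and aligns with the paper's approach: the paper places the corollary immediately after announcing Theorem~\ref{thm:globaltoric}, whose clauses (1)--(2) carry the \emph{same} hypotheses as Theorem~\ref{thm:birktoric}(1)--(2), so the disk-like Birkhoff section produced there is automatically a global surface of section, and weak convexity supplies that hypothesis. Your unpacking of why the homology class must be primitive is exactly the content behind that coincidence; the one phrasing to sharpen is that Theorem~\ref{thm:main_intro}(1) does not itself force $|q|=1$ --- rather, a $(0,q)$-curve has $|q|$ disjoint components on each torus by Lemma~\ref{lem:mntype}, so the \emph{disk-like} (hence connected) requirement forces $|q|=1$, and then Theorem~\ref{thm:main_intro}(1) confirms the boundary is simply covered.
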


\begin{remark}
	Theorem \ref{thm:globaltoric} provides necessary and sufficient conditions for $M_f$ to admit a disk-like Birkhoff section, under assumptions weaker than the toric domain being weakly convex. Notably, in \cite{Hutchings2022} it was shown that a toric domain is dynamical convex if and only if $f$ satisfies a strictly monotone condition. By \cite{Hofer2003}, it admits a disk-like global surface of section. As a result of Theorem \ref{thm:birktoric_intro}, we conclude that if $M_f$ is dynamical convex, then any periodic orbit bounds a Birkhoff section.
\end{remark}

By Proposition \ref{pro:boundary_exist}, we provide a much simpler construction of the following result from \cite{vanKoert2022}.
\begin{theorem}[[Theorem 1.1 in \cite{vanKoert2022}]]
	For any integer $k>0$, there exist a star-shaped $M_f$ with a flow equivalent to a Reeb flow on $S^3$, such that any Birkhoff section for the flow must have at least $k$ distinct boundary orbits.
\end{theorem}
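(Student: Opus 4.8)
\emph{The plan} is to reduce, via Lemma~\ref{lem:invarianthomology} and Theorem~\ref{thm:main_intro}, the existence problem for Birkhoff sections in $M_f$ to a one--dimensional combinatorial problem about the single family of invariant tori, and then to build the profile curve $f$ so that this obstruction cannot be met with fewer than $k$ boundary orbits. Since $M_f=\mu^{-1}(f)$ is a star--shaped hypersurface in $\mathbb{C}^2$, it is diffeomorphic to $S^3$ and its flow is a reparametrization of the Reeb flow of $\lambda_0|_{M_f}$, so it suffices to treat it as a Reeb flow on $S^3$. For $s\in(0,1)$ write $T_s=\mu^{-1}(f(s))$; the flow on $T_s$ is linear with direction $v(s)\in S^1$ obtained by rotating the outward normal of $f$ at $f(s)$ by $\pi/2$, and $T_s$ is rational exactly when $v(s)$ is a rational direction, in which case its periodic orbits lie in the primitive class $[v(s)]$. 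Given a Birkhoff section $S$, Lemma~\ref{lem:invarianthomology} and Theorem~\ref{thm:main_intro} show that the homology classes $w(s)=(p(s),q(s))$ of the oriented curves $S\cap T_s$ form a piecewise constant function of $s$ with $w(s)\wedge v(s)>0$ for all $s$, whose discontinuities — together with at most the two invariant circles over $f(0)$ and $f(1)$ — are precisely the boundary orbits of $S$, distinct interior discontinuities occurring on distinct (hence disjoint) tori and so on distinct periodic orbits. Thus it is enough to construct $f$ for which every admissible piecewise constant $w$ has at least $k$ interior jumps.

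\emph{The obstruction (this is essentially Proposition~\ref{pro:boundary_exist}).} For a fixed class $w$ the set of directions $u$ with $w\wedge u>0$ is an open half--circle of angular length $\pi$. Hence if $I\subset(0,1)$ is a subinterval on which $v$ sweeps a connected arc of $S^1$ that is not contained in any closed half--circle (for instance an arc of angular length $>\pi$), then no single value of $w$ is admissible on all of $I$, so $w$ must jump somewhere in $I$, producing a boundary orbit on a torus inside $I$. Consequently, if $(0,1)$ contains pairwise disjoint subintervals $I_1<\dots<I_k$ on each of which $v$ sweeps strictly more than a half--circle, then every Birkhoff section of $M_f$ has at least $k$ distinct boundary orbits.

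\emph{Construction of $f$.} Take $f$ to be a polar graph $f(\varphi)=\rho(\varphi)(\cos\varphi,\sin\varphi)$, $\varphi\in[0,\pi/2]$, with $\rho$ smooth and positive, $\rho(0)=b$, $\rho(\pi/2)=a$, and appropriate one--sided derivatives at the endpoints so that $M_f$ is a smooth star--shaped hypersurface; the region bounded by $f$ and the two coordinate segments is then automatically star--shaped about the origin, so $M_f\cong S^3$. From $f'(\varphi)=\rho'(\varphi)(\cos\varphi,\sin\varphi)+\rho(\varphi)(-\sin\varphi,\cos\varphi)$ one gets $\arg f'(\varphi)=\varphi+\theta(\varphi)$ with $\theta(\varphi)=\operatorname{arccot}\!\bigl(\rho'(\varphi)/\rho(\varphi)\bigr)\in(0,\pi)$, and $v(\varphi)=\arg f'(\varphi)-\pi/2$. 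Now choose $\rho$ to have $k$ consecutive sharp spikes alternating with sharp valleys: on an interval $I_j$ spanning one full oscillation, $\rho'/\rho$ runs from a very large positive value down to a very large negative value and back, so $\theta$ sweeps almost all of $(0,\pi)$ while $\varphi$ still advances by a positive amount $\delta_j$; hence $v$ sweeps an arc of length close to $\pi+\delta_j>\pi$ on $I_j$. Making the spikes and valleys sharp enough makes this inequality rigorous, and the $k$ disjoint intervals $I_j$ are easily fitted inside $(0,\pi/2)$. Setting the number of oscillations to $k$ (or slightly more, to absorb any boundary orbits coming from the two invariant circles), Steps above give a star--shaped $M_f$ whose flow is equivalent to a Reeb flow on $S^3$ and every Birkhoff section of which has at least $k$ distinct boundary orbits.

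\emph{Main obstacle.} The technical heart is the quantitative part of the construction: arranging $\rho$ so that within each oscillation $\rho'/\rho$ attains both a very large positive and a very large negative value — forcing $\theta$, and hence $v$, to sweep strictly more than a half--circle once the small $\varphi$--drift across $I_j$ is accounted for — while keeping $\rho$ smooth and positive, keeping the angular budget $(0,\pi/2)$ sufficient for $k$ oscillations, and matching the endpoint corner conditions so that $M_f$ remains a smooth star--shaped $3$--sphere. Coupled to this is the need to re--verify at each forced jump the precise hypotheses of Theorem~\ref{thm:main_intro}(3) — that the jump vector is a nonzero multiple of the rational direction $v$ takes there, and that the incoming class is not a multiple of it — so that each jump genuinely yields a boundary orbit; this is automatic, since a jump can occur only where the incoming class is transverse to $v$, but it must be spelled out.
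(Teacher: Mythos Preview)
Your reduction is exactly the paper's: both arguments rest on Proposition~\ref{pro:boundary_exist} (equivalently your half--circle obstruction) applied on $k$ disjoint subintervals of $(0,1)$, so that each interval forces its own boundary orbit, and the Reeb interpretation on $S^3$ comes from Lemma~\ref{lem:hofer1993} (star--shapedness of $M_f$). The logical skeleton --- piecewise constant homology class $w(s)$ with $w(s)\wedge v(s)>0$, jumps of $w$ giving boundary orbits on distinct tori --- is identical.

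Where you diverge is in the explicit profile $f$. The paper's construction is much cleaner than your spiky polar graph: for each of $k$ non--overlapping isosceles triangles in the first quadrant with apex at the origin, take the semicircle whose diameter is the triangle's base. On such a semicircle the outward normal sweeps a full half--turn, so the flow direction covers all of $\mathbb{RP}^1$ and Proposition~\ref{pro:boundary_exist} applies directly; star--shapedness about the origin is immediate from the isosceles geometry, and one simply connects the $k$ semicircles by a smooth star--shaped arc. This sidesteps every item in your ``Main obstacle'' paragraph --- no estimates on $\rho'/\rho$, no angular budget issue, no delicate endpoint matching. Your polar--graph construction is correct in principle (the computation $v(\varphi)=\varphi+\operatorname{arccot}(\rho'/\rho)-\pi/2$ does give an arc of length $>\pi$ when $\rho'/\rho$ attains large values of both signs on $I_j$), but it is needlessly laborious. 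One small slip: in your first paragraph you say $v(s)$ is the normal rotated by $\pi/2$, while by the paper's Lemma in \S\ref{sec:application} the flow direction \emph{is} the normal; your later formula $v=\arg f'-\pi/2$ is the normal, so only the verbal description is off.
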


Combining Theorem \ref{thm:birktoric_intro} with the result of Edtair \cite{Edtmair2024}, we obtain an alternative proof of the following result by Gutt and Hucthings.
\begin{theorem}[Theorem 1.7 in \cite{Hutchings2022}]
	If $X_\Omega$ is a monotone toric domain in $\mathbb{R}^4$, then $c_B(X_\Omega)=c_Z(X_\Omega)$.
\end{theorem}

Here $c_B$ and $c_Z$ are normalized symplectic capacities, defined as follows.
\begin{equation}
	\begin{split}
		c_B(X_\Omega)=sup \Big\{r\big|B(r)\stackrel{s}\hookrightarrow X_\Omega \Big\};\\
		c_Z(X_\Omega)=inf \Big\{r\big|X_\Omega \stackrel{s}\hookrightarrow Z(r) \Big\}.
	\end{split}
\end{equation}
where $"\stackrel{s}\hookrightarrow"$ represent a symplectic embedding.

\subsubsection{Separable Hamiltonian}
A Hamiltonian system is said to be \textsl{separable} if the Hamiltonian function can expressed as the sum of several independent two-dimensional Hamiltonian functions, i.e. $H=\sum_{i=1}^{n}H_i(x_i,y_i)$. In this paper, we focus on the case when $n=2$, thus each regular energy level set of $H$ will be a $3$-manifold with a non-vanishing Hamiltonian vector field. It is obvious that every regular energy level set of a separable Hamiltonian $H=H_1(x_1,y_1)+H_2(x_2,y_2)$ admits a toric invariant foliation. Consequently, our results can be directly applied to study Birkhoff sections for such flows.

We assume the following conditions: (i) both $H_1$ and $H_2$ has a unique global minimum point with minimum value being $0$; (ii) all critical points of $H_1$ and $H_2$ are isolated. Given a regular level set $H=c$, the resulting three-manifold is foliated by the product space $\{H_1=s\}\times \{H_2=c-s\}$ with $s\in [0,c]$.

It is straightforward to observe that there exists a consistent way to define the homology of transverse curves across all invariant tori in $H=c$. Furthermore, the vector of the linear flow on each torus has both components positive. A direct result of our construction is that the flow on any regular level set of $H$ admits a Birkhoff section. 

We also provide a necessary and sufficient condition for the existence of a disk-like global surface of section, which can be easily verified.

\begin{theorem}[Theorem \ref{thm:oneboundaryforH}]\label{thm:oneboundaryforH_intro}
	Let $H=c$ be a regular level set. Then the following are equivalent:
	\begin{itemize}
		\item[(i)] $H=c$ admits a disk-like global surface of section;
		\item[(ii)] $H=c$ admits a Birkhoff section with a unique boundary orbit;
		\item[(iii)]  Both sublevel sets $H_1^c$ and $H_2^c$ are isomorphic to a disk. And there exists a $c_0\in (0,c)$ such that there are no critical points of $H_1$ for $H_1>c_0$ and no critical points of $H_2$ for $H_2>c-c_0$.
	\end{itemize}
\end{theorem}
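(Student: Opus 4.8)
The plan is to prove the three equivalences in a cycle, $(i)\Rightarrow(ii)\Rightarrow(iii)\Rightarrow(i)$, with the two easy implications first and the characterization in $(iii)$ as the genuine content. The implication $(i)\Rightarrow(ii)$ is immediate: a disk-like global surface of section is in particular a Birkhoff section, and its boundary is a single unknotted periodic orbit, so it has a unique boundary orbit. For $(ii)\Rightarrow(iii)$, the idea is to run the general structural result: on $H=c$ the invariant tori form a single family parametrized by $s\in[0,c]$, where $\{H_1=s\}\times\{H_2=c-s\}$ degenerates at the two ends $s=0$ and $s=c$, and possibly at interior values of $s$ where $H_1$ or $H_2$ has a critical value. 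The flow vector on the torus at parameter $s$ has both components strictly positive (since both $H_i$ increase away from their minima and all critical points are isolated), so for a transverse curve of type $(p,q)$ we need $p,q$ not both of a sign that makes $(p,q)$ normal — concretely, transversality at a rational torus of type $(P,Q)$ with $P,Q>0$ fails exactly when $(p,q)$ is a positive multiple of $(P,Q)$. Now suppose a Birkhoff section $S$ has a unique boundary orbit. By parts $(1)$–$(5)$ of Theorem~\ref{thm:main_intro}, each degeneration of the family that is \emph{not} accommodated by the homology type of the nearby transverse curves forces an extra boundary orbit; hence all but (at most) one of the ends and all but (at most) one interior degeneration must be compatible. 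I would argue that the two ends $s=0,s=c$ are always genuine degenerations producing a vertical (resp.\ horizontal) invariant circle unless the corresponding sublevel set is a disk, so if $S$ has only one boundary orbit, at least one of $H_1^c$, $H_2^c$ must be a disk; then a parity/linking argument (the single boundary orbit must link all other orbits) forces \emph{both} $H_1^c$ and $H_2^c$ to be disks, and forces the absence of interior critical values except a single ``transition'' value $c_0$, which is precisely the statement in $(iii)$.

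For $(iii)\Rightarrow(i)$, the plan is an explicit construction. Assuming $H_1^c$ and $H_2^c$ are disks and all critical points of $H_1$ lie in $\{H_1\le c_0\}$ and all critical points of $H_2$ lie in $\{H_2\le c-c_0\}$, the manifold $H=c$ is foliated by tori for $s\in(0,c)$, with no interior broken tori for $s$ in the range where neither $H_1$ nor $H_2$ has a critical value; the only possible critical values occur for $s\le c_0$ (from $H_1$) and for $s\ge c_0$ (from $H_2$), and by the hypothesis these are the ``safe'' ones. The idea is to choose the transverse curves to be of type $(1,0)$ near $s=c$ (so that they close up as the $\{H_1=s\}$ factor grows — consistent because $H_1^c$ is a disk, giving a vertical circle at $s=c$) and of type $(0,1)$ near $s=0$; since the ends are disks, these are compatible by parts $(1)$ and $(2)$ of Theorem~\ref{thm:main_intro} with \emph{no} boundary orbit there. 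Then I need one transition from type $(0,1)$ to type $(1,0)$ across some rational torus of type $(1,1)$ (which exists because the rotation number varies continuously and both components are positive), and part $(3)$ of the theorem with $(p^+,q^+)-(p^-,q^-)=(1,0)-(0,1)=(-1)(1,1)=k(P,Q)$, $k=-1$, tells us this produces exactly one boundary orbit, a simple ($|k|=1$) cover of the $(1,1)$ periodic orbit. The resulting Birkhoff section has a single boundary orbit; moreover by the genus formula (as in Theorem~\ref{thm:birktoric_intro}(4) with $p=q=1$, $k=1$) its genus is $\frac{(1-1)(1-1)}{2}=0$, so it is a disk, and being embeddable it is a disk-like global surface of section. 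One must also check the broken tori encountered for $s<c_0$ and $s>c_0$ do not obstruct this: near $s=0$ the curve type is $(0,1)$ and each sub-neighborhood degeneration involves only the $H_1$ factor, so condition $(4)$ or $(5)$ of Theorem~\ref{thm:main_intro} is satisfied with the $q$-components matching; symmetrically near $s=c$.

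The main obstacle I expect is the direction $(ii)\Rightarrow(iii)$, specifically ruling out the various ways a Birkhoff section could have one boundary orbit \emph{without} the clean picture of $(iii)$ — e.g.\ a scenario where an interior broken torus absorbs the ``only'' boundary orbit while both ends are handled for free, but the sublevel sets are not disks, or where there are several interior critical values arranged so that their contributions cancel in homology. The key technical point is that the ends $s=0$ and $s=c$ collapse a \emph{vertical} or \emph{horizontal} circle only when the respective $H_i$-sublevel set fails to be a disk (when it is a disk, the collapsing circle is a \emph{meridian} that bounds, and the $(1,0)$ or $(0,1)$ curve extends without a boundary orbit); so I must carefully analyze the local model near a minimum (sublevel set a disk, collapsing to a point, giving an unknotted periodic orbit that can be a removable ``fake'' boundary) versus near a higher critical point (collapsing a nontrivial circle, forcing a genuine broken torus). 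Establishing this local dichotomy rigorously — essentially a Morse-theoretic description of how $\mu^{-1}$ of the boundary of the level set looks near each critical value of $H_i$ — is the heart of the argument; once it is in place, the bookkeeping via Theorem~\ref{thm:main_intro}(1)–(5) and the linking obstruction for a single-boundary Birkhoff section closes the cycle.
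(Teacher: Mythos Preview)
Your cycle has the right shape, but both nontrivial implications contain concrete errors.

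In $(iii)\Rightarrow(i)$: the arithmetic $(1,0)-(0,1)=(-1)(1,1)$ is wrong; the difference is $(1,-1)$, which is not a multiple of $(1,1)$, so part $(3)$ of Theorem~\ref{thm:main_intro} does not produce a single boundary orbit at a $(1,1)$ torus. Worse, the types $(1,0)$ and $(0,1)$ cannot both be \emph{positively} transverse to a flow whose direction lies in the open first quadrant --- the signed crossings have opposite signs --- so your section would reverse orientation across the transition. In the paper's convention the end $s=0$ collapses the $H_1$-factor, leaving a \emph{vertical} invariant circle, so by part $(1)$ one needs type $(p,0)$ there and symmetrically $(0,-q)$ near $s=c$; then $(p,0)-(0,-q)=(p,q)$ matches the rational torus with $k=1$. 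Finally, you assert a $(1,1)$ torus always exists ``because the rotation number varies continuously,'' but for an irrational ellipsoid (no interior critical points, which $(iii)$ allows) the rotation number is constant and need not equal $1$; the paper treats that case separately, and otherwise uses that near a hyperbolic critical point the rotation number tends to $0$ or $\infty$ to locate a $(1,q)$ or $(p,1)$ torus.

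In $(ii)\Rightarrow(iii)$: your premise that the invariant tori form a single family parametrized by $s\in[0,c]$ is false in general. The set $\{H_1=s\}\times\{H_2=c-s\}$ is typically a disjoint union of several tori (whenever a level curve of $H_1$ or $H_2$ is disconnected), so the foliation branches into a tree of families, not an interval. The paper's argument isolates those families whose two limiting ends lie over a critical point of $H_1$ and a critical point of $H_2$ respectively, and shows (Lemma~\ref{lem:uniquefamily}, using Corollary~\ref{cor:necefordisk} and Lemma~\ref{lem:nohyperboundary}) that \emph{each} such connecting family forces its own distinct boundary orbit of $S$; a unique boundary orbit then forces a unique connecting family, and Lemma~\ref{lem:c} converts that into both sublevel sets being disks together with the existence of $c_0$. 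Your ``parity/linking'' sketch does not supply this mechanism and does not rule out the cancellation scenario you yourself flag.
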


Clearly, the topology of the level set $H=c$ depends on $c$. So does the possible Birkhoff section in $H=c$. In the final section of this paper, we present an example(Example \ref{exm:5.10}) that illustrates an interesting phenomenon: as $c$ increases, the possible Birkhoff section in $H=c$ first becomes more complex and then simplifies again.

The paper is organized as follows. In Section \ref{sec:2}, we study the possible transverse curves on a invariant torus and within a family of tori. Section \ref{sec:3} focuses on the transverse curves near a broken torus. In Section \ref{sec:4}, we provide the necessary and sufficient conditions for existence of a boundary orbit of the Birkhoff section. Our main result Theorem \ref{thm:main_intro}, serves as a summary of the findings in Section \ref{sec:4}. Finally, in Section \ref{sec:application}, we apply our construction to the boundaries of toric domains and separable Hamiltonian systems, providing some criterion results and examples.

\section{Transverse curves on a family of invariant tori}\label{sec:2}
\begin{definition}
	For an invariant torus $T=S^1\times S^1$, let $g_1$ and $g_2$ be the two generators of the first homology of $T$. An oriented closed curve in $T$ is called $(p,q) $-type if its homology class is $(p,q)\in \mathbb{Z}\oplus\mathbb{Z}$.
\end{definition}
Denote $\mathbf{Ins}(C_1,C_2)$ as intersection number of two closed curves $C_1$ and $C_2$ on $T$, which is a topological invariant depending on the homology type of $C_1$ and $C_2$. For any closed curve $C\subset T$, it's clear that $C$ is $(p,q)$-type if and only if $\mathbf{Ins}(C,g_2)=p$ and $\mathbf{Ins}(g_1,C)=q$. 

For convenience, we usually regard $T$ as a rectangle with the opposite edges identified, a representation that will be used frequently throughout the paper. The vertical and horizontal lines correspond to the two generators $g_1$ and $g_2$. We also assume that the flow on any invariant torus is a uniformly linear motion.

The following two lemmas can be directly derived from the definition of transverse curve.

\begin{lemma}\label{lem:pqtransversecurve}
	There is no transverse curve of $(0,0)$-type on an invariant torus. An invariant torus admits a $(p,q)$-type transverse curve if and only if $(p,q)$ is not a tangent vector of the linear motion on the torus. 
\end{lemma}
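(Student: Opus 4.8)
The plan is to unpack the definition of a transverse curve on the torus $T$, on which the flow is a uniform linear motion in some constant direction $v \in \R^2$ (thought of as a direction vector on the universal cover $\R^2$ of $T$). A closed curve $C \subset T$ is \emph{transverse} to the flow precisely when at every point the tangent line to $C$ is not parallel to $v$; equivalently, $C$ may be isotoped (within its homology class, rel transversality) to have constant slope different from that of $v$. So the whole statement is really about which homology classes $(p,q) \in \Z \oplus \Z$ contain a representative everywhere transverse to the direction $v$.

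First I would establish the easy direction and the degenerate case. If $(p,q) = (0,0)$, any representative is null-homotopic, hence bounds a disk in $T$; a closed curve bounding a disk cannot be everywhere transverse to a non-vanishing vector field on that disk, because the vector field restricted to the disk would have index $0$ around the boundary while transversality of the boundary forces the boundary to be a limit cycle-type obstruction — more simply, a nowhere-tangent closed curve on $T$ must be incompressible (a standard fact: the flow direction gives a transverse foliation near $C$ forcing $[C] \neq 0$). This disposes of the first sentence. Next, if $(p,q)$ \emph{is} a tangent vector of the linear motion, i.e. $(p,q)$ is a nonzero integer multiple of (a primitive vector in the direction of) $v$, then any closed curve of type $(p,q)$ is homologous, hence in $T$ isotopic, to a union of closed orbits of the flow; I would argue that it cannot be made transverse because its homology class forces its "average slope" to equal that of $v$, and a curve whose homology direction coincides with $v$ must be tangent to $v$ somewhere (integrate: if $C$ is always transverse with, say, $C$ always crossing $v$ from one fixed side, then the algebraic intersection number of $C$ with an orbit is nonzero, but two parallel classes have zero intersection number — contradiction). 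This is the one place a short computation with $\mathbf{Ins}$ is needed, and it is the crux of the "only if".

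For the converse — if $(p,q)$ is \emph{not} a tangent (direction) vector of the linear flow, then a $(p,q)$-transverse curve exists — I would give the explicit construction on the rectangle model: represent $(p,q)$ by a straight line of constant rational slope $q/p$ (or a vertical line if $p = 0$) on the universal cover, which descends to an embedded (when $\gcd(p,q)=1$) or immersed/multiply-covered closed curve of type $(p,q)$ on $T$. Since the slope of this line is $q/p \neq$ (slope of $v$) by hypothesis, the curve is everywhere transverse to the linear flow. A minor point to address is the case $\gcd(p,q) = d > 1$: take the primitive straight curve of type $(p/d, q/d)$ and perturb a $d$-fold parallel copy (or just take $d$ disjoint parallel straight copies, whose union is a transverse multicurve of class $(p,q)$) — depending on whether the paper wants connected transverse curves here; since Lemma \ref{lem:pqtransversecurve} does not insist on connectedness, parallel copies suffice. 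I would also record that "$(p,q)$ not a tangent vector of the linear motion" is exactly "$(p,q)$ is not a real multiple of $v$", which is the clean hypothesis to feed into later lemmas.

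The main obstacle is the "only if" direction in the tangent-vector case: making rigorous that a closed curve whose homology class points in the flow direction \emph{must} be somewhere tangent to the flow. The cleanest route is the intersection-number argument: if $C$ were everywhere transverse to the linear flow, then orienting $C$ so that the flow crosses it positively, the algebraic intersection number $\mathbf{Ins}(C, \gamma)$ with any orbit $\gamma$ equals the (signed, hence with constant sign) geometric count, which is nonzero; but $[\gamma]$ is the primitive class in the direction of $v$, and if $[C] = k[\gamma]$ then $\mathbf{Ins}(C,\gamma) = k\,\mathbf{Ins}(\gamma,\gamma) = 0$, a contradiction. One must be slightly careful when the flow on $T$ is irrational (orbits are not closed): then replace $\gamma$ by a nearby closed curve of the primitive direction and use density/compactness to get the positivity of the geometric intersection count, or argue directly on the rectangle that a curve of rational homology slope equal to the irrational flow slope is impossible unless $(p,q)=(0,0)$, already handled. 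I expect the write-up of this to be short once the intersection-number bookkeeping is set up.
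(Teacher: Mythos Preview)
Your proposal is correct, and the explicit straight-line construction for the ``if'' direction matches the paper exactly. The ``only if'' direction, however, is done differently. The paper argues via the mean value theorem (really Rolle's theorem): lift a $(p,q)$-type curve to a path in $\R^2$ from $0$ to $(p,q)$, project onto the line orthogonal to $(p,q)$, and note that this projection starts and ends at $0$, so its derivative vanishes somewhere---at that point the tangent to the curve is parallel to $(p,q)$, hence to the flow if $(p,q)$ is a flow direction. The $(0,0)$ case is the same idea applied to a closed plane curve. Your intersection-number argument (transversality forces all crossings with an orbit $\gamma$ to have the same sign, hence $\mathbf{Ins}(C,\gamma)\neq 0$, contradicting $[C]=k[\gamma]$) is a valid alternative and connects nicely to the $\mathbf{Ins}$ formalism used later in the paper, but it is heavier than needed here and requires the extra remark that $C$ must actually meet some orbit. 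The paper's Rolle-type argument is a one-liner and handles the $(0,0)$ case and the rational/irrational dichotomy uniformly, so you may want to adopt it.
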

\begin{proof}
	If a curve is of $(0,0)$-type, it is a contractable curve with all direction of tangent vectors. There must exist a point where the curve is tangent to the flow line. Thus there is no transverse curve of $(0,0)$-type.
	
	Now, consider the case when $(p,q)\ne (0,0)$. Since the flow is linear. If $(p,q)$ is not a tangent vector of the flow, then the line through $(0,0)\in T$ with tangent vector $(p,q)$ is always transverse to the flow. And it corresponds to a closed curve of $(p,q)$-type. 
	
	If the torus has a $(p,q)$-type transverse curve. By the mean value theorem, there exist some point such $(p,q)$ is a tangent vector of the curve. Thus $(p,q)$ cannot be a tangent vector of the linear motion on the torus. 
\end{proof}

\begin{example}
	Consider $H=|z_1|^2+|z_2|^2$. Then $H=1$ is the standard $S^3$. One easily verifies that the following surfaces are different types of global section.\\
	(i) $r_1\in (0,1], \theta_1=0$. This is an open disk with boundary orbit $|z_2|=1$.\\
	(ii) $(r_1,\theta,\sqrt{1-r_1^2},-\theta)$, where $r_1\in(0,1), \theta\in S^1$. This is an open annulus with two boundary orbits $|z_1|=1$ and $|z_2|=1$. 
\end{example}
Actually, it is possible to construct an annulus-like Birkhoff section for standard shpere such that the two boundaries are a $p$-fold cover of the $|z_1|=1$ and a $q$-fold cover of $|z_2|=1$. To achieve this, one just need to ensure that the intersection of the Birkhoff section with every invariant torus is a $(p,-q)$-type transverse curve.

\begin{lemma}\label{lem:mntype}
	If $\Gamma=S\cap T$ is a transverse curve of $(m,n)$-type. Then $\Gamma$ is composed of $l=\mathbf{gcd}(m,n)$ distinct connected components, where each component is of  $(\frac{m}{l},\frac{n}{l})$-type. Here $\mathbf{gcd}(m,n)$ is the greatest common divisor of $m$ and $n$(if either $m=0$ or $n=0$, then $l$ is absolute value of the non-zero integer).
\end{lemma}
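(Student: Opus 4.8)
The plan is to decompose $\Gamma$ into its connected components $\gamma_1,\dots,\gamma_l$, each an embedded circle in $T$, and to show that they all carry one and the same primitive homology class, necessarily $(m/l,n/l)$. First I would rule out contractible components: a contractible embedded circle bounds a disk in $T$, and by the mean value argument already used in the proof of Lemma \ref{lem:pqtransversecurve}, such a curve must be tangent to the linear flow at some point, contradicting the transversality of $\Gamma$. Hence each $\gamma_i$ represents a nonzero class $(p_i,q_i)\in\Z\oplus\Z$.

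Next I would invoke the classical fact that an embedded non-contractible circle on the $2$-torus represents a primitive class, i.e.\ $\mathbf{gcd}(p_i,q_i)=1$ (equivalently, $(p_i,q_i)$ extends to a basis of $H_1(T)$). Since the $\gamma_i$ are pairwise disjoint, $\mathbf{Ins}(\gamma_i,\gamma_j)=p_iq_j-q_ip_j=0$ for $i\ne j$, so these primitive vectors are all parallel and therefore equal up to sign. The sign is pinned down by the flow: by the definition of a Birkhoff section the flow is positively transverse to $\Gamma$, which equips every $\gamma_i$ with a co-orientation; orienting each $\gamma_i$ compatibly with this co-orientation and the fixed orientation of $T$ yields the same primitive class $(p_0,q_0)$ for all $i$ (two disjoint circles both transverse to the linear flow, oriented so that the flow crosses each positively, are homologous, not anti-homologous).

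Finally, bilinearity of $\mathbf{Ins}$ together with the characterization $p=\mathbf{Ins}(\Gamma,g_2)$, $q=\mathbf{Ins}(g_1,\Gamma)$ recorded in Section \ref{sec:2} gives $(m,n)=\sum_{i=1}^{l}(p_i,q_i)=l\,(p_0,q_0)$. Hence $l$ divides both $m$ and $n$ and $(p_0,q_0)=(m/l,n/l)$; primitivity of $(p_0,q_0)$ forces $l=\mathbf{gcd}(m,n)$, with the stated convention when one of $m,n$ vanishes. The degenerate case $(m,n)=(0,0)$ does not occur, since $\Gamma$ is transverse (Lemma \ref{lem:pqtransversecurve}).

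I expect the only genuinely delicate point to be the orientation bookkeeping in the middle step: ensuring that disjointness together with the common co-orientation induced by the flow forces the components to be honestly homologous rather than merely homologous up to sign, which is precisely what prevents cancellation in the sum $\sum_i(p_i,q_i)$. The primitivity of embedded-circle classes on $T^2$ is standard and can simply be quoted, and the remaining arithmetic is immediate from bilinearity of the intersection form.
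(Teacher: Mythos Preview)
Your argument is correct but proceeds along a different line from the paper's. The paper gives a direct geometric argument: it views $T$ as a rectangle with opposite edges identified, deforms $\Gamma$ within its homology class so that its intersections with the edges are evenly spaced, and then uses embeddedness to conclude that $\Gamma$ is isotopic to a family of parallel straight segments with tangent direction $(m,n)$; the number of components is then read off as $\gcd(m,n)$. You instead work homologically: no contractible components (mean-value argument), embedded essential circles carry primitive classes, disjointness forces zero algebraic intersection hence parallel primitive classes, and the co-orientation from positive transversality to the linear flow pins down the common sign, after which $(m,n)=l\,(p_0,q_0)$ with $(p_0,q_0)$ primitive gives $l=\gcd(m,n)$. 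Your route makes the orientation bookkeeping explicit---exactly the step the paper glosses over---and avoids the somewhat informal ``deform to evenly spaced intersections'' move. On the other hand, the paper's isotopy picture feeds more directly into Corollary~\ref{cor:homotopy} (contractibility of the space of transverse curves of a given type), which your purely homological argument does not immediately yield. The sign step you flag as delicate is indeed the crux, and your justification (the tangent vector of each oriented component lies in the fixed open half-plane $\{w:\det(w,v)>0\}$ determined by the flow direction $v$, hence the net displacement and thus the homology class lies on the same side for every component) is sound.
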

\begin{proof}
	Since $S$ is embedded, $\Gamma$ is a 1-dimensional closed manifold. In rectangle coordinates, $\Gamma$ intersects a vertical line $n$-times and a horizon line $m$-times. We can deform the curve within the homology class so that these intersection points are uniformly distributed at the boundaries of rectangle. Because $\Gamma$ cannot self-intersect, it follows that $\Gamma$ is isotopic to a set of uniformly distributed parallel lines in the rectangle, with tangent vector $(m,n)$. Hence, $\Gamma$ is composed of $l$ components with each component to be a curve of $(\frac{m}{l},\frac{n}{l})$-type.
\end{proof}

\begin{corollary}\label{cor:homotopy}
	The space of transverse curves on a torus with a given homology type is contractible.
\end{corollary}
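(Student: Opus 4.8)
The plan is to reduce the statement, via the universal cover and transversality, to the contractibility of a convex subset of an affine function space. Write $T=\R^{2}/\Z^{2}$ with the linear flow lifting to a constant nonzero vector field $v$ on $\R^{2}$, and fix $v^{\perp}$ with $v^{\perp}\cdot v=0$, giving $\R^{2}$ coordinates $(c,s)$ along $v^{\perp}$ and $v$. By Lemma \ref{lem:mntype} a transverse curve of type $(m,n)$ is a disjoint union of $l=\gcd(m,n)$ parallel embedded curves of the primitive type $(m/l,n/l)$, and applying a suitable element of $\mathrm{GL}(2,\Z)$ — which acts on $T$ by a linear diffeomorphism carrying linear flows to linear flows, hence induces a homeomorphism between the corresponding spaces of transverse curves — I may assume this common primitive type is $(1,0)$. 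So it suffices to treat the space of $l$ disjoint parallel embedded curves of type $(1,0)$ transverse to $v$.

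The key observation is that transversality makes each lifted curve a graph over the line $\R v^{\perp}$. If $\gamma$ parametrizes a component $\widetilde\Gamma$ of the preimage in $\R^{2}$ of one such curve, then $\gamma'\wedge v$ is continuous and nowhere zero, hence of constant sign; since up to a nonzero constant it is the derivative of the $c$-coordinate of $\gamma$, that coordinate is strictly monotone and proper, so $\widetilde\Gamma=\{\,c\,v^{\perp}+f(c)\,v:c\in\R\,\}$ for some $f\in C^{\infty}(\R)$. Conversely, every such graph is automatically transverse to $v$, with no condition on $f$. Descending to a closed curve of type $(1,0)$ amounts to the functional equation $f(c+\alpha)=f(c)+\beta$, where $(\alpha,\beta)$ is the decomposition of the lattice vector $(1,0)$ along $v^{\perp}$ and $v$, and $\alpha\neq0$ because $(1,0)$ is not tangent to the flow (Lemma \ref{lem:pqtransversecurve}); the solution set is the affine subspace of $C^{\infty}(\R)$ consisting of one fixed linear solution plus all $\alpha$-periodic smooth functions.

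To finish, I would list the sheets $\widetilde\Gamma_{i}$ ($i\in\Z$) of the full preimage; each is such a graph, and embeddedness of the original curve is exactly the condition that no two of these graphs meet — a cyclically consistent family of strict inequalities $f_{i}(c)<f_{j}(c)$ which is open and convex inside the affine space. Hence, after marking one sheet (equivalently, fixing a basepoint on the curve), the space of transverse curves of the given type is modeled by a convex subset of an affine space: straight-line homotopies preserve both the affine relations and the inequalities, transversality is automatic for every graph, and one obtains a canonical deformation retraction onto the uniformly spaced linear configuration of Lemma \ref{lem:mntype}. Undoing the $\mathrm{GL}(2,\Z)$ reduction and the covering then gives the corollary.

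The main obstacle is bookkeeping rather than analysis: setting up precisely the correspondence between an embedded transverse curve in $T$ and the ordered $\Z$-family of periodic graphs in $\R^{2}$ so that it is a homeomorphism for the $C^{\infty}$ topology, and verifying that the non-crossing condition on the sheets really is convex, so that the straight-line homotopy never leaves the space of embedded transverse curves. One should also be mindful that without a marked sheet or basepoint there is a residual translation ambiguity; the genuinely contractible object is the space of based transverse curves, and one checks this is what is needed in the applications. The analytic inputs — monotonicity of the $c$-coordinate, automatic transversality of graphs, and convexity of the non-crossing condition — are each immediate once this setup is in place.
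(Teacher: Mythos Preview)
The paper gives no proof of this corollary; it is simply asserted as an immediate consequence of Lemma~\ref{lem:mntype}. Your argument is therefore far more detailed than anything the paper provides, and the graph-over-$v^{\perp}$ reduction together with the convexity of the non-crossing condition is the right way to make the statement precise.

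More importantly, you have correctly flagged a genuine issue with the statement itself. The space of \emph{unbased} embedded transverse curves of a fixed primitive type is not contractible: already the subspace of linear representatives is a copy of $S^{1}$ (parametrised by the transverse intercept), and your straight-line retraction shows the full space deformation retracts onto it. For type $(m,n)$ with $l=\gcd(m,n)>1$ one similarly lands on the unordered configuration space of $l$ points in $S^{1}$, again homotopy equivalent to $S^{1}$. So the corollary, read literally, is false; what your argument actually proves is that the space is path-connected (indeed, homotopy equivalent to $S^{1}$), and that the \emph{based} version is contractible.

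This is not fatal for the paper: every use of Corollary~\ref{cor:homotopy} (in Lemma~\ref{lem:limitcurve} and Proposition~\ref{pro:nontransbroken}) only requires that any two transverse curves of the same type can be joined by a path of transverse curves, and since the families of tori are parametrised by intervals rather than circles, path-connectedness of the fibre suffices for the gluing arguments. Your observation that ``one checks this is what is needed in the applications'' is exactly right and is a genuine correction to the paper's statement.
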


\begin{lemma}\label{lem:invarianthomology}
	Let $T_s$ be a family of tori intersecting $S$ transversely. Denote $\Gamma_s=T_s\cap S$. Then the homology of $\Gamma_s$ does not depend on $s$.
\end{lemma}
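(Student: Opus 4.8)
The plan is to show that $s\mapsto[\Gamma_s]$ is locally constant on the parameter interval of the family, and then conclude by connectedness.

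First I would set up a local product picture. Fix a parameter value $s_0$ in the family. Since the tori $T_s$ are leaves of the invariant foliation of the open dense set $O$, a neighbourhood of $T_{s_0}$ in $O$ is diffeomorphic to a product $T^2\times I$ (with $I$ a closed interval) in which $T_s$ corresponds to the slice $T^2\times\{t\}$; by hypothesis $S$ is transverse to every torus of the family, so after shrinking $I$ we may assume $S$ meets every slice transversely. Put $R=T^2\times I$ and $\Sigma=S\cap R$. Because $S$ is compact and, away from $\partial S$, embedded and transverse to the boundary slices, $\Sigma$ is a compact surface properly embedded in $R$ with $\partial\Sigma\subset T^2\times\partial I$, transverse to every slice $T^2\times\{t\}$, and $\Sigma\cap(T^2\times\{t\})=\Gamma_t$.

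Next I would analyse the projection $\pi\colon R=T^2\times I\to I$ restricted to $\Sigma$. Transversality of $S$ to the slices is exactly the statement that $d\pi$ is surjective on $T\Sigma$ at every point, including boundary points (there $T\Sigma=TS$), so $\pi|_\Sigma$ is a submersion that is also submersive along $\partial\Sigma$; and it is proper since $\pi^{-1}(K)\cap\Sigma=S\cap(T^2\times K)$ is closed in the compact surface $S$ for every compact $K\subset I$. By Ehresmann's theorem (for manifolds with boundary) $\pi|_\Sigma$ is a locally trivial bundle over the interval $I$, hence trivial: $\Sigma\cong\Gamma_{s_0}\times I$ over $I$. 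Composing this trivialisation with $\Sigma\hookrightarrow T^2\times I$ and the projection to $T^2$ exhibits, for $t$ near $s_0$, the curve $\Gamma_t\subset T^2$ as obtained from $\Gamma_{s_0}$ by an isotopy of embedded $1$-manifolds in $T^2$; in particular $[\Gamma_t]=[\Gamma_{s_0}]$ in $H_1(T^2;\Z)$ for a compatible choice of orientations. (Equivalently, one can argue homologically: $\Sigma$ is a compact oriented cobordism inside $R$ between $\Gamma_{s_0}$ and $\Gamma_{s_1}$, so these curves are homologous in $H_1(R;\Z)\cong H_1(T^2;\Z)$.)

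The hard part — really the only subtle point — is the orientations, since a priori the argument above only gives $[\Gamma_t]=\pm[\Gamma_{s_0}]$. The curves carry a specific orientation: $\Gamma_s$ is oriented so that the invariant flow is positively transverse to it inside $T_s$. I would note that $M$ is oriented and $X$ is non-vanishing and transverse to $S$, so $S$ (hence $\Sigma$) has a canonical induced orientation; the family of invariant tori has a consistent transverse co-orientation across the whole family; and $X$ varies continuously with $s$. Therefore the co-orientation of $\Gamma_t$ in $T_t$ given by "the flow crosses positively" varies continuously in $t$ and is transported correctly by the isotopy above — equivalently, with these conventions $\partial[\Sigma]=[\Gamma_{s_1}]-[\Gamma_{s_0}]$ as oriented cycles in $H_1(R;\Z)$, which pins down the sign. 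Hence $[\Gamma_t]=[\Gamma_{s_0}]$ as oriented classes, so $s\mapsto[\Gamma_s]$ is locally constant; and since along one family the relevant component of $O$ is a single product $T^2\times J$ over a connected interval $J$, yielding one identification $H_1(T_s)\cong H_1(T^2)$ for all $s$, the homology class of $\Gamma_s$ is independent of $s$.
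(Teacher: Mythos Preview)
Your proof is correct but takes a different route from the paper. The paper first invokes Lemma~\ref{lem:mntype} to reduce to the case where $\Gamma_s$ is connected, and then observes that the homology class of an embedded circle $\Gamma_s\hookrightarrow T_s$ is determined by the degrees of the two composites $S^1\xrightarrow{\Gamma_s}T_s\xrightarrow{pr}S^1$; since these degrees vary continuously with $s$ and are integer-valued, they are constant. Your argument instead packages the whole family as a surface $\Sigma\subset T^2\times I$ and appeals to Ehresmann's theorem (or equivalently the cobordism $\partial[\Sigma]=[\Gamma_{s_1}]-[\Gamma_{s_0}]$) to get an isotopy of the $\Gamma_t$'s. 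The paper's approach is lighter---it needs nothing beyond continuity of degree---while yours is more structural and has the advantage of handling the component count and the homology class in one stroke, without the preliminary reduction via Lemma~\ref{lem:mntype}. Your treatment of orientations is also more explicit than the paper's, which leaves that point implicit.
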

\begin{proof}
	By Lemma \ref{lem:mntype}, a transverse curve on an invariant torus decomposes into several disjoint closed curves, each with the same homology type. By transversality, each connected component induces a unique family of closed transverse curves that continuously depend on $s$, and different families do not intersect. Thus the number of connected components of $\Gamma_s$ remains constant. 
	
	It suffice to prove that the homology of each connected component of $\Gamma_s$ does not depend on $s$. Without loss of generality, assume $\Gamma_s$ consists of a single component. In this case, $\Gamma_s$ defines an embedding: $S^1\rightarrow T_s$. The homology of $\Gamma_s$ is determined by the degree of the following map:
	
	\[S^1\xrightarrow{\ \Gamma_s\ }T_s\xrightarrow{\ pr \ } S^1.\]
	where $pr: T_s=S^1\times S^1\rightarrow S^1$ is either of the two canonical projection maps. Since $\Gamma_s$ is continuous with respect to $s$, the degree must be constant. 
\end{proof}

To construct a Birkhoff section $S$, it suffices to ensure that the intersection of $S\backslash\partial S$ with every invariant torus of the flow is a transverse curve. The idea is to construct transverse curves on each invariant torus, allowing them to vary continuously with the tori. The closure of the union of these transverse curves will then form the desired Birkhoff section. If, in addition, the Birkhoff section is embedded at the boundary (i.e., the boundary orbits are simply covered), the resulting surface is a global surface of section.

Lemma \ref{lem:invarianthomology} reveals that certain constraints arise when constructing these transverse curves globally, which will play a crucial role in proving the non-existence of certain types of Birkhoff sections. On the other hand, Lemma \ref{lem:pqtransversecurve} illustrates the flexibility of transverse curves on an individual invariant torus, enabling the construction of various types of Birkhoff sections.

By combining Lemma \ref{lem:invarianthomology} and Lemma \ref{lem:pqtransversecurve}, we obtain the following proposition.

\begin{proposition}\label{pro:boundary_exist}
	Let $T_s, s\in (a,b)$ be a family of invariant tori such that linear flow covers all directions on the plane, i.e. the unit tangent vector of the flow go through all points in $\mathbb{RP}^1$. Then $\partial S\cap \bigcup_{s\in (a,b)} T_s\ne \emptyset$.
\end{proposition}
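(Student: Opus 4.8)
The plan is to argue by contradiction: suppose $S$ is a Birkhoff section and $\partial S \cap \bigcup_{s\in(a,b)} T_s = \emptyset$. Then every torus $T_s$ with $s \in (a,b)$ meets $S$ in its interior, and since $\partial S$ is disjoint from this family, the intersection $\Gamma_s = T_s \cap S$ is a closed curve transverse to the flow for every $s \in (a,b)$. By Lemma \ref{lem:invarianthomology}, the homology class of $\Gamma_s$ is independent of $s$; call it $(p,q) \in \Z\oplus\Z$. By Lemma \ref{lem:pqtransversecurve}, a transverse curve of type $(p,q)$ exists on $T_s$ only if $(p,q)$ is \emph{not} a tangent direction of the linear flow on $T_s$, i.e.\ the class $[p:q] \in \mathbb{RP}^1$ differs from the direction of the flow vector on $T_s$.

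First I would verify that $\Gamma_s$ is genuinely nonempty for each $s \in (a,b)$: condition (3) in Definition \ref{def:birkhoffsection} forces every point of $T_s$ to flow forward into $\iota(S)$, and since $\partial S$ avoids this torus family, the forward-hitting point lies in $S \setminus \partial S$, which is embedded and transverse to $X$; hence $T_s \cap S \neq \emptyset$ and the intersection is transverse, so $\Gamma_s$ is a nonempty $1$-manifold. Next, by Lemma \ref{lem:pqtransversecurve} applied at a single $s_0$, the fixed class $(p,q)$ cannot be $(0,0)$ and must avoid the flow direction on $T_{s_0}$. Now I would invoke the hypothesis: as $s$ ranges over $(a,b)$, the unit tangent direction of the linear flow on $T_s$ sweeps out all of $\mathbb{RP}^1$. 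In particular there exists some $s_* \in (a,b)$ at which the flow direction equals $[p:q]$ (the direction determined by the fixed homology class, viewing $(p,q)$ as a primitive vector up to the $\gcd$). At that torus $T_{s_*}$, the class $(p,q)$ is a tangent vector of the linear motion, so by Lemma \ref{lem:pqtransversecurve} no transverse curve of type $(p,q)$ can exist on $T_{s_*}$ — contradicting the fact that $\Gamma_{s_*}$ is such a curve.

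The main obstacle, and the point requiring care, is the matching between the discrete homology class $(p,q) \in \Z\oplus\Z$ and the continuous flow direction in $\mathbb{RP}^1$: the flow direction is only defined projectively (as an element of $\mathbb{RP}^1$), while $(p,q)$ carries sign and multiplicity. The correct statement is that $\Gamma_s$ fails to be transverse precisely when the \emph{line} spanned by $(p,q)$ coincides with the flow line, which is the content of Lemma \ref{lem:pqtransversecurve} (restated via the mean value / intermediate value argument). One should also note that the flow direction genuinely varies continuously with $s$ by the standing assumption that the flow on each torus is a uniform linear motion depending smoothly on the torus; combined with the hypothesis that it covers all of $\mathbb{RP}^1$, this guarantees the bad parameter $s_*$ actually occurs inside the open interval $(a,b)$. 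With that observation in place the contradiction is immediate, so the proposition follows.
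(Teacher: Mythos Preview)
Your proposal is correct and follows exactly the approach the paper intends: the paper does not give an explicit proof but simply states that the proposition is obtained ``by combining Lemma~\ref{lem:invarianthomology} and Lemma~\ref{lem:pqtransversecurve}'', which is precisely the contradiction argument you spell out. Your additional care about nonemptiness of $\Gamma_s$ and the matching between the integer class $(p,q)$ and the projective flow direction is more detail than the paper provides, but it is all in line with the authors' implicit reasoning.
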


\begin{proposition}\label{pro:nopq}
	Let $T_s, s\in (a,b)$ be a family of invariant tori. Suppose there exist a unique $s_0\in (a,b)$ such that $\partial S\cap T_{s_0}\ne \emptyset$. Assume the periodic orbits on $T_{s_0}$ are of $(p,q)$-type. Then the homology type of transverse curves on $T_s$ for $s\ne s_0$ cannot be a multiple of $(p,q)$.
\end{proposition}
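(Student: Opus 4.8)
The plan is to argue by contradiction using Lemma \ref{lem:invarianthomology} together with Lemma \ref{lem:pqtransversecurve}. Suppose that for some $s_1 \ne s_0$ the transverse curve $\Gamma_{s_1} = T_{s_1} \cap S$ has homology type $k(p,q)$ for some integer $k \ne 0$. Since $s_0$ is the \emph{unique} value in $(a,b)$ at which $\partial S$ meets the family, for every $s$ in the connected interval between $s_1$ and $s_0$ (excluding $s_0$ itself) the torus $T_s$ intersects $S$ transversely in an honest transverse curve, and the subinterval $(s_1, s_0)$ (or $(s_0, s_1)$) forms a family of tori all meeting $S$ transversely. By Lemma \ref{lem:invarianthomology} the homology type of $\Gamma_s$ is constant on this subinterval, hence equal to $k(p,q)$ for all $s$ strictly between $s_1$ and $s_0$.

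Next I would take the limit $s \to s_0$. As $s \to s_0$, the tori $T_s$ converge to $T_{s_0}$, and the transverse curves $\Gamma_s$ — all of fixed homology type $k(p,q)$ by the previous step — converge (after passing to a subsequence if necessary, or directly by the smooth dependence built into the construction) to a limiting closed curve on $T_{s_0}$ of the same homology type $k(p,q)$. Now the linear flow on $T_{s_0}$ has tangent direction exactly $(p,q)$ (this is the hypothesis that the periodic orbits on $T_{s_0}$ are of $(p,q)$-type). A closed curve of homology type $k(p,q)$ is, by Lemma \ref{lem:mntype}, isotopic to $|k|$ parallel copies of the $(p,q)$-curve, i.e.\ it is isotopic to a union of flow lines; by the mean value argument in the proof of Lemma \ref{lem:pqtransversecurve}, any closed curve in this homology class must be tangent to the linear flow at some point. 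Hence the limiting curve cannot be everywhere transverse to the flow on $T_{s_0}$, which contradicts $\Gamma_{s_0} = S \cap T_{s_0}$ being a bona fide transverse curve away from the (single, codimension-positive) boundary orbit — more precisely, it contradicts the fact that away from $\partial S$ the surface $S$ is transverse to $X$, so on the complement of the boundary orbit the intersection with $T_{s_0}$ is still transverse and these transverse arcs must match the limit of the $\Gamma_s$.

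The main obstacle is making the limiting step rigorous: one must argue that the transverse curves $\Gamma_s$ of fixed homology type genuinely limit onto a closed curve on $T_{s_0}$ rather than degenerating (for instance escaping to the boundary orbit or breaking up), and that this limit curve's failure of transversality is incompatible with the structure of $S$ near its boundary. I expect this to be handled by the same continuity/embeddedness input already used in Lemma \ref{lem:invarianthomology}: $S$ is embedded in $M \setminus \iota(\partial S)$ and transverse there, so the family $\Gamma_s$ extends continuously across $s_0$ on the complement of the boundary orbit, forcing the limit to be a curve in class $k(p,q)$ that is transverse off a single point — and a curve isotopic to flow lines cannot be transverse off only one point when $(p,q) \ne (0,0)$. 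This yields the contradiction and completes the proof.
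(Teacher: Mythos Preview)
Your reduction via Lemma~\ref{lem:invarianthomology} is correct and matches the paper. The gap is in the closing step. You correctly note that $S\cap T_{s_0}$ contains transversal arcs away from the boundary orbit and that these must appear in the limit; but your final assertion --- ``a curve isotopic to flow lines cannot be transverse off only one point'' --- is stated without justification, and the mean-value argument from Lemma~\ref{lem:pqtransversecurve} does not supply it. That lemma only tells you the limit curve is tangent to $(p,q)$ \emph{somewhere}, and that tangency is happily absorbed by the boundary-orbit portion of the limit (indeed nothing you wrote rules out the limit being the boundary orbit itself, covered $|k|$ times). What you actually need is: a closed curve in class $k(p,q)$ cannot contain any arc positively transverse to direction $(p,q)$. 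This is true --- in the universal cover each such arc moves strictly to one side of the $(p,q)$-lines, so the net displacement cannot be parallel to $(p,q)$ --- but this argument is absent from your sketch and is not a consequence of the mean-value step.

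The paper obtains the contradiction by a different mechanism, working on the nearby tori $T_s$ rather than on $T_{s_0}$. Having noted that the transversal segments lie in the limit, the paper argues: if the limit is not of constant slope $p/q$, then for $s$ near $s_0$ the tangent slopes of $\Gamma_s$ sweep an interval around $p/q$, while the linear flow on $T_s$ has slope approaching $p/q$; hence for some such $s$ the flow slope lands in that interval and $\Gamma_s$ fails to be transverse already on $T_s$. This forces the limit to consist entirely of periodic orbits, contradicting the presence of the transversal segments. So the paper's extra input is the variation of the flow direction on nearby tori --- your approach bypasses this, but at the cost of leaving the decisive claim unproved.
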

\begin{proof}
	For a non-transversal torus $T_{s_0}$, the intersection $\partial S\cap T_{s_0}$ consists of boundary orbits and some transversal segments. Since the transversal part extends to nearby tori, the limit curve of transverse curves on nearby tori must contain these transversal segments.
	
	We only prove the case for $s\in (a,s_0)$.	Assume the transverse curve on $T_s$ is of $k(p,q)$-type($k\ne 0$). If $\Gamma_s=S\cap T_s$ does not converge to a curve of constant slope as $s\rightarrow s_0^-$. Then the slopes of the transverse curves will contain an interval $(\frac{p}{q}-\epsilon,\frac{p}{q}+\epsilon)$ for some $\epsilon>0$. On the other hand, for $\epsilon>0$ small enough, the slopes of motion on $T_s(s\in (a,s_0))$ have non-empty intersection with $(\frac{p}{q}-\epsilon,\frac{p}{q}+\epsilon)$. Contradicts to the transverse assumption. 
	
	Thus $\Gamma_{s_0}=\lim_{s\rightarrow s_0} \Gamma_s$ must be a curve of constant slope $\frac{p}{q}$. $\Gamma_{s_0}$ must be composed of some periodic orbit, which cannot be transversal to flow lines in $T_{s_0}\backslash \partial S$. Our assumption that $T_s$ is of $k(p,q)$-type($k\ne 0$) does not hold. 
\end{proof}

The two propositions above show that the dynamical behavior on a family of invariant tori imposes certain restrictions on possible types of Birkhoff sections. In fact, Proposition \ref{pro:boundary_exist} is sufficient to construct a Reeb flow on $S^3$ with non-simple Birkhoff section. More specifically, for any positive integer $k$, there exist a Reeb flow on $S^3$ such that corresponding Birkhoff sections must have at least $k$ different boundary orbits. This result will be given as an application in Section \ref{sec:application}.

\section{Transverse curves near a transversal broken torus}\label{sec:3}
In Section \ref{sec:2}, we studied the behavior of transverse curves on a family of invariant tori. Typically, $M$ contains different families of invariant tori. After removing these regular invariant tori, the remaining part consists of singular invariant sets. These singular invariant sets may either be periodic orbits that serve as the ends of families of tori, or they could be invariant sets that separate distinct families of tori. For convenience, we introduce the notion of a broken torus. Throughout this section, $B$ will always denote a regular broken torus, and 
$S$ will refer to a Birkhoff section.

In the neighborhood of a broken torus $B$, different families of tori may exhibit distinct types of transverse curves. Since these families are separated by $B$, each family asymptotically approaches part of the broken torus. By continuity, the transverse curves on each family of invariant tori will converge to part of the intersection curves on $B$. As a result, the transverse curve on a broken torus will completely determine the homology types of the transverse curves on nearby invariant tori. Understanding the possible types of transverse curves that a broken torus can support, as well as the relationships between the homology classes of transverse curves across adjacent families of tori, is therefore essential. In this section, we explore these questions in detail.

\begin{definition}\label{def:brokentorus}
	A regular \textsl{broken torus} is a connected, closed invariant set that separates families of invariant tori. It consists of a finite collection of isolated periodic orbits and open annuli, where the boundary of each annulus is formed by two of these periodic orbits.
\end{definition}

\begin{remark}
	By definition, certain undesirable cases are excluded, such as: (i) singular invariant sets that contain a continuous family of periodic orbits; (ii) singular invariant set that occupy an open subset of $M$. 
\end{remark}

\begin{lemma}\label{lem:rectanglerepre}
	$B$ can be represented as a rectangle containing a collection of invariant lines, with equivalence  relations imposed among them. Each invariant line corresponds to a periodic orbit.
\end{lemma}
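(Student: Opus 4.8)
The plan is to unfold the broken torus $B$ along its periodic orbits into a planar region. Since $B$ is a connected closed invariant set consisting of finitely many periodic orbits $\gamma_1,\dots,\gamma_m$ together with finitely many open annuli $A_1,\dots,A_n$, each annulus $A_j$ being bounded by a pair of these periodic orbits, I would first record the combinatorial data as a graph: vertices are the periodic orbits, edges are the annuli (an edge joining the two boundary orbits of that annulus, possibly a loop if both boundary components are the same orbit). Connectedness of $B$ means this graph is connected.

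First I would pick a maximal tree in this graph. Cutting $B$ along each of the periodic orbits that correspond to vertices not forced by the tree, or rather cutting along edges not in the tree, turns $B$ into a finite disjoint union of annuli glued along a tree pattern; such a union can be laid out in the plane as a single rectangle (or a strip) subdivided by vertical segments, each vertical segment being a copy of a periodic orbit, with the flow being a linear motion on each sub-annulus. Concretely: order the annuli of the tree so they are glued left-to-right, representing $A_1$ as $[0,1]\times S^1$, then gluing $A_2$ on along the shared orbit, and so on; this realizes the tree part of $B$ as one rectangle $R=[0,N]\times S^1$ in which the interior vertical lines $\{1\}\times S^1,\dots,\{N-1\}\times S^1$ are the periodic orbits interior to the tree. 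Then the edges not in the tree reappear as additional identifications: the two vertical boundary sides of $R$, and possibly pairs of interior vertical lines, are identified with one another according to the non-tree annuli. Each such identified vertical line is (a copy of) a periodic orbit, which gives the statement: $B$ is a rectangle carrying a collection of invariant vertical lines with an equivalence relation among them, and every invariant line is a periodic orbit of the flow.

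The key steps in order: (1) encode $B$ as a connected graph with vertices = periodic orbits, edges = annuli; (2) choose a spanning tree and realize the union of the tree's annuli as a single rectangle $R=[0,N]\times S^1$ with interior vertical lines equal to the interior periodic orbits, using that each annulus is (up to the flow being linear, which we have assumed) standardly $[0,1]\times S^1$; (3) restore the non-tree edges as identifications among vertical lines of $R$ (the two outer sides, plus interior line pairs), recording these as the equivalence relation; (4) observe that, because the flow on each annulus is a linear motion compatible across the shared orbits, the resulting object carries a well-defined invariant foliation and each distinguished vertical line is exactly one periodic orbit. Finally I would note that the rectangle representation is not unique (it depends on the choice of spanning tree and the ordering), but any choice suffices for the later arguments.

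The main obstacle I anticipate is step (2)–(3): making precise that finitely many annuli glued cyclically or in a tree along boundary circles can be cut open and laid flat as one rectangle with side identifications, and that this can be done compatibly with the invariant linear flow structure so that the ``horizontal'' direction of the rectangle is transverse (or at least consistently oriented) across the gluings. One has to be careful that gluing two annuli $[0,1]\times S^1$ along a common boundary circle may require a reparametrization (a Dehn twist) of the $S^1$ factor, so the vertical lines are genuinely the periodic orbits but the ``horizontal'' sides of the rectangle need not close up without a twist — which is exactly why the equivalence relation on the vertical lines, rather than a naive product, is needed. Handling loops in the graph (an annulus both of whose boundary circles are the same periodic orbit) and verifying that the flow remains a uniform linear motion under these identifications is the delicate bookkeeping; everything else is routine cut-and-paste topology.
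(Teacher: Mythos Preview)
Your spanning-tree approach has a genuine gap. In your graph (vertices = periodic orbits, edges = open annuli), the non-tree edges are not identifications of one-dimensional curves: each non-tree edge \emph{is} a two-dimensional annulus, an honest open piece of $B$. If you lay out only the tree annuli in your rectangle $R=[0,N]\times S^1$ and then try to ``restore the non-tree edges as identifications among vertical lines,'' you have simply discarded those annuli --- identifying two vertical circles does not produce a new annulus between them. Test this on the figure-eight: one periodic orbit $\gamma$ and two annuli $A_1,A_2$, each with both boundary circles equal to $\gamma$. The graph has one vertex and two loops; a spanning tree has no edges at all, so your rectangle is empty (or a single line), and both annuli are missing.

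The paper instead traverses \emph{all} the annuli in one pass by finding an Eulerian circuit in the graph. The point is that every vertex has even degree: $B$ arises as the limit of a nearby invariant torus, and as that torus degenerates, the circles on it that approach the periodic orbits of $B$ cut the torus into a cyclic sequence of annuli --- this cyclic sequence \emph{is} an Eulerian circuit in the graph, and guarantees that each open annulus of $B$ is visited exactly once. Laying these annuli out left to right gives a single rectangle containing every annulus, with vertical lines (possibly repeated, hence the equivalence relation) equal to the periodic orbits. Your worry about Dehn twists is addressed in the paper by first observing that the two boundary circles of each annulus lie in the same homology class on the nearby torus, so a consistent ``vertical'' direction can be chosen across all the gluings.
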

\begin{proof}
	When a family of invariant tori terminates at $B$, a torus transitions into an invariant set that consists of lower-dimensional invariant subsets and open invariant regions between them. Since we are dealing with a non-vanishing vector field, no invariant set of dimension $0$ exists. Consequently, a broken torus consists of invariant circles, each corresponding to a periodic orbit, and open annuli bounded by these circles.
	
	Clearly, each open annulus of $B$ arises as the limit of a family of invariant tori. Its two boundary circles correspond to the limit of two families of closed curves on the tori. The two families of closed curves must belong to the same homology class on the torus. Otherwise, they would intersect, leading to a contradiction.
	
	Without loss of generality, we assume these invariant lines are vertical lines in rectangle coordinates. Starting with an invariant vertical line, we encounter an invariant open annulus, followed by the other boundary of the annulus (which is also an invariant vertical line), then another annulus, and so on. This process continues until all the invariant open annuli are accounted for exactly once. This is feasible because $B$ corresponds to a directed graph where all vertices have even degrees. As a result, we obtain a representation of the broken torus in rectangle coordinates. Naturally, equivalence relations are required between these vertical lines, as the boundaries of different invariant open annuli may overlap.
\end{proof}

\begin{lemma}\label{lem:twofamily}
	For any broken torus $B$, there exists two sets of families of invariant tori $\{T^{-\epsilon}_{1},\dots T^{-\epsilon}_{k}\}$ and $\{T^{\epsilon}_{1},\dots T^{\epsilon}_{l}\}$ with $\epsilon>0$ such that 
	\begin{equation}\label{eq:twofamily}
		B=\bigcup_{i=1}^k\lim_{\epsilon\rightarrow 0}T^{-\epsilon}_i=\bigcup_{j=1}^l\lim_{\epsilon\rightarrow 0}T^{\epsilon}_j.
	\end{equation}
\end{lemma}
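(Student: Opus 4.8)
The plan is to combine the rectangle picture of $B$ supplied by Lemma~\ref{lem:rectanglerepre} with the fact that $B$ is a connected component of the closed set $M\setminus O$. First I would fix a neighbourhood $U$ of $B$ with $\overline U\cap(M\setminus O)=B$ (possible since $B$ is a compact connected component of the closed set $M\setminus O$), so that $U\setminus B\subset O$ is foliated by invariant tori; as $O$ is dense, every point of $B$ is an accumulation point of $U\setminus B$. By Lemma~\ref{lem:rectanglerepre} write $B=A_1\cup\dots\cup A_N\cup C_1\cup\dots\cup C_{N'}$, where the $A_t$ are the open invariant annuli and the $C_j$ the periodic orbits, and recall from its proof that $B$ is modelled on a graph with vertices the $C_j$ and edges the $A_t$ in which every vertex has even degree, that $\overline{A_t}$ is $A_t$ together with its two boundary orbits, and that every $A_t$ is the Hausdorff limit of at least one family of invariant tori. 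Since $B$ is connected and contains at least one annulus, every $C_j$ is a boundary orbit of some annulus, hence $\bigcup_{t=1}^{N}\overline{A_t}=B$.

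Next I would split the invariant tori lying in $U$ into the two ``sides'' of $B$. Each $A_t$ is an orientable $2$-dimensional invariant submanifold of the orientable $3$-manifold $M$, hence two-sided, so it has two well-defined local sides in $U\setminus B$; the point is to label these ``$-$'' and ``$+$'' coherently over all of $B$. Around a periodic orbit $C_j$ the half-annuli of $B$ incident to it cut a small transverse disc into as many sectors as the degree of $C_j$, which by Lemma~\ref{lem:rectanglerepre} is even; the two local sides of an incident annulus are separated by $C_j$ and correspond to adjacent sectors, so an even number of sectors is exactly what lets the labels alternate consistently around $C_j$. Using orientability of $M$ to propagate the labelling, one obtains a decomposition $U\setminus B=U^-\sqcup U^+$ into unions of connected components such that near the interior of every $A_t$ one local side lies in $U^-$ and the other in $U^+$. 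I expect \textbf{this globally coherent labelling to be the main obstacle}; the crucial input is the even-degree condition of Lemma~\ref{lem:rectanglerepre}, which rules out the local obstruction (an odd vertex would force two adjacent sectors to share a label). Note also that both $U^-$ and $U^+$ are non-empty: near an interior point $x\in A_t$ one has $M\setminus O=A_t$ locally, so $O$ meets both sides of $A_t$.

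Finally, $U^-$ is foliated by invariant tori and, since $B$ has finitely many annuli, $U^-$ has finitely many components, each carrying near $B$ a one-parameter family $\{T^{-\epsilon}_i\}_{\epsilon>0}$, $i=1,\dots,k$, of invariant tori converging onto a subset of $B$; likewise $U^+$ produces $\{T^{\epsilon}_j\}_{\epsilon>0}$, $j=1,\dots,l$. For each family the Hausdorff limit $\lim_{\epsilon\to0}T^{-\epsilon}_i$ is a non-empty connected closed invariant subset of $B$, hence a union of some of the $\overline{A_t}$. Fixing an annulus $A_t$ and an interior point $x\in A_t$, near $x$ we have $M\setminus O=A_t$, so the two local sides of $A_t$ at $x$ lie in a component of $U^-$ and in a component of $U^+$; the tori of the corresponding families accumulate on $x$, so their Hausdorff limits contain $x$ and therefore contain $\overline{A_t}$. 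Hence $\bigcup_{i=1}^{k}\lim_{\epsilon\to0}T^{-\epsilon}_i\supseteq\bigcup_{t=1}^{N}\overline{A_t}=B$, and the reverse inclusion is immediate since each limit is contained in $B$; the same argument applied to $U^+$ gives $\bigcup_{j=1}^{l}\lim_{\epsilon\to0}T^{\epsilon}_j=B$, which is exactly \eqref{eq:twofamily}.
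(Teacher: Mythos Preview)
Your proposal is correct but reaches the two-sided splitting by a different route than the paper. The paper's proof is a short dynamical argument centred on a single annulus: if both local sides of an open annulus $A_t\subset B$ belonged to the \emph{same} family of invariant tori, then as that torus collapses onto $A_t$ from both sides the induced flow directions on $A_t$ would be opposite, which forces $A_t$ to consist entirely of periodic orbits and contradicts the regularity hypothesis in Definition~\ref{def:brokentorus}. Thus the flow itself co-orients every annulus and hence every nearby component of $U\setminus B$, and the paper simply declares the resulting dichotomy ``inside'' versus ``outside''. Your argument is purely topological: you use two-sidedness of each $A_t$ (from orientability of $M$) together with the even-degree condition at each periodic orbit (extracted from the proof of Lemma~\ref{lem:rectanglerepre}) to alternate the $\pm$ labels around every $C_j$, and then propagate. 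What you gain is that the potential obstruction to a coherent global labelling is made explicit rather than absorbed into the word ``naturally''; you also spell out carefully why both $U^\pm$ are non-empty and why the Hausdorff limits of the resulting families cover all of $B$, points the paper leaves implicit. What the paper's route buys is brevity and a direct use of the flow, which is after all the structure at hand: once one notices the opposite-direction contradiction, no graph-theoretic bookkeeping is needed.
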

\begin{proof}
	In the neighbourhood of $B$, there are different families of invariant tori. Each open annulus in $B$ is part of the limit set of a family of invariant tori and can be approached from both sides. Suppose, for contradiction, that both sides belong to the same family of tori. Then the flow directions on the two sides of the annulus would be opposite, implying that the annulus itself consists of a continuous family of periodic orbits. This contradicts the definition of a regular broken torus. Consequently, each open annulus must be the common boundary of two distinct families of invariant tori. This naturally divides these families of invariant tori into two classes: "inside" and "outside".
\end{proof}

In a rectangle representation of $B$, one may attempt to define the type of a closed curve in $B$ in the usual way, by ignoring the equivalence relations between invariant lines. However, due to these equivalence relations, a closed curve in $B$ might not remain closed in the rectangle representation. Fortunately, this closeness can be ensured when the closed curve is an intersection curve with a Birkhoff section. 

\begin{lemma}\label{lem:homologyonbro}
	Let $\Gamma_0=B\cap S$ be a transverse curve on broken torus. Then the homology type of $\Gamma_0$ is well-defined. $\Gamma_0$ is of $(p_0,q_0)$-type if and only if $\mathbf{Ins}(\Gamma_0,v_b)=p_0$ and  $\mathbf{Ins}(\Gamma_0,h_b)=q_0$, where $v_b$ and $h_b$ are any vertical and horizon lines, respectively, in rectangle representation of $B$.
\end{lemma}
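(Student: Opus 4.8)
The plan is to show two things: first that the intersection curve $\Gamma_0 = B \cap S$, when drawn in the rectangle representation of $B$ from Lemma \ref{lem:rectanglerepre}, is a genuine closed $1$-manifold (so that its homology class in the torus obtained by ignoring the equivalence relations makes sense), and second that this class is independent of which rectangle representation we picked. Throughout I will use that $B$ is a regular broken torus, so by Lemma \ref{lem:rectanglerepre} it is a rectangle with a finite family of (say) vertical invariant lines $L_1,\dots,L_m$ carrying equivalence relations, and by Lemma \ref{lem:twofamily} there are two families of invariant tori $\{T_i^{-\epsilon}\}$ and $\{T_j^{\epsilon}\}$ degenerating onto $B$ from the two sides.

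First I would argue that $\Gamma_0$ avoids the periodic orbits in $B$. Since $S$ is a Birkhoff section and $B \cap \partial S$ would consist of the boundary orbits of $S$ lying in $B$, away from those orbits $\Gamma_0$ is transverse to $X$; but an invariant line $L_k$ is a flow line, so a transverse curve can only meet it in isolated points, and by continuity the transverse curves $\Gamma_s^{\pm} = T_s^{\pm}\cap S$ on nearby tori converge to $\Gamma_0$. The key structural input is that on the annular pieces of $B$ the curve $\Gamma_0$ is the limit of the $\Gamma_s^{\pm}$, which by Lemma \ref{lem:invarianthomology} and Lemma \ref{lem:mntype} are unions of parallel closed curves of a fixed slope on each torus. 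Hence on each open annulus $A$ of $B$, the restriction $\Gamma_0 \cap A$ is a disjoint union of properly embedded arcs and/or circles, and crucially the arcs hit the two boundary circles of $A$ (which are among the $L_k$) the same number of times at corresponding points — this is exactly the matching of homology classes of the two limiting families that was already observed in the proof of Lemma \ref{lem:rectanglerepre}. Assembling these annular pieces according to the equivalence relations, the arcs glue up end-to-end: each time $\Gamma_0$ meets an invariant line $L_k$, the annulus on the other side of $L_k$ contributes an arc continuing it. So $\Gamma_0$ traced in the rectangle is a finite union of closed loops and the horizontal ``winding'' is consistent, i.e. it closes up — I would spell this out by a counting/parity argument on the bipartite-type incidence between arcs and the identified vertical lines, using that every vertical line in the representation appears with even total degree (as noted in Lemma \ref{lem:rectanglerepre}).

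Once $\Gamma_0$ is a closed $1$-manifold in the rectangle-with-vertical-lines, it has a well-defined homology class in the torus $\hat T$ gotten by forgetting the equivalence relations: set $p_0 = \mathbf{Ins}(\Gamma_0, v_b)$ and $q_0 = \mathbf{Ins}(\Gamma_0, h_b)$ for a vertical line $v_b$ and a horizontal line $h_b$ in the representation, exactly as in the statement. To see this is independent of the chosen rectangle representation, note that any two representations from Lemma \ref{lem:rectanglerepre} differ by reordering the invariant lines and re-cutting, which is realized by an orientation-preserving self-homeomorphism of $B$ preserving the (unoriented) horizontal/vertical directions; more concretely, $q_0$ is just the total number of intersections of $\Gamma_0$ with the single horizontal line, which is intrinsic to $B$, and $p_0$ can be computed instead as the sum over the annular pieces of the (consistently oriented) number of arcs crossing each piece — a quantity manifestly independent of how we unrolled $B$ into a rectangle. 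Alternatively, and cleanly, I would transfer the question to a nearby genuine invariant torus: for small $\epsilon$, $\Gamma_0$ is homologous in a collar neighborhood of $B$ to $\Gamma_\epsilon^{+} = T_\epsilon^{+}\cap S$ union the appropriate limit pieces, and the homology class of the latter on $T_\epsilon^{+}$ is well-defined and equals $(p_0,q_0)$ by Lemma \ref{lem:invarianthomology}; this pins the invariant down without reference to a rectangle at all.

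The main obstacle I anticipate is the gluing/closing-up step: showing that after identifying the vertical lines the arcs of $\Gamma_0 \cap A$ over the various annuli actually concatenate into closed curves rather than terminating on an invariant line. This is where I must use that $\Gamma_0$ is a transverse intersection curve coming from a Birkhoff section — so it is the boundary-free limit of the honest closed curves $\Gamma_s^{\pm}$ on both sides — together with the even-degree property of the incidence graph of annuli and invariant lines established in Lemma \ref{lem:rectanglerepre}; a curve that ``ended'' on an invariant line would force a transverse curve on a nearby torus to have a boundary or to fail transversality near a flow line, contradicting that the $T_s^{\pm}$ are transverse to $X$ and $S$ is a section. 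Once closedness is in hand, well-definedness of $(p_0,q_0)$ and the stated intersection-number characterization are essentially bookkeeping, most safely done via the reduction to a nearby invariant torus.
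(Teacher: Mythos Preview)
Your overall plan is sound and parallels the paper's: reduce to showing that $\Gamma_0$, drawn in the rectangle representation, is an honest closed curve, after which the intersection-number definition of $(p_0,q_0)$ is immediate. But your resolution of the closing-up step---which you correctly flag as the main obstacle---is roundabout and not fully justified. You appeal to limits of the $\Gamma_s^{\pm}$, to the even-degree property of the incidence graph, and to a vague ``failure of transversality near a flow line'' argument; none of these cleanly rules out the actual bad scenario, namely an arc of $\Gamma_0$ in a single open annulus $\mathbb{A}$ having \emph{both} endpoints on the same boundary line $v_1$.

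The paper disposes of exactly this with a one-line sign argument you are missing: orient the rectangle so both coordinates increase along the flow. Then every intersection of $\Gamma_0=S\cap B$ with any invariant vertical line $v_1$ has the \emph{same} sign, because $S$ is positively transverse to $X$ and $X$ points in a fixed direction along $v_1$. An arc in $\mathbb{A}$ with both endpoints on $v_1$ would contribute one positive and one negative intersection with $v_1$---contradiction. Hence each arc runs from $v_1$ to the other boundary $v_2$, and concatenating through the annuli in the rectangle order produces a closed curve in the usual sense. Once you have this, your limit-from-nearby-tori machinery and the graph/parity bookkeeping are unnecessary; the intersection numbers $\mathbf{Ins}(\Gamma_0,v_b)$ and $\mathbf{Ins}(\Gamma_0,h_b)$ are then well-defined for the standard reason that any two vertical (resp.\ horizontal) lines in the rectangle are homologous.
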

\begin{proof}
	Assume $B$ is a broken torus with invariant vertical lines. Choose a rectangle representation of $B$ such that both coordinates increase along the flow. It suffices to show that $\Gamma_0$ is a closed curve as in an ordinary rectangle without additional equivalence relations between vertical lines.
	
	Consider an open annulus $\mathbb{A}$ with two boundaries given by two vertical lines $v_1$ and $v_2$. Since $v_1$ is transverse to $S$, in neighbourhood of the intersection point, $S\cap \mathbb{A}$ is a transverse curve with a boundary to be the intersection point in $v_1$. If the other boundary of $S\cap \mathbb{A}$ also lies in $v_1$, this would create both a positive and a negative intersection in $v_1$, which is a contradiction. Thus the other boundary point of $S\cap \mathbb{A}$ lies in $v_2$. 
	
	In conclusion, $\Gamma_0$ is composed of a intersection point $v_1\cap S$ followed by a transverse curve in an open annulus, then another intersection point $v_2\cap S$ and another transverse curve in the next open annulus, and so on. This process ensures that $\Gamma$ traverses all the open annuli and invariant vertical lines, ultimately forming a closed curve, just as in the standard case. Thus we can define the homology type of $\Gamma_0$ in the usual way. Specifically, $\Gamma_0$ is of $(p_0,q_0)$-type if and only if $\mathbf{Ins}(\Gamma_0,v_b)=p_0$ and  $\mathbf{Ins}(\Gamma_0,h_b)=q_0$.
\end{proof}

On $T=S^1\times S^1$, there are  two natural generators of the first homology, corresponding to the two $S^1$ factors. The ideal situation is that natural generators of different families coincide on a broken torus. In such a case, we would have a consistent framework for defining the homology type of closed curves across different families of tori and on the broken torus itself. 

However, it is possible for discrepancies to arise. For instance, consider a scenario where a family of $(1,1)$-type curve converges to an invariant line on $B$ from the "inside", while another family of $(1,0)$-type converges to the same invariant line from the "outside". Then there is a non-trivial correspondence between the natural generators of different families of invariant tori. Fortunately, this does not present an obstacle to our construction.

Let $h_b$ and $v_b$ be a horizon line and a vertical line in the rectangle representation. Then $h_b$ and $v_b$ actually defines two generators of every nearby family of tori, with a correspondence to the natural generators. Every transverse curve on an invariant torus in neighbourhood of $B$ can be uniquely represented as a combination of $h_b$ and $v_b$. Therefore, when defining the homology type of closed curves in neighbourhood of $B$, we can use $h_b$ and $v_b$ as the two generators. 

\textbf{For simplicity, we additionally assume that the natural generators in different families of invariant tori coincide. Thus there is consistent way to define the homology of transverse curves on each invariant torus.}

Under the same notations in Lemma \ref{lem:twofamily}, let $\Gamma_i^{-\epsilon}=S\cap T_i^{-\epsilon}$ and $\Gamma_j^{\epsilon}=S\cap T_j^{\epsilon}$. For each family of tori, the homology type of intersection curve is constant. Let $(p_i^-,q_i^-)$ and $(p_j^+,q_j^+)$ be the homology type of $\Gamma_i^{-\epsilon}$ and $\Gamma_j^\epsilon$, respectively. We have the following proposition.

\begin{proposition}\label{pro:homology}
	Let $\Gamma_0=B\cap S$ be a transverse curve of $(p_0,q_0)$-type. If $B$ is a broken torus with invariant vertical lines, then we have
	\begin{equation}\label{eq:pijp0}
		p_i^-=p_j^+=p_0\ne 0, \quad \sum_{i=1}^k q_i^-=\sum_{j=1}^{l}q_j^+=q_0,
	\end{equation}
	where $i\in \{1,\dots k\}$ and $j\in \{1,\dots l\}$.

	Similarly, if $B$ is a broken torus with invariant horizon lines, then we have
	\begin{equation}\label{eq:qijq0}
		\sum_{i=1}^k p_i^-=\sum_{j=1}^{l}p_j^+=p_0, \quad 	q_i^-=q_j^+=q_0\ne 0.
	\end{equation}
\end{proposition}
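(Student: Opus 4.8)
The plan is to exploit the fact that each family of tori $T^{-\epsilon}_i$ and $T^{\epsilon}_j$ degenerates onto the broken torus $B$, so that the transverse curves $\Gamma_i^{-\epsilon}$ and $\Gamma_j^{\epsilon}$ must converge (as $1$-dimensional submanifolds) to subsets of $\Gamma_0 = B \cap S$. Concretely, I would fix the rectangle representation of $B$ from Lemma \ref{lem:rectanglerepre} with invariant vertical lines $v_1, \dots$ corresponding to periodic orbits, and with both coordinates increasing along the flow; by Lemma \ref{lem:homologyonbro} the curve $\Gamma_0$ is a genuine closed curve in this rectangle of well-defined type $(p_0,q_0)$. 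The key structural input is that in a neighbourhood of $B$ each family $T^{-\epsilon}_i$ limits onto exactly one open annulus $\mathbb{A}_i$ of $B$ together with its two bounding vertical lines (and similarly for $T^{\epsilon}_j$), and these annuli exhaust $B$ on each side — this is precisely the ``inside/outside'' dichotomy of Lemma \ref{lem:twofamily}.

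The heart of the argument is a local analysis near a single vertical line $v$ of $B$. Since $v$ is a periodic orbit transverse to $S$, the intersection $v \cap S$ consists of finitely many points, each with a definite sign; as argued in Lemma \ref{lem:homologyonbro} all these signs agree (otherwise an open annulus abutting $v$ would carry a transverse arc with both a positive and a negative endpoint on $v$, impossible). Near such a point the surface $S$ is a disk transverse to the flow, and it meets each of the two open annuli adjacent to $v$ (one on the ``inside'', one on the ``outside'') in a single transverse arc emanating from that point. Pushing off $v$ by a small $\epsilon$, the tori $T^{-\epsilon}_i$ and $T^{\epsilon}_j$ sitting on the two sides inherit transverse curves whose arcs across the thin collar near $v$ are $C^1$-close to the arc of $\Gamma_0$ through that point. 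Counting intersections with $v_b$ (a vertical line) picks out the number of times $\Gamma_0$ wraps horizontally, and this count is visible identically on the inside tori and the outside tori and on $B$ itself: hence $p_i^- = p_0 = p_j^+$ for every $i,j$. The nonvanishing $p_0 \ne 0$ follows because if $p_0 = 0$ then $\Gamma_0$ would be disjoint (up to homology) from the vertical lines, forcing some connected component of $\Gamma_0$ to be a $(0,q)$-curve lying inside a single open annulus and parallel to the core — but the core is a flow line on a limiting annulus, so such a curve cannot be transverse to the flow everywhere (a mean-value-theorem argument as in Lemma \ref{lem:pqtransversecurve}).

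For the vertical counts I would use the additivity of intersection number over the annuli. A horizontal line $h_b$ in the rectangle crosses $B$ from one vertical line to the next, passing through each open annulus of $B$ in turn; restricting $\Gamma_0$ to the $i$-th annulus $\mathbb{A}_i$ gives the arc system that the family $T^{-\epsilon}_i$ converges to, and its intersection number with $h_b \cap \mathbb{A}_i$ equals $q_i^-$ (the vertical wrapping number of $\Gamma_i^{-\epsilon}$, which is constant in the family by Lemma \ref{lem:invarianthomology}). Summing over the annuli that make up the ``inside'' decomposition of $B$ yields $\sum_{i=1}^{k} q_i^- = \mathbf{Ins}(\Gamma_0, h_b) = q_0$, and summing over the ``outside'' decomposition gives $\sum_{j=1}^{l} q_j^+ = q_0$. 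The horizontal-line case is identical after swapping the roles of the two coordinates, giving \eqref{eq:qijq0}.

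I expect the main obstacle to be making rigorous the claim that the transverse curves on the degenerating tori converge, as embedded $1$-manifolds, to the transverse curve on $B$ in a way that respects the annulus-by-annulus decomposition — i.e. that no ``mass'' of $\Gamma_i^{-\epsilon}$ escapes to a different annulus or gets lost in the collar near a vertical line as $\epsilon \to 0$. The right tool is a normal-form / tubular-neighbourhood description of $B$: near each vertical line choose flow-box coordinates in which $S$ is a graph, and near the interior of each annulus use the product structure of the limiting family of tori; then $C^1$-continuity of $S$ forces the homology bookkeeping, and the intersection numbers (being discrete, hence locally constant) transfer from $B$ to all sufficiently nearby tori. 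Everything else is linear algebra on $\mathbb{Z}^2$ and the bookkeeping of the directed-graph structure of $B$ already set up in Lemma \ref{lem:rectanglerepre}.
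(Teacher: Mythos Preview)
Your approach is essentially the paper's: both argue by continuity of intersection numbers as the nearby tori degenerate onto $B$, using that $\Gamma_i^{\pm\epsilon}\to\Gamma_0$ so that intersection with a vertical line gives $p_i^\pm=p_0$ (with $p_0\ne 0$ by transversality), while a horizontal line $h_B$ on $B$ splits as $\lim \bigcup h_i^{\pm\epsilon}$ so that the $q$-counts add up to $q_0$. One imprecision worth flagging: a single family $T_i^{-\epsilon}$ need not limit onto \emph{exactly one} open annulus of $B$ (in the directed-graph picture of Lemma~\ref{lem:rectanglerepre} a face may have several edges), but this does not affect your additivity argument since by Lemma~\ref{lem:twofamily} each annulus is still covered exactly once from each side.
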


\begin{proof}
	The proof for two cases is similar. Here, we only prove the case when $B$ has invariant vertical lines. We choose a rectangle representation of $B$ such that both coordinates increase along the flow. 
	
	By Lemma \ref{lem:twofamily},
	\[\Gamma_0=\bigcup_{i=1}^k\lim_{\epsilon\rightarrow 0}\Gamma^{-\epsilon}_i=\bigcup_{i=j}^l\lim_{\epsilon\rightarrow 0}\Gamma^{\epsilon}_j.\]
		
	Let $v_1$ and $v_2$ be two invariant vertical lines that serves as the boundaries of an open annulus(it's possible that $v_1=v_2$). Recall that an open annulus of $B$ is part of the limiting set given by a family of tori. Without loss of generality, let's assume it to be $T_1^\epsilon$ from the previously mentioned families. And $v_1,v_2$ is the limit of two families of vertical lines on the tori. By our assumption, the intersection number of $\Gamma_1^\epsilon$ with any vertical line is the constant $p_1^+$. Taking the limit, we have 
	\[p_0=\mathbf{Ins}(\Gamma_0,v_1)=\mathbf{Ins}(\Gamma_0,v_2)=p_1^+.\]
	Since $\Gamma_0$ is transverse to the flow, $p_0$ can not be $0$. Note that $v_1$ and $v_2$ can be any adjacent invariant vertical lines. We have prove the first part of \eqref{eq:pijp0}.
	
	Choose a broken horizon line $h_B\subset B$ such that the intersections of $\Gamma_0$ and $h_B$ are away from invariant vertical lines. By \eqref{eq:twofamily}, there exist two sets of families of horizon lines $h_i^{-\epsilon}\subset T_i^{-\epsilon}$ such that 
	\[h_B=\lim_{\epsilon\rightarrow 0}\bigcup_{i=1}^k h_i^{-\epsilon}=\lim_{\epsilon\rightarrow 0}\bigcup_{j=1}^l h_j^{\epsilon}.\]
	By definition, $q_i^-=\mathbf{Ins}(h_i^{-\epsilon},\Gamma_i^{-\epsilon})$, $q_j^+=\mathbf{Ins}(h_j^{\epsilon},\Gamma_j^{\epsilon})$ and $q_0=\mathbf{Ins}(h_B,\Gamma_0)$. By taking limit, we have
	\[q_0=\sum_{i=1}^k q_i^-=\sum_{i=j}^l q_j^+.\]
	This complete the proof of the proposition.
	
\end{proof}

\section{Transverse curves near the boundary orbits}\label{sec:4}
When a family of transverse curves in $S$ converges to a curve which fails to remain transversal, the non-transversal part must lie in $\partial S$. In this section, we focus on studying the behavior of transverse curves near the boundary orbits. 

If the family of invariant tori terminates at a periodic orbit, it may not be possible to extend the family of transverse curves continuously to the periodic orbit while maintaining transversality. In such cases, the periodic orbit must be a boundary orbit. In rectangle coordinates, when approaching the periodic orbit, a pair of opposite sides of the rectangle are progressively compressed until they collapse into a single line. The torus then transforms into an invariant vertical or horizontal line. After continuously extending the transverse curve to the periodic orbit, the following lemma becomes evident.
\begin{lemma}\label{lem:endoftori}
	Let $T_\epsilon(\epsilon>0)$ be a family of tori that ends in a periodic orbit as $\epsilon\rightarrow 0$. Suppose the family of transverse curves on $T_\epsilon$ is of $(p,q)$-type. 
	
	If $T_\epsilon(\epsilon>0)$ ends in an invariant vertical line, the family of transverse curves can be extended transversely to the periodic orbits if and only if $q=0$. Otherwise, the boundary curve is a $|q|$-fold covering of the periodic orbit.
	
	Similarly, if $T_\epsilon(\epsilon>0)$ ends in an invariant horizon line, the family of transverse curves can be extended transversely to the periodic orbits if and only if $p=0$. Otherwise, the boundary curve is a $|p|$-fold covering of the periodic orbit.
\end{lemma}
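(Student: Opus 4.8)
\emph{Plan and set-up.} I would treat the statement as a degree computation in a local model near the limiting orbit $\gamma$: first pin down which homological coordinate of $\Gamma_\epsilon:=S\cap T_\epsilon$ records the winding \emph{along} $\gamma$, then read off the $\epsilon\to 0$ limit of $\Gamma_\epsilon$ from the degree of the map it induces onto $\gamma$. Concretely, since $M$ is oriented and $\gamma$ is a periodic orbit of a non-vanishing flow, it has a trivial tubular neighbourhood $N\cong S^1\times D^2$ with $\gamma=S^1\times\{0\}$; after reparametrising the family and applying an isotopy of $N$ fixing $\gamma$, I may assume $T_\epsilon=S^1\times\partial D_\epsilon$ for small $\epsilon>0$, so $T_\epsilon\to\gamma$ in the Hausdorff sense. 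In the rectangle picture the longitude $\ell=S^1\times\{\mathrm{pt}\}$ is isotopic to $\gamma$, while the meridian $m=\{\mathrm{pt}\}\times\partial D_\epsilon$ bounds a disk in $N$ and shrinks to a point; the hypothesis that the family ``ends in an invariant vertical line'' says exactly that $\ell$ is the vertical generator and $m$ the horizontal one, so a $(p,q)$-type curve winds $p$ times around $m$ and $q$ times around $\gamma$ --- it is the vanishing of $q$ that must be characterised. By Lemma~\ref{lem:invarianthomology} the type of $\Gamma_\epsilon$ is the constant $(p,q)$, with $(p,q)\neq(0,0)$ by Lemma~\ref{lem:pqtransversecurve} (note $\Gamma_\epsilon\neq\emptyset$, since every orbit meets $\iota(S)$ and stays in its torus); by Lemma~\ref{lem:mntype}, $\Gamma_\epsilon$ has $d=\gcd(p,q)$ disjoint components, each of type $(p/d,q/d)$.

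\emph{The case $q\neq 0$.} I would fix a continuous-in-$\epsilon$ parametrisation of one component $\Gamma_\epsilon^{(i)}:S^1\to T_\epsilon\subset N$. Because $T_\epsilon\to\gamma$, these converge in $C^0$ to a loop in $\gamma$; composing with the projection $N\to\gamma\cong S^1$ gives a continuous family of maps $S^1\to S^1$ of constant degree $q/d\neq 0$, so the limit loop also has degree $q/d$ onto $\gamma$ and hence sweeps out all of $\gamma$. Thus each component accumulates onto a $|q|/d$-fold cover of $\gamma$, and the $d$ pages $\Gamma_\epsilon$ together spiral into $\gamma$ with total multiplicity $|q|$. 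Since $\gamma$ is a flow line it is nowhere transverse to $X$, so by Definition~\ref{def:birkhoffsection} the part of $\overline{\bigcup_{\epsilon>0}\Gamma_\epsilon}$ lying on $\gamma$ must belong to $\iota(\partial S)$; therefore $\gamma$ is a boundary orbit, the boundary curve there is a $|q|$-fold covering of $\gamma$, and the family of transverse curves cannot be extended transversely across $\gamma$.

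\emph{The case $q=0$, and conclusion.} Here $\Gamma_\epsilon$ is a disjoint union of $|p|$ meridians with $p\neq 0$. The flow on $\gamma$ is tangent to $\gamma$, hence has nonzero component along the $S^1$-factor, so for small $\epsilon_0$ every slice $\{\mathrm{pt}\}\times D_{\epsilon_0}$ is transverse to $X$; I would then take the $\Gamma_\epsilon$ to be (small perturbations of) the circles $\{\mathrm{pt}_1\}\times\partial D_\epsilon,\dots,\{\mathrm{pt}_{|p|}\}\times\partial D_\epsilon$, which are transverse to $X$ and of type $(p,0)$, so the pages shrink to the finite set $\{\mathrm{pt}_1,\dots,\mathrm{pt}_{|p|}\}\times\{0\}\subset\gamma$ as $\epsilon\to 0$. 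Then $\overline{\bigcup_\epsilon\Gamma_\epsilon}$ is an immersed surface meeting $\gamma$ transversally in finitely many points, so the family has been extended transversely and $\gamma$ need not be a boundary orbit. Together with the previous case this proves the vertical statement; the horizontal statement follows verbatim after swapping $p\leftrightarrow q$ and ``vertical'' $\leftrightarrow$ ``horizontal''.

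\emph{Main obstacle.} The degree bookkeeping is routine; the delicate point is the geometry at the boundary orbit when $q\neq 0$, namely checking that the nested transverse curves $\Gamma_\epsilon$ glue with $\gamma$ into an \emph{immersed} compact surface whose boundary is precisely the $|q|$-fold cover of $\gamma$ --- i.e. that $S$ has the standard ``open-book page'' local form along $\gamma$. This requires control of the slope and angular spacing of $\Gamma_\epsilon$ in $\partial D_\epsilon$ as $\epsilon\to 0$, not merely of its homology class, and this is where the smoothness of $X$ and the regularity of the toric foliation enter.
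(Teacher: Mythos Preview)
Your argument is correct and is essentially a rigorous unpacking of what the paper leaves implicit: the paper does not prove this lemma at all, but merely says that ``in rectangle coordinates, when approaching the periodic orbit, a pair of opposite sides of the rectangle are progressively compressed until they collapse into a single line \ldots\ the following lemma becomes evident.'' Your tubular-neighbourhood model $N\cong S^1\times D^2$ with $T_\epsilon=S^1\times\partial D_\epsilon$ is exactly this collapse picture made precise, and your degree computation for the projection $N\to\gamma$ is the honest version of ``the horizontal direction shrinks, leaving a $|q|$-fold cover of the vertical line.''

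Two minor remarks. First, your phrase ``winds $q$ times around $\gamma$'' is potentially confusing, since in knot-theoretic language this would usually mean meridional (linking) winding; what you actually use, and what is correct, is that the projection onto the $S^1$-factor has degree $q/d$ per component. Second, the ``main obstacle'' you flag --- that the pages glue into an \emph{immersed} surface with the standard open-book local form along $\gamma$ --- is real but is also glossed over by the paper; it is a standard local model (blow down the boundary of the solid torus to the core), and in the context of the paper one only needs the topological statement about the covering multiplicity, which your degree argument already gives. So your proof is strictly more complete than the paper's own treatment.
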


\begin{proposition}\label{pro:necefordisk}
	Let $T$ be an invariant torus such that $\partial S\cap T=\emptyset$ and $\Gamma=S\cap T$ is of $(p,q)$-type. Denote $M^+$ and $M^-$ to be the two connected components of $M\backslash T$. If both $p,q$ are non-zero, then $\partial S\cap M^+\ne \emptyset$ and $\partial S\cap M^-\ne \emptyset$.
\end{proposition}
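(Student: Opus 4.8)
The plan is to argue by contradiction: assume $\partial S\cap M^+=\emptyset$ and deduce that $(p,q)$ must have a vanishing coordinate. Two facts are immediate from this assumption. First, every invariant torus $T'\subset\overline{M^+}$ meets $S$ in a transverse curve: $T'$ carries orbits and every orbit meets $S$, so $T'\cap S\neq\emptyset$, and at a point $x\in T'\cap S$ the vector $X_x$ lies in $T_xT'$ but not in $T_xS$ (otherwise $x\in\partial S\cap M^+=\emptyset$), whence the tangent line $T_x(S\cap T')=T_xS\cap T_xT'$ does not contain $X_x$. Applying this to $T'=T$ shows in particular that $S$ is transverse to $T$, so $S^+:=S\cap\overline{M^+}$ is a properly embedded compact surface with $\partial S^+=S\cap T=\Gamma$. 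Second, by Lemma \ref{lem:invarianthomology} the homology type of $S\cap T'$ is constant along each family of invariant tori in $\overline{M^+}$, and it equals $(p,q)$ on the family through $T$.

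The first step — and, for the applications in Section \ref{sec:application}, the only one needed — is a propagation argument. I would follow the family $\mathcal F_0$ of invariant tori through $T$ as it is pushed into $M^+$, written as $\{T_s\colon s\in[0,b_0)\}$ with $T_0=T$, so that $S\cap T_s$ has constant type $(p,q)$. By compactness of $M$ and the definition of a regular invariant toric foliation, as $s\to b_0$ the tori accumulate on a connected component of $M\setminus O$ lying in $M^+$, either a periodic orbit $\gamma$ or a regular broken torus $B_0$. If it is a periodic orbit $\gamma$, then $\gamma$ is a vertical or a horizontal line, and since $p,q$ are both non-zero, Lemma \ref{lem:endoftori} shows the transverse curves cannot be extended transversely across $\gamma$; hence $\gamma$ is a multiply covered boundary orbit contained in $M^+$, a contradiction. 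If it is a broken torus $B_0$, say of vertical-line type, Proposition \ref{pro:homology} forces the first coordinate of the type to equal the same non-zero integer $p$ on every family adjacent to $B_0$ (the second coordinates adding correctly on each side); I would then pass to a family on the far side of $B_0$, still inside $M^+$ because $B_0\subset M^+$, whose type again has first coordinate $p\neq0$, and repeat. Since a non-zero coordinate is preserved along this chain, the first periodic orbit it reaches is, by Lemma \ref{lem:endoftori} as above, a boundary orbit in $M^+$ — a contradiction. When $M$ has only finitely many broken tori (as for the boundaries of toric domains, which have none, and the separable Hamiltonian level sets, which have finitely many) the chain is finite and the argument is complete.

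For arbitrarily many broken tori I would recast this mechanism homologically. Put $W=\overline{M^+}$, a compact orientable $3$-manifold with $\partial W=T$; then $[\partial S^+]=\pm(p,q)\in H_1(T;\mathbb Z)$, which is non-zero because $p,q\neq0$. From the long exact sequence of $(W,\partial W)$, $(p,q)$ lies in $\ker\bigl(i_*\colon H_1(T;\mathbb Z)\to H_1(W;\mathbb Z)\bigr)$, a subgroup which by the standard half-lives-half-dies lemma has rank one. Using Proposition \ref{pro:homology} and Lemma \ref{lem:endoftori} along $\mathcal F_0$, exactly as in the previous step, one would identify this kernel with one of the two natural generators: tracking which coordinate must stay non-zero, Lemma \ref{lem:endoftori} shows that a periodic orbit collapsing the preserved direction can never be reached, while Proposition \ref{pro:homology} carries this coordinate across broken tori; consequently the complementary generator, say $g_2$, becomes rationally null-homologous in $W$. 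Hence $(p,q)$ is a multiple of $g_2$, so $q=0$ — the desired contradiction. The symmetric statement $\partial S\cap M^-\neq\emptyset$ then follows by running the same argument on $M^-$.

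The step I expect to be the main obstacle is this last one: identifying $\ker i_*$ in full generality, i.e. proving that the vanishing cycle of $T$ in $\overline{M^+}$ is always a coordinate class $g_1$ or $g_2$. This uses the standing assumption that the natural generators on different families coincide, and requires an induction over the (possibly infinite) tree of families of invariant tori in $M^+$: one propagates, via Proposition \ref{pro:homology} at broken tori and Lemma \ref{lem:endoftori} at periodic orbits, the information of which coordinate of the type is forced to remain non-zero, while simultaneously exhibiting a rational $2$-chain bounding the complementary generator. The bookkeeping is transparent when the number of broken tori is finite; making it airtight in general is where the real work lies, and everything else in the argument is elementary.
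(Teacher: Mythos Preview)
Your propagation argument is the paper's approach, but it has a gap. When you cross a vertical broken torus $B_0$, you correctly note that the first coordinate equals $p$ on every adjacent family, and then pass to \emph{an arbitrary} family on the far side. But that family may well have type $(p,0)$: Proposition \ref{pro:homology} only says the \emph{sum} of the second coordinates on the far side equals $q$, not that each one is non-zero. If such a $(p,0)$-family later terminates at a vertical-line periodic orbit, Lemma \ref{lem:endoftori} gives no contradiction---the curves extend transversely there---so your chain can simply die without producing a boundary orbit. The sentence ``since a non-zero coordinate is preserved along this chain, the first periodic orbit it reaches is \ldots\ a boundary orbit'' is therefore not justified: which coordinate is preserved depends on the type of broken torus at each step, and you have not ensured that the \emph{relevant} coordinate for the terminal periodic orbit is non-zero.

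The paper closes this gap with a one-line observation: since $\sum_j q_j^{+}=q\neq 0$, at least one family on the far side of a vertical broken torus has \emph{both} coordinates non-zero, and one follows that family (symmetrically at horizontal broken tori). Then both coordinates remain non-zero at every step of the chain, and Lemma \ref{lem:endoftori} forces a boundary orbit at the terminal periodic orbit regardless of whether it is a vertical or a horizontal line. With this correction your argument coincides with the paper's, which likewise assumes the chain terminates after finitely many broken tori. Your homological recasting via half-lives--half-dies is an interesting alternative route, but, as you acknowledge, identifying $\ker i_*$ with a coordinate class is the entire content and is not carried out; the paper makes no use of it, and once the propagation argument is repaired it is unnecessary.
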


\begin{proof}
	If $\partial S\cap M^+= \emptyset$, then $S$ must be transverse to all tori in $M^+$. As the invariant tori vary, the homology type of transverse curve remains unchanged until a broken torus appears. By Proposition \ref{pro:homology}, whenever passing through a broken torus, at least one family of tori persists, with the homology type of its transverse curves having both components non-zero. 
	
	Eventually, all families of tori terminate in periodic orbits after passing through finitely many broken tori. By Lemma \ref{lem:endoftori}, the family of tori with both non-zero homology components for the transverse curves must end at a boundary orbit of $S$. This contradicts to our assumption $\partial S\cap M^+= \emptyset$. Thus $\partial S\cap M^+\ne \emptyset$.
	
	Similarly, $\partial S\cap M^-\ne \emptyset$. Therefore, $S$ have at least two boundary orbits, i.e there is no disk-like Birkhoff section.
\end{proof}
\begin{corollary}\label{cor:necefordisk}
	Assume $S$ is a Birkhoff section with a unique boundary orbit and $\Gamma=S\cap T$ is a $(p,q)$-type transverse curve on the torus $T$. Then either $p=0$ or $q=0$.
\end{corollary}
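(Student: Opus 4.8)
The plan is to obtain Corollary \ref{cor:necefordisk} as an immediate consequence of Proposition \ref{pro:necefordisk}, by contraposition; essentially no new argument is required beyond careful bookkeeping of the boundary orbits.

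First I would observe that the hypothesis that $\Gamma=S\cap T$ is a $(p,q)$-type \emph{transverse} curve puts us in the setting of Proposition \ref{pro:necefordisk}, i.e.\ that $\partial S\cap T=\emptyset$. Indeed, if a boundary orbit of $S$ were contained in $T$, then $T$ would be a rational invariant torus and that periodic orbit would appear as a non-transverse component of $S\cap T$, contradicting the assumption that $\Gamma$ is a transverse curve; alternatively one simply reads $\partial S\cap T=\emptyset$ into the phrase ``$T$ is transverse to $S$''. Either way, the hypotheses of Proposition \ref{pro:necefordisk} are met.

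Next, I argue by contradiction: suppose that both $p\ne 0$ and $q\ne 0$. Writing $M\setminus T=M^+\sqcup M^-$ for the two connected components, Proposition \ref{pro:necefordisk} yields $\partial S\cap M^+\ne\emptyset$ and $\partial S\cap M^-\ne\emptyset$. Since $M^+$ and $M^-$ are disjoint open subsets of $M$, a periodic orbit of $X$ that meets $M^+$ is disjoint from any periodic orbit that meets $M^-$; hence the orbit components of $\iota(\partial S)$ detected on the two sides of $T$ are genuinely distinct periodic orbits, not merely distinct boundary components of the abstract surface $S$. This forces $S$ to have at least two distinct boundary orbits, contradicting the assumption that $S$ has a unique boundary orbit. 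Therefore $p=0$ or $q=0$.

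There is no real obstacle here, as the entire content is carried by Proposition \ref{pro:necefordisk} (whose own proof combines Proposition \ref{pro:homology} with Lemma \ref{lem:endoftori}). The only points deserving a sentence of justification are the reduction of the hypothesis to the case $\partial S\cap T=\emptyset$, and the remark that boundary orbits separated by $T$ must be distinct periodic orbits, which is what converts ``$\partial S\cap M^\pm\ne\emptyset$'' into ``at least two boundary orbits''.
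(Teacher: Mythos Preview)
Your proposal is correct and matches the paper's approach exactly: the corollary is stated immediately after Proposition~\ref{pro:necefordisk} with no separate proof, since it is just the contrapositive of that proposition together with the observation that boundary orbits in $M^+$ and $M^-$ are distinct. Your added remarks justifying $\partial S\cap T=\emptyset$ and the distinctness of the two orbits are appropriate but more detail than the paper itself provides.
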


If a Birkhoff section is transverse to all invariant tori and broken tori, then its boundary orbits must be some periodic orbits as the ends of families of tori. Proposition \ref{pro:homology} and Lemma \ref{lem:endoftori} provide sufficient tools for constructing Birkhoff sections of this type.

However, a Birkhoff section may fail to be transverse to an invariant torus. In such cases, there must be at least one boundary orbit on this invariant torus(or broken torus), implying that the invariant torus is rational torus filled with periodic orbits. Notably, all other flow lines, except for the boundary orbits, must intersect the surface transversely. Consequently, the intersection of the Birkhoff section with a non-transversal invariant torus consists of both boundary orbits and transverse curves. 

Clearly, in the neighborhood of a non-transversal invariant torus, there are two families of tori: one approaching from the "inside" and the other from the "outside". By continuity, the limit set of the transverse curves on nearby invariant tori is contained in the intersection of the Birkhoff section with the non-transversal invariant torus. However, the homology type of the transverse curves on the nearby invariant tori cannot be uniquely determined solely from the intersection curve on the non-transversal invariant torus.

The situation turns out to be quite complex due to the following two reasons: (i) Although the union of the limit sets of the two families of transverse curves contains the intersection curve on the non-transversal invariant torus, the precise nature of the limit set for each family of transverse curves remains unknown; (ii) The boundary orbit may be multi-covered, and the multiplicity of this covering is also indeterminate. 

In Fig.\ref{fig:threetypes}, three types of transverse curves near a non-transversal invariant torus are illustrated, where it is assumed that flow lines are given by vertical lines. After taking the limit, part of the transverse curve in Fig.\ref{fig:multi-covered} will converge to a $4$-fold covering of the boundary orbit. In Fig.\ref{fig:twoboundary}, part of the transverse curve will converge to two different boundary orbits. In Fig.\ref{fig:removable}, part of the transverse curve will converge to 
a boundary orbit. This case is called removable because the transverse curve can be deformed into a horizontal line, thereby eliminating this boundary orbit.

\begin{figure}
	\begin{subfigure}[t]{0.32\textwidth} 
		\includegraphics[width=\linewidth]{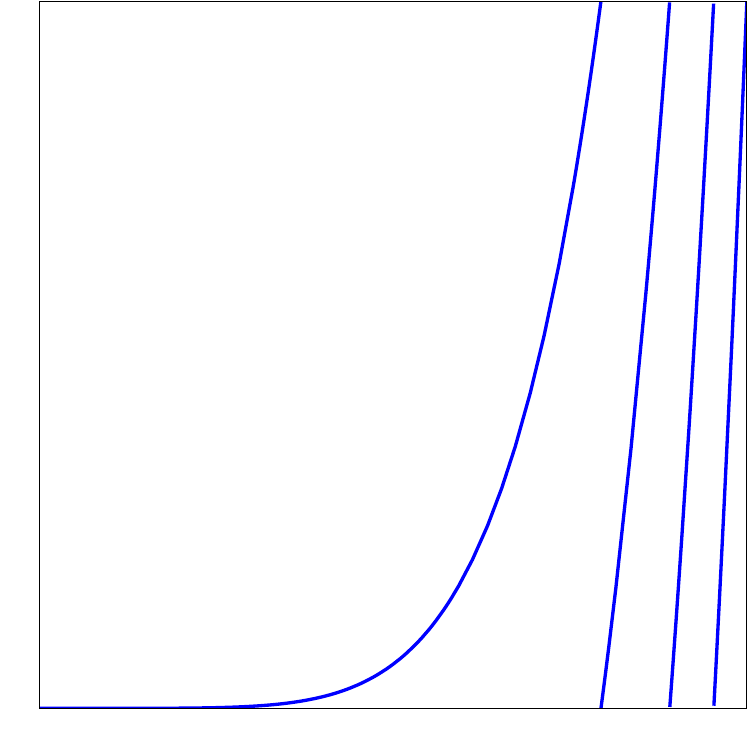}
		\caption{Multi-covered boundary orbit.}
		\label{fig:multi-covered}
	\end{subfigure}
	\hfill
	\begin{subfigure}[t]{0.32\textwidth} 
		\includegraphics[width=\linewidth]{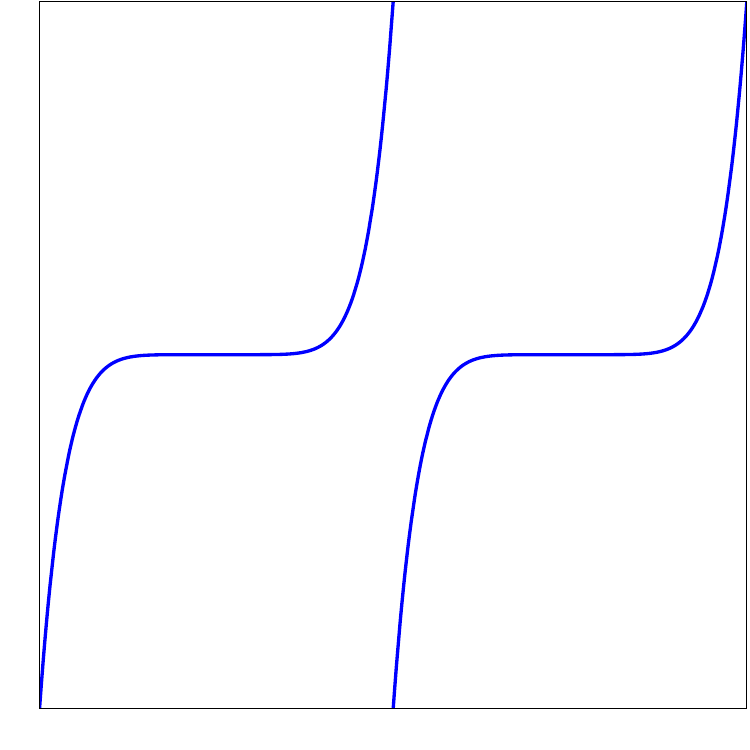}
		\caption{Two boundary orbits on an invariant torus.}
		\label{fig:twoboundary}
	\end{subfigure}
	\hfill
	\begin{subfigure}[t]{0.32\textwidth} 
		\includegraphics[width=\linewidth]{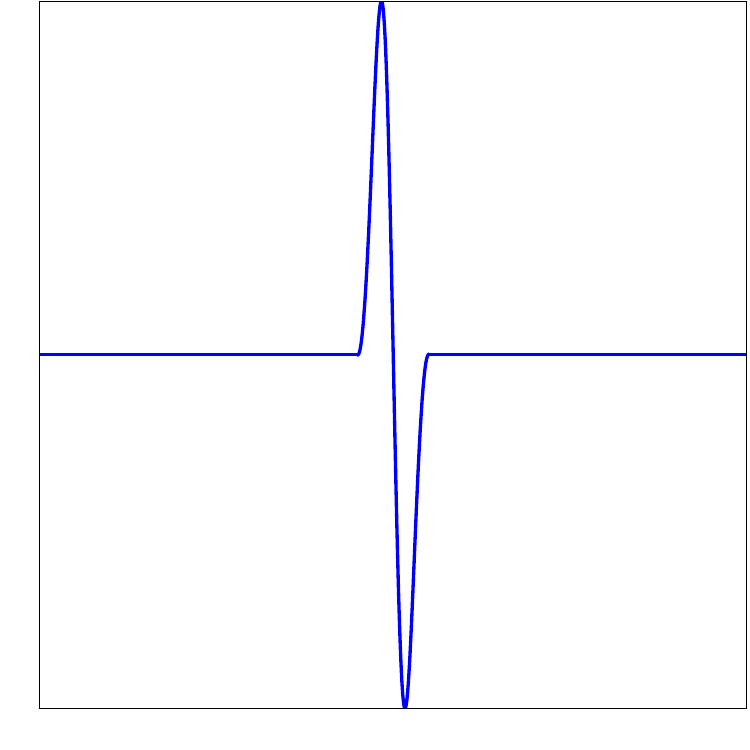}
		\caption{A removable boundary orbit.}
		\label{fig:removable}
	\end{subfigure}
	\caption{Two types of transverse curves near a non-transversal invariant torus.}
	\label{fig:threetypes}
\end{figure}

\textbf{For convenience, we introduce a new rectangle coordinates on the non-transversal torus, which is different from the one used to define homology type of closed curves:} Suppose we have a non-transversal torus with $(P,Q)$-type periodic orbits. By cutting the torus along a boundary orbit, we obtain an annulus where the two boundary circles correspond to the same periodic orbit. We regard this annulus as a rectangle with the top and bottom edges are identified, so that the vertical lines within the rectangle correspond to the periodic orbits on the torus. The left and right vertical edges represent the boundary orbit along which the torus was cut. We also assume that points on the left and right edges, at the same height, correspond to the same point on the periodic orbit. In this representation, a horizon line will form a closed curve.

In Fig.\ref{fig:newcoordinate}, a fundamental domain for the new rectangle coordinates is showed. The blue line segment corresponds to the boundary orbit of $(3,2)$-type, i.e. a vertical line at boundary. The red line segment corresponds to a horizon line, which is transverse to the flow and of $(1,1)$-type. It should be noted that there are different ways to define the fundamental domain for the new rectangle coordinates, as the homology of a horizon line can differ by multiples of $(P,Q)$.

Assume that a fundamental domain for the new rectangle coordinates has been chosen. Let $[v]$ and $[h]$ be the homology type of a vertical line and a horizon line, respectively. Clearly, we have $[v]=(P,Q)$. 
\begin{lemma}\label{lem:decomposition}
	Any $(p,q)$ can be uniquely expressed as a linear combination of $[v]$ and $[h]$. Moreover,  the coefficient corresponding to $[h]$ is independent of the choice of coordinates. 
\end{lemma}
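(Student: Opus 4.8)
The plan is to prove that $[v]$ and $[h]$ form a $\mathbb{Z}$-basis of $H_1(T;\mathbb{Z})\cong\mathbb{Z}\oplus\mathbb{Z}$; once this is established, existence and uniqueness of the decomposition are automatic, and the invariance of the $[h]$-coefficient is a short intersection-number computation.

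First I would compute $\mathbf{Ins}([v],[h])$. By construction of the new rectangle coordinates, the vertical lines are exactly the periodic orbits of the flow on the torus, while a horizon line runs monotonically across the fundamental domain from its left edge to its right edge; hence it meets each vertical line in exactly one point, and all of these intersections carry the same sign. Therefore $\mathbf{Ins}([v],[h])=\varepsilon$ for some $\varepsilon\in\{+1,-1\}$. Writing $[v]=(P,Q)$ and $[h]=(h_1,h_2)$, this says $Ph_2-Qh_1=\pm1$, so $\{[v],[h]\}$ is a basis of $\mathbb{Z}\oplus\mathbb{Z}$. Consequently every $(p,q)$ can be written uniquely as $a[v]+b[h]$ with $a,b\in\mathbb{Z}$. (In particular $[v]$ must be primitive, i.e. $\gcd(P,Q)=1$, as is necessarily the case for a periodic orbit of a non-vanishing flow.)

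For the last assertion, I would apply the bilinear intersection pairing $\mathbf{Ins}(\,\cdot\,,[v])$ to the identity $(p,q)=a[v]+b[h]$. Since $\mathbf{Ins}([v],[v])=0$, this gives $\mathbf{Ins}\big((p,q),[v]\big)=b\,\mathbf{Ins}([h],[v])=b\varepsilon$, hence $b=\varepsilon\,\mathbf{Ins}\big((p,q),[v]\big)$. Now $[v]=(P,Q)$, together with its orientation induced by the flow, depends only on the torus and not on the auxiliary coordinates; and if $[h]$ is replaced by the horizon class $[h]'$ of another admissible fundamental domain, then, as already noted in the text, $[h]'=[h]+m[v]$ for some $m\in\mathbb{Z}$, so $\mathbf{Ins}([h]',[v])=\mathbf{Ins}([h],[v])=\varepsilon$ because $\mathbf{Ins}([v],[v])=0$. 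Thus the right-hand side $\varepsilon\,\mathbf{Ins}\big((p,q),[v]\big)$ is unchanged, i.e. the coefficient of $[h]$ does not depend on the choice of coordinates.

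The whole argument is elementary once the geometric input is pinned down; the only step that needs genuine care is the claim $\mathbf{Ins}([v],[h])=\pm1$ — equivalently, that cutting $T$ along a single $(P,Q)$-orbit and re-identifying the two resulting boundary circles by the "same height" rule really recovers $T$, with a horizon line crossing each periodic orbit exactly once. This is precisely the content of the construction of the new rectangle coordinates, so after recalling that construction the remainder is just linear algebra over $\mathbb{Z}$.
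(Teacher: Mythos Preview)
Your proof is correct. The paper actually states this lemma without proof, treating it as an elementary consequence of the new rectangle coordinates just introduced; your argument supplies precisely the details the paper leaves implicit. The key geometric input you isolate---that a horizon line in the new coordinates meets each vertical line exactly once, hence $\mathbf{Ins}([v],[h])=\pm1$---is exactly what makes $\{[v],[h]\}$ a $\mathbb{Z}$-basis, and your intersection-pairing computation for the invariance of the $[h]$-coefficient is clean. One could also phrase the invariance more directly: writing $(p,q)=a[v]+b[h]$ and substituting $[h]=[h]'-m[v]$ gives $(p,q)=(a-bm)[v]+b[h]'$, so the $[h]$-coefficient is visibly unchanged; but this is equivalent to what you did.
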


\begin{figure}

	\centering
		\includegraphics[width=0.5\linewidth]{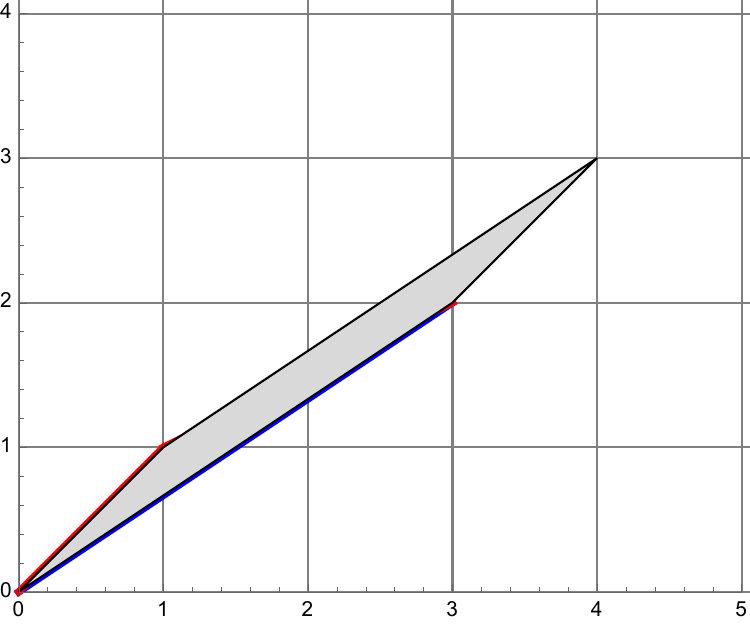}

	\caption{Fundamental domain for new rectangle coordinates with $P=3,Q=2$.}
	\label{fig:newcoordinate}
\end{figure}

Fig.\ref{fig:threetypes}  shows that different choices of transverse curves within the same homology type may result in different types of boundary orbits, which seems to be troublesome. Usually, we aim for the Birkhoff section to be as simple as possible. Therefore, it is preferable to eliminate as many unnecessary boundary orbits as possible.

By Corollary \ref{cor:homotopy}, it is possible to deform the transverse curves on nearby tori such that their limiting curve on the non-transversal invariant torus exhibit well-behaved properties.
\begin{lemma}\label{lem:limitcurve}
	For a family of transverse curves which converging to a non-transversal torus, the transverse curves can be deformed such that both coordinates vary monotonically along the curves in the new rectangle coordinates and there is a single boundary orbit on the non-transversal torus. Moreover, the limiting curve is some horizon lines and the boundary orbit(possibly multi-covered).
\end{lemma}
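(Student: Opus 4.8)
The plan is to write down an explicit normal form for the family $\Gamma_\epsilon=S\cap T_\epsilon$ near $T_0$ that realizes the asserted limiting picture, and then to argue that $S$ can be isotoped so as to put the family in that form. Fix a periodic orbit $\gamma\subset\partial S\cap T_0$ — this set is nonempty because $T_0$ is non-transversal — and work in the new rectangle coordinates obtained by cutting $T_0$ along $\gamma$, so that the vertical lines are precisely the flow lines (periodic orbits) of $T_0$, the left and right edges are $\gamma$, and the flow points in the $+y$-direction. Transporting these coordinates to each nearby $T_\epsilon$ through the product structure of a neighbourhood $T^2\times(-\delta,\delta)$ of $T_0$, the flow on $T_\epsilon$ becomes a linear flow of slope $\rho(\epsilon)$ that tends to the vertical as $\epsilon\to0^+$ but is never exactly vertical for $\epsilon\ne0$. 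By Lemma \ref{lem:invarianthomology} the homology class of $\Gamma_\epsilon$ is a fixed $(p,q)$, and by Lemma \ref{lem:decomposition} we write $(p,q)=m[v]+n[h]$ with $[v]=(P,Q)$; since $T_0$ is non-transversal, $(p,q)$ is not a multiple of $[v]$, so $n\ne0$. Put $l=\gcd(m,n)$ (interpreted as in Lemma \ref{lem:mntype} if $m=0$).

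The target form is this: each $\Gamma_\epsilon$ is a disjoint union of $l$ parallel embedded circles, each of class $\tfrac{m}{l}[v]+\tfrac{n}{l}[h]$ and built from monotone arcs of two kinds — an \emph{ascending arc} contained in a thin strip $\{0\le x\le\eta(\epsilon)\}$ around $\gamma$, along which $y$ climbs by $\tfrac{m}{l}$ while $x$ stays small, and \emph{horizontal arcs} along which $y$ is nearly constant and $x$ sweeps out the remaining $\tfrac{n}{l}$ units of winding — joined so that both coordinates are monotone along the whole circle. The horizontal arcs are manifestly transverse to the nearly-vertical flow of $T_\epsilon$; the ascending arc has slope of order $(m/l)/\eta(\epsilon)$, which we keep distinct from $\rho(\epsilon)$ and let tend to $\infty$ as $\epsilon\to0^+$ by choosing $\eta(\epsilon)\to0$, so it too is transverse and moreover converges to $\gamma$. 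Consequently $\lim_{\epsilon\to0^+}\Gamma_\epsilon$ consists of the limits of the horizontal arcs — a union of horizon lines — together with the orbit $\gamma$ traversed $|m|$ times, and the only non-transverse part of this limit is the single orbit $\gamma$.

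To see that $S$ can genuinely be isotoped (rel $\partial S$, through surfaces transverse to $X$) into this form, apply Corollary \ref{cor:homotopy}: on each $T_\epsilon$ the space of transverse curves of the fixed class $(p,q)$ is contractible and contains the normal-form representative, so, as $\epsilon$ varies, a representative can be selected continuously, and the $1$-parameter family it sweeps out is an embedded surface transverse to the flow which we adopt as the new $S$ near $T_0$. On $T_0$ the orbit $\gamma\subset\Gamma_0$ is frozen, but the transverse part of $\Gamma_0$ — a family of arcs of the open annulus $T_0\setminus\gamma$ — can be straightened to horizon lines, again by Corollary \ref{cor:homotopy} applied on $T_0\setminus\gamma$, compatibly with the isotopy just produced. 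Matching these up yields precisely the limiting curve asserted.

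The main obstacle is carrying out the last step honestly: producing one genuine isotopy of the embedded surface $S$, fixed on $\partial S$ and transverse to $X$ throughout, whose restriction near $T_0$ is the coherent deformation of the $\Gamma_\epsilon$ and whose limit onto $T_0$ matches the partly frozen curve $\Gamma_0$. In particular one must control the $\epsilon\to0^+$ behaviour uniformly so that no tangency with the flow of $T_\epsilon$ is created along the ascending arcs, no self-intersections appear among the $l$ parallel components, and the Hausdorff limit of $\Gamma_\epsilon$ is exactly ``horizon lines together with a multiply covered $\gamma$'' — so that all of the $[v]$-winding is absorbed into the one orbit $\gamma$ rather than distributed over several periodic orbits of $T_0$ (which would produce the extra boundary orbits of Figure \ref{fig:twoboundary}) or partially cancelled (the removable situation of Figure \ref{fig:removable}). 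The remaining verifications are bookkeeping with the decomposition of Lemma \ref{lem:decomposition} and the normal-form construction of Lemma \ref{lem:mntype}.
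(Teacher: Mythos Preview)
The paper states this lemma without proof; it is treated as a direct consequence of Corollary~\ref{cor:homotopy} (contractibility of the space of transverse curves of a fixed homology class) together with the decomposition of Lemma~\ref{lem:decomposition}. Your approach --- writing down an explicit monotone normal form in the new rectangle coordinates and invoking Corollary~\ref{cor:homotopy} to realize it continuously in $\epsilon$ --- is precisely the argument the paper leaves implicit, and you supply considerably more detail than the paper does.

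One point deserves correction: the phrase ``isotoped (rel $\partial S$)'' is too restrictive and is in tension with your own final paragraph. If the original $S$ has several boundary orbits on $T_0$ (the situation of Figure~\ref{fig:twoboundary}) or a removable one (Figure~\ref{fig:removable}), no isotopy fixing $\partial S$ can collapse them to the single orbit $\gamma$. The lemma should be read --- and the paper uses it this way in the proof of Proposition~\ref{pro:nontranstorus} --- as a statement about deforming the \emph{family of transverse curves} $\Gamma_\epsilon$ themselves, equivalently constructing a new transverse surface near $T_0$ with the prescribed homology data; this is free to change $\partial S\cap T_0$. Once the rel-$\partial S$ constraint is dropped, your normal-form construction goes through, and the transversality of the ascending arcs (keeping their slope on the correct side of $\rho(\epsilon)$ so that the positive-crossing sign matches that of the horizontal arcs) is the only genuine bookkeeping left, exactly as you flag.
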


\begin{proposition}\label{pro:nontranstorus}
	Let $T_s,s\in (-\epsilon,+\epsilon)$ be a family of invariant tori such that $(p^+,q^+)$ and $(p^-,q^-)$ are the corresponding homology type of transverse curves on $T_s$ for $s>0$ and $s<0$, respectively. Then $\partial S\cap T_0\ne \emptyset$ if and only if $(p^+,q^+)$ is not a multiple of $(P,Q)$ and $(p^+,q^+)-(p^-,q^-)=k(P,Q)$ for some $k\ne 0$.
\end{proposition}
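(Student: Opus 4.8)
The plan is to analyze the behavior of the two families of transverse curves as $s \to 0^{\pm}$ using the normal-form results already established. First I would invoke Lemma \ref{lem:limitcurve} on each side separately: we may deform the transverse curves on $T_s$ for $s>0$ so that, in the new rectangle coordinates on $T_0$ (the ones in which vertical lines are the $(P,Q)$-periodic orbits), the limiting curve is a union of horizon lines together with the boundary orbit, possibly multi-covered; similarly for $s<0$. The key bookkeeping device is Lemma \ref{lem:decomposition}: write $(p^+,q^+) = a^+ [h] + b^+ [v]$ and $(p^-,q^-) = a^- [h] + b^- [v]$ with $[v] = (P,Q)$, where $a^+,a^-$ are the $[h]$-coefficients, which are coordinate-independent. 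Since a horizon line contributes exactly one copy of $[h]$ to the homology and the vertical (boundary) piece contributes multiples of $[v]=(P,Q)$, the number of horizon-line sheets in the limiting curve on each side equals $a^+$ (resp. $a^-$), while the multiplicity of the boundary-orbit covering differs from side to side.

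Next I would argue the equivalence. For the ``if'' direction: suppose $(p^+,q^+)$ is not a multiple of $(P,Q)$, so $a^+ \ne 0$, and $(p^+,q^+) - (p^-,q^-) = k(P,Q)$ with $k \ne 0$. Then $a^+ = a^-$ (the $[h]$-coefficients agree) but $b^+ - b^- = k \ne 0$. The two limiting curves both contain $a^+$ horizon-line sheets, so these sheets match up across $T_0$; however, the total homology the two families deposit on $T_0$ differs by $k(P,Q)$, which is exactly $|k|$ copies of the periodic orbit. A curve of transverse type must absorb this discrepancy, and the only invariant subset of $T_0$ that can carry homology in the direction $(P,Q)$ while failing to be transverse is the periodic orbit itself; hence $\partial S \cap T_0$ contains a $|k|$-fold cover of that orbit, so $\partial S \cap T_0 \ne \emptyset$. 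For the ``only if'' direction: if $\partial S \cap T_0 = \emptyset$, then $S$ is transverse to $T_0$, so by Lemma \ref{lem:invarianthomology} the homology type of $\Gamma_s$ is constant through $s=0$, giving $(p^+,q^+) = (p^-,q^-)$, i.e.\ $k = 0$; and transversality to $T_0$ forces, via Lemma \ref{lem:pqtransversecurve}, that $(p^+,q^+)$ is not a tangent vector to the linear flow on $T_0$, i.e.\ not a multiple of $(P,Q)$ — wait, this last point must be stated carefully: transversality to the rational torus $T_0$ means the curve is not parallel to $(P,Q)$, so indeed $(p^+,q^+)$ is not a multiple of $(P,Q)$. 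Taking the contrapositive: if either $(p^+,q^+)$ is a multiple of $(P,Q)$ or $k=0$, then one cannot have a boundary orbit forced on $T_0$ — in the first case $T_0$ cannot meet $S$ transversely in a curve of that type so the family must be rerouted, in the second case the homology matches and the limiting curve can be taken to be horizon lines only (removable case, Fig.\ref{fig:removable}), eliminating the boundary orbit.

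The main obstacle I anticipate is making rigorous the claim that the discrepancy $k(P,Q)$ between the two one-sided limits is \emph{necessarily} realized as a genuine boundary orbit rather than being absorbable by a homotopy — in other words, ruling out the ``removable'' phenomenon precisely when $k \ne 0$. This requires combining Lemma \ref{lem:limitcurve} (which says we may assume the limiting curve is horizon lines plus a boundary orbit) with a careful count: after the deformation, the $[h]$-coefficient $a^+$ is fixed and coordinate-independent, so the number of horizon sheets is pinned down on each side; the leftover homology $k(P,Q)$ then has nowhere to go except into a $|k|$-cover of the periodic orbit, and if $k \ne 0$ this cover is nontrivial and cannot be transverse to the flow on $T_0$. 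Conversely when $k=0$ the leftover is zero and Lemma \ref{lem:limitcurve} lets us take the limiting curve to be purely horizon lines, matching both sides with no boundary orbit. I would also need the observation that when $(p^+,q^+)$ is a multiple of $(P,Q)$ the family simply cannot reach $T_0$ transversely at all (Lemma \ref{lem:pqtransversecurve}), so the hypothesis ``$(p^+,q^+)$ not a multiple of $(P,Q)$'' is exactly what is needed for the statement to be about a boundary orbit on $T_0$ rather than about the family failing earlier.
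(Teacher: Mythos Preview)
Your approach is the paper's: normalize the limit curve via Lemma~\ref{lem:limitcurve} into horizon lines plus the boundary orbit, then decompose $(p^\pm,q^\pm)=a^\pm[h]+b^\pm[v]$ via Lemma~\ref{lem:decomposition} and track the two coefficients. The paper organizes the two directions more cleanly, and there is one substantive point you miss.

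What you label the ``only if'' argument --- assuming $\partial S\cap T_0=\emptyset$ and deducing $(p^+,q^+)=(p^-,q^-)$ --- is in fact the contrapositive of the \emph{sufficiency} direction, which the paper dispatches in one line via Lemma~\ref{lem:invarianthomology}. The genuine necessity direction assumes $\partial S\cap T_0\ne\emptyset$ and must show three things: (a) the difference $(p^+,q^+)-(p^-,q^-)$ is a multiple of $(P,Q)$; (b) that multiple is nonzero; (c) $(p^+,q^+)$ itself is not a multiple of $(P,Q)$. Your sketch handles (b) (the removable case when $b^+=b^-$), but (a) requires the key observation you do not make: $a^+=a^-$ because $\Gamma_0=T_0\cap S$ is a \emph{single} set whose transverse part (the horizon lines) lies in the limit from \emph{both} sides, so the number of horizon sheets --- and hence the $[h]$-coefficient --- is the same $\alpha$ regardless of the side of approach. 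The paper states this explicitly. For (c), Lemma~\ref{lem:pqtransversecurve} is the wrong reference: the nearby tori $T_s$ are not the rational torus $T_0$, so a curve of type $k(P,Q)$ can be transverse on $T_s$. The right tool is Proposition~\ref{pro:nopq}, which shows that if a boundary orbit sits on $T_0$ then the nearby transverse types cannot be multiples of $(P,Q)$.
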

\begin{proof}
	The sufficient part follows directly from Lemma \ref{lem:invarianthomology}.We now prove the necessary part.
	
	We shall use the new rectangle coordinates. By Lemma \ref{lem:limitcurve}, $\Gamma_0=T_0\cap S$ consists of some horizon lines and the boundary orbit. By definition of a Birkhoff section, the limiting curve of $\Gamma_s=T_s\cap S$ as $s\rightarrow 0^+$ must go through each horizon line exactly once, with the orientation
	given by positively transverse condition. So is this for $s\rightarrow 0^-$. Let $\alpha $ be the number of horizon lines in $\Gamma_0$.
	
	By Lemma \ref{lem:decomposition}, we have 
	\[(p^+,q^+)=\alpha[h]+\beta^+[v],\]
	\[(p^-,q^-)=\alpha[h]+\beta^-[v].\]
	By Proposition \ref{pro:nopq}, it is necessary that $\alpha\ne 0$. Thus both $(p^+,q^+)$ and $(p^-,q^-)$ are not multiple of $(P,Q)$.
	
If $\beta^+=\beta^-$, then $\Gamma_s$ has the same homology type for $s>0$ and $s<0$. In this case, since  $(p^+,q^+)$ is not multiple of $(P,Q)$, we can remove the boundary orbit by deform $T_s$ into a transverse curve of type $(p^+,q^+)=(p^-,q^-)$ as $s\rightarrow 0$. Thus $\beta^+-\beta^-=k\ne 0$. The necessary part is proved.
	
\end{proof}

We have established the necessary and sufficient conditions for periodic orbits to serve as boundary orbits $\partial S$, either as ends of a family of tori or on an invariant torus. There remains another type of periodic orbit, which serves as an invariant line on a broken tori. 

\begin{proposition}\label{pro:nontransbroken}
	For a broken torus $B$ with vertical invariant lines, $B\cap \partial S= \emptyset$ if and only if 
	\begin{equation}\label{eq:qiqj}
		p_i^-=p_j^+\ne 0, \quad \sum_{i=1}^k q_i^-=\sum_{j=1}^{l}q_j^+.
	\end{equation}

Similarly, if $B$ is a broken torus with invariant horizon lines, then $B\cap \partial S= \emptyset$ if and only if 
\begin{equation}\label{eq:pipj}
	\sum_{i=1}^k p_i^-=\sum_{j=1}^{l}p_j^+, \quad 	q_i^-=q_j^+\ne 0.
\end{equation}
\end{proposition}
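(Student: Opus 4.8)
The two statements are interchanged by swapping the roles of the two coordinate axes (equivalently, by passing to the rectangle representation of $B$ whose invariant lines are horizontal rather than vertical), so I will only plan the case of a broken torus $B$ with invariant vertical lines. Fix a rectangle representation of $B$ as in Lemma \ref{lem:rectanglerepre}, with both coordinates increasing along the flow, and retain the notation of Lemma \ref{lem:twofamily} and Proposition \ref{pro:homology}; as throughout this section, $S$ is assumed to be in reduced form, i.e.\ no removable boundary orbit has been created on $B$ (cf.\ Lemma \ref{lem:limitcurve}). The plan is to reduce the equivalence to the single assertion
\[B\cap\partial S=\emptyset\iff \Gamma_0:=B\cap S\ \text{is a transverse curve on }B,\]
and then to feed the right-hand side into Proposition \ref{pro:homology} for one implication and into an explicit construction of a transverse curve on $B$ for the other.

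First I would prove necessity. Assume $B\cap\partial S=\emptyset$, so that none of the finitely many periodic orbits forming the invariant vertical lines of $B$ is a boundary orbit. Since $\iota(S)$ is closed and $\iota$ embeds $S\setminus\partial S$ into $M\setminus\iota(\partial S)$ transversally to $X$, every point of $B\cap\iota(S)$ lies in $\iota(S\setminus\partial S)$, where $S$ is transverse to $X$. Because $X$ is tangent to every invariant stratum of $B$ (the periodic orbits and the open annuli), transversality to $X$ automatically upgrades to transversality of $S$ to each of these strata: at such a point $q$ one has $T_q\iota(S)+\mathbb{R}X(q)=T_qM$ and $\mathbb{R}X(q)$ lies in the tangent space of the stratum. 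Hence $\Gamma_0=B\cap S$ is a genuine transverse curve on $B$, with a well-defined homology type $(p_0,q_0)$ by Lemma \ref{lem:homologyonbro}, and Proposition \ref{pro:homology} immediately yields $p_i^-=p_j^+=p_0\neq0$ and $\sum_{i=1}^k q_i^-=\sum_{j=1}^{l} q_j^+=q_0$, which is \eqref{eq:qiqj}.

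For sufficiency, assume \eqref{eq:qiqj} and set $p_0:=p_i^-=p_j^+\neq0$, $q_0:=\sum_i q_i^-=\sum_j q_j^+$. The key step is a broken-torus version of Lemma \ref{lem:pqtransversecurve}: since $p_0\neq0$, the class $(p_0,q_0)$ is not realized by the flow direction along any invariant vertical line, and across each open annulus of $B$ the flow runs monotonically (heteroclinically) from one boundary orbit to the other, so a curve of type $(p_0,q_0)$ can be routed through the rectangle representation so as to cross every invariant vertical line and every open annulus transversally. Using Corollary \ref{cor:homotopy} one can moreover prescribe how this curve winds as it traverses the $k$ ``inner'' and the $l$ ``outer'' strata; the equalities $\sum_i q_i^-=\sum_j q_j^+=q_0$ are exactly the bookkeeping identities that make it possible to distribute these windings so that the resulting transverse curve $\Gamma_0$ limits onto families $\Gamma_i^{-\epsilon}$ and $\Gamma_j^{\epsilon}$ of the prescribed homology types on the two sides (this is Proposition \ref{pro:homology} read in reverse). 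Extending $\Gamma_0$ to a transverse family on the nearby tori as in Section \ref{sec:2} exhibits $S$ as transverse to $B$, whence $B\cap\partial S=\emptyset$ for the reduced section. The broken torus with invariant horizontal lines is handled identically after interchanging the two coordinates.

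I expect the main obstacle to be the sufficiency direction: making precise the existence and flexibility of transverse curves on a broken torus, which fails to be a manifold along its invariant lines, and verifying that the balance conditions in \eqref{eq:qiqj} are not only necessary (via Proposition \ref{pro:homology}) but are precisely what allows the transverse curve on $B$ to be matched to the prescribed families $\{\Gamma_i^{-\epsilon}\}$ and $\{\Gamma_j^{\epsilon}\}$ on its two sides. The remaining arguments are routine intersection-number bookkeeping built on Lemmas \ref{lem:rectanglerepre}--\ref{lem:homologyonbro} and Proposition \ref{pro:homology}.
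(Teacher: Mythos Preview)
Your strategy coincides with the paper's: the direction $B\cap\partial S=\emptyset\Rightarrow$\eqref{eq:qiqj} is exactly Proposition~\ref{pro:homology}, and for the converse one must manufacture a transverse section near $B$ realizing the prescribed homology types on every adjacent family. Where you leave ``distributing the windings'' as the acknowledged obstacle and invoke ``Proposition~\ref{pro:homology} read in reverse,'' the paper supplies a concrete combinatorial mechanism. After homotoping the transverse curves on nearby tori to be horizontal near the invariant vertical lines (so that all vertical winding is carried inside the open annuli of $B$), the gluing problem becomes: assign an integer to each open annulus so that, for every adjacent family $T_i^{\pm\epsilon}$, the integers on the annuli in its limit set sum to the prescribed $q_i^{\pm}$. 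The paper solves this by viewing $B$ as a directed planar graph on $S^2$ (vertices $=$ periodic orbits, edges $=$ open annuli, faces $=$ families of tori): first strip off the single-edge faces, then label edges by sweeping outward from an initial disk face one layer of faces at a time, using that each new face contributes at least one fresh edge; the global balance $\sum_i q_i^-=\sum_j q_j^+$ is precisely what forces the last remaining face to close up automatically. Your appeal to Corollary~\ref{cor:homotopy} at this step is misplaced, since that corollary concerns a single smooth torus and says nothing about allocating homology among the strata of $B$; it does, however, enter exactly where the paper uses it, namely once the edge labels are fixed and one must actually match the two families of curves across each individual annulus.
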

\begin{proof}
	The sufficient part is provided by Proposition \ref{pro:homology}. We only need to prove the necessary part. Without loss of generality, assume $B$ has invariant vertical lines. 
	 
	 By Corollary \ref{cor:homotopy}, we can homotope transverse curves on nearby tori such that most parts of the curves are horizontal. The homologies $q_i^-,q_j^+$ are given by these non-horizontal parts. Moreover, we shall make sure that the transverse curves are horizon segments in neighbourhood of invariant vertical lines. Thus non-horizontal parts are in open annuli of $B$. 
	 
	 Note that every open annulus belongs the limit set of two families of open annuli on nearby tori. If the homologies carried by the two families of open annuli are identical, they can be glued together on $B$. Moreover, if the transverse curves in neighbourhood of $B$ can be glued along every open annulus of $B$, then a transverse section is produced in the neighborhood of $B$. 
	 	 
	 Thus, our goal is to distribute the total number $\sum_{i=1}^k q_i^-=\sum_{j=1}^{l}q_j^+$ among the different open annuli of $B$ such that each $q_i^-,q_j^+$ is given by the sum of the numbers in the corresponding open annuli of $B$, as determined by the limit set of the family of tori. We can regard $B$ as a directed graph on $S^2$, where the vertices correspond to invariant lines in $B$. The directed edges represent the open annuli between these invariant lines, with the direction of each edge corresponding to the direction of the flow on these annuli. Each face of this graph corresponds to a family of invariant tori, with an associated number from $q_i^-,q_j^+$ representing the homology of its transverse curves. Thus, the problem reduces to assigning numbers to different edges such that the number associated with each face equals the sum of the numbers on its edges.
	 
	 A family of tori whose limit set contains only one open annulus corresponds to a face with only one edge whose two vertices are identical. It is necessary that the number assigned to such an edge matches the number associated with the face. Without loss of generality, assume $T_1^{-\epsilon}$ converge to a unique open annulus $O_1\subset B$, while the other family of tori is $T_1^{\epsilon}$. Then assign $q_1^-$ to $O_1$, replace $q_1^+$ with $q_1^+-q_1^-$ and remove the open annulus $O_1$. This step reduces the original problem to a similar one with one fewer edge. We repeatedly applying this process until no family of tori remains whose limit set contains only one open annulus.
	 
	 By above argument, we can assume every face have at least two edges. We start with any face that is isomorphic to a closed disk and assign a possible solution for it. Then, we consider the faces adjacent to this initial face (where adjacency means sharing at least one edge). By Lemma \ref{lem:twofamily}, these new faces must belong to the other class. Since the edges of each face form a cycle and the original face is isomorphic to a disk, every newly introduced face must contribute at least one additional edge, and different faces cannot share the same edge. We then assign a valid solution to these new faces. Through this iterative process, the solved region in the graph expands layer by layer. After repeating the process a finite number of times, there will be no remaining edges to assign, and the outermost edge will correspond to the boundary of the final face. The final equation associated with this face will hold automatically due to the condition $\sum_{i=1}^k q_i^-=\sum_{j=1}^{l}q_j^+$. Hence, we have successfully constructed a solution to our problem.
	 
	 Now, different families of tori on both sides of an open annulus converge to the same type of transverse curve on the annulus. It is easy to glue them together to form a transverse section by Corollary \ref{cor:homotopy}. Thus we succeed to make $B$ to be transverse under the assumption $\sum_{i=1}^k q_i^-=\sum_{j=1}^{l}q_j^+$, without changing the homology type of transverse curves on nearby families of tori.
	 	 
\end{proof}
\begin{remark}
	The conclusion of Proposition \ref{pro:nontranstorus} and Proposition \ref{pro:nontransbroken} can not be merged  into a single statement. When $B\cap \partial S\ne \emptyset$, it is possible that $p_i^-=p_j^+=0$, since vertical lines may serve as transverse curves within the open annuli of $B$. 
\end{remark}

By summarising the results in Lemma \ref{lem:endoftori}, Proposition \ref{pro:nontranstorus} and Proposition \ref{pro:nontransbroken}, we have the following theorem.
\begin{theorem}\label{thm:main}
	If $M$ admits a regular toric invariant foliation and the natural generators on different families of tori coincide. Then there is a consistent way to define the homology type of closed curves in $M$. Let $S$ be a Birkhoff section in $M$, we have
	\begin{itemize}
		\item[(1)] If a family of tori ends in an invariant vertical line $P_v$ with transverse curves of $(p,q)$-type. Then $\partial S\cap P_v=0$ if and only if $q=0$. Otherwise, the boundary curve is a $|q|$-fold covering of the periodic orbit;
		\item[(2)] If a family of tori ends in an invariant horizon line $P_h$ with transverse curves of $(p,q)$-type. Then $\partial S\cap P_h=0$ if and only if $p=0$. Otherwise, the boundary curve is a $|p|$-fold covering of the periodic orbit;
		\item[(3)] Let $T_0$ be an invariant tori of $(P,Q)$-type and $(p^+,q^+),(p^-,q^+-)$ are corresponding types of transverse curves in two nearby families of tori. Then $\partial S\cap T_0\ne \emptyset$ if and only if $(p^+,q^+)$ is not a multiple of $(P,Q)$ and $(p^+,q^+)-(p^-,q^-)=k(P,Q)$ for some $k\ne 0$. The boundary curve will be a $|k|$-fold covering of the periodic orbit.
		\item[(4)] If $B$ is a broken torus with invariant vertical lines, then $B\cap \partial S= \emptyset$ if and only if 
		\begin{equation}
			p_i^-=p_j^+\ne 0, \quad \sum_{i=1}^k q_i^-=\sum_{j=1}^{l}q_j^+.
		\end{equation}
	\item[(5)] If $B$ is a broken torus with invariant horizon lines, then $B\cap \partial S= \emptyset$ if and only if 
	\begin{equation}
		\sum_{i=1}^k p_i^-=\sum_{j=1}^{l}p_j^+, \quad 	q_i^-=q_j^+\ne 0.
	\end{equation}
	\end{itemize}

\end{theorem}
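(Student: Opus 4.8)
The plan is to recognize that Theorem \ref{thm:main} is a repackaging of the three structural results of this section — Lemma \ref{lem:endoftori}, Proposition \ref{pro:nontranstorus}, and Proposition \ref{pro:nontransbroken} — glued together by the global consistency of homology types furnished by Lemma \ref{lem:invarianthomology} and Lemma \ref{lem:homologyonbro}. So I would prove it part by part, first setting up the common framework and then citing the appropriate earlier statement for each item.

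\emph{Consistency of homology types.} Since by hypothesis the natural generators of $H_1$ of the tori in different families coincide, and since within any single family the homology class of $S\cap T_s$ is constant by Lemma \ref{lem:invarianthomology}, one obtains a well-defined class $(p,q)\in\Z\oplus\Z$ for $S\cap T$ on every regular invariant torus $T\subset M$; on a broken torus Lemma \ref{lem:homologyonbro} shows the same class is well-defined via the rectangle representation. This justifies the sentence ``there is a consistent way to define the homology type of closed curves in $M$.'' I would also note, from Definition \ref{def:toricfoliation}, that every connected component of $M\setminus O$ is either a periodic orbit arising as the end of a family of tori, a rational invariant torus to which $S$ fails to be transverse, or part of a regular broken torus — so that cases (1)--(5) are exhaustive and every boundary orbit of every Birkhoff section falls under one of them.

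\emph{Cases (1)--(2) and (4)--(5).} When a family of tori degenerates to a periodic orbit, in rectangle coordinates a pair of opposite edges collapses, leaving an invariant vertical (resp. horizontal) line; statements (1) and (2), with ``$\partial S\cap P_v=0$'' read as ``$P_v$ is not a boundary orbit,'' are then exactly the two halves of Lemma \ref{lem:endoftori}. Statements (4) and (5) are verbatim the two cases of Proposition \ref{pro:nontransbroken} for a broken torus with vertical, resp. horizontal, invariant lines; nothing new is needed beyond citing it.

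\emph{Case (3).} The equivalence ``$\partial S\cap T_0\neq\emptyset$ iff $(p^+,q^+)$ is not a multiple of $(P,Q)$ and $(p^+,q^+)-(p^-,q^-)=k(P,Q)$ with $k\neq0$'' is precisely Proposition \ref{pro:nontranstorus}. The genuinely new assertion is the covering multiplicity $|k|$: re-running the argument of Proposition \ref{pro:nontranstorus} in the new rectangle coordinates, Lemma \ref{lem:limitcurve} says the limiting curve on $T_0$ is $\alpha$ horizontal lines together with the (possibly multi-covered) boundary orbit, and by Lemma \ref{lem:decomposition} we may write $(p^\pm,q^\pm)=\alpha[h]+\beta^\pm[v]$ with $[v]=(P,Q)$, so the jump across $T_0$ is $k=\beta^+-\beta^-$ in the $[v]$-direction; tracking how the transverse curve closes up across the vertical edges of the new rectangle — which represent the boundary orbit — shows that orbit is traversed $|k|$ times, i.e.\ is $|k|$-fold covered. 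The main (and really only) obstacle is this bookkeeping in (3): one must carefully reconcile the ``possibly multi-covered'' clause of Lemma \ref{lem:limitcurve} with the count $|k|$, and make sure the sign/orientation conventions for $(p,q)$ and for ``ends in a vertical/horizontal line'' in (1)--(2) are consistent with the rectangle-coordinate conventions fixed earlier in Sections \ref{sec:2}--\ref{sec:3}. Everything else is a direct appeal to the cited results.
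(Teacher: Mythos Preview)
Your proposal is correct and matches the paper's own treatment: the paper introduces Theorem \ref{thm:main} with the single sentence ``By summarising the results in Lemma \ref{lem:endoftori}, Proposition \ref{pro:nontranstorus} and Proposition \ref{pro:nontransbroken}, we have the following theorem,'' and gives no further argument. You in fact go slightly beyond the paper by supplying the $|k|$-fold covering bookkeeping in case (3), which the paper asserts in the theorem statement but never justifies separately; one small slip is that the non-transversal rational torus $T_0$ of case (3) lies inside $O$, not in $M\setminus O$, so your exhaustiveness remark should be phrased accordingly.
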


\section{Applications}\label{sec:application}
\subsection{Boundary of a toric domain}
\begin{definition}\label{def:toric}
	A \textsl{toric domain} $X_\Omega$ is the preimage of a region $\Omega\subset \mathbb{R}_{\ge 0}^2$
	\[\mu:\mathbb{C}^2\rightarrow \mathbb{R}^2, \quad (z_1,z_2)\rightarrow (\pi|z_1|^2,\pi|z_2|^2).\]
\end{definition}

The \textsl{ellipsoids}
\[E(a,b)=\left\{(z_1,z_2)\Big|\frac{\pi|z_1|^2}{a}+\frac{\pi|z_2|^2}{b}\le 1 \right\},\]
and \textsl{polydisks}
\[P(a,b)=\left\{(z_1,z_2)\Big|\frac{\pi|z_1|^2}{a}\le 1, \frac{\pi|z_2|^2}{b}\le 1 \right\},\]
are examples of toric domains. The $\Omega$ for $E(a,b)$ are right triangles with legs on the axes, while $\Omega$ for polydisks are rectangles with bottom and left sides on the axes. 

In this section, we consider a toric domain where $\Omega$ is a bounded closed set in $\mathbb{R}_{\ge 0}^2$ bounded by a smooth embedded curve $f: [0,1]\rightarrow \mathbb{R}^2$ such that $f(0)=(0,a)$ and $f(1)=(b,0)$. Denote $\partial \Omega$ to be the image of $f$ and $M_f$ to be the three manifold $\mu^{-1}(\partial\Omega)$. As a $3$-dimensional hypersurface in $\mathbb{R}^4$, $M_f$ carries a standard Hamiltonian flow generated by the vector field  $J\cdot \mathbf{n}$, where $J$ is the standard complex structure in $\mathbb{R}^4$ and $\mathbf{n}$ is the outer normal vector to $\partial\Omega$. We define the homology type of closed curves in $M_f$ such that the circle $\mu^{-1}((0,a))$ corresponds to a vertical line of $(0,1)$-type, $\mu^{-1}((b,0))$ corresponds to a horizon line of $(1,0)$-type. 

Depending on $\partial\Omega$, the toric domain $X_\Omega$ can be classified into several types. $X_\Omega$ is a \textsl{monotone toric domain} if $\partial\Omega$ is the graph of a monotone decreasing function. If moreover $\Omega$ is convex, then $X_\Omega$ is a \textsl{convex toric domain}. If $\mathbb{R}_{\ge 0}^2\backslash \Omega$ is convex, then $X_\Omega$ is called a \textsl{concave toric domain}. The following is obvious.
\begin{lemma}
	The boundary of a toric domain is isomorphic to $S^3$ foliated by a single family of tori. The vector field for flow on the invariant torus $\mu^{-1}(f(s))$$(s\in (0,1))$ is parallel to the normal vector at $f(s)$.
\end{lemma}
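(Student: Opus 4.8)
The plan is to pass to the polar-type coordinates $z_j=\rho_j e^{i\theta_j}$ on $\mathbb{C}^2$, in which $\mu(z_1,z_2)=(\pi\rho_1^2,\pi\rho_2^2)$, and to read off both assertions essentially by inspection of the fibers of $\mu$.

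First I would record the fibers of $\mu|_{M_f}$. Since $\partial\Omega=f([0,1])$ with $f(0)=(0,a)$, $f(1)=(b,0)$ and $f(s)$ in the open first quadrant for $s\in(0,1)$, for each such $s$ the set $\mu^{-1}(f(s))$ is the $2$-torus $\{\rho_1=\sqrt{f_1(s)/\pi},\ \rho_2=\sqrt{f_2(s)/\pi}\}$, whereas $\mu^{-1}(f(0))$ and $\mu^{-1}(f(1))$ are the circles $\{z_1=0,\ |z_2|^2=a/\pi\}$ and $\{|z_1|^2=b/\pi,\ z_2=0\}$. Thus $M_f$ is the union of the single one-parameter family $\{\mu^{-1}(f(s))\}_{s\in(0,1)}$, capped at its two ends by these circles; this is precisely the statement that $M_f$ is foliated by one family of tori (with $O=M_f$ minus the two circles). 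To identify $M_f$ with $S^3$ I would fix a small $\varepsilon>0$ and argue that $\mu^{-1}(f([0,\varepsilon]))$ and $\mu^{-1}(f([1-\varepsilon,1]))$ are solid tori: in the first, $z_1$ sweeps out a disk while $z_2$ stays on a circle of strictly positive radius depending smoothly on $|z_1|$, so the piece is $D^2\times S^1$ with $D^2\ni z_1$, and its meridian is the $\theta_1$-circle on the separating torus; symmetrically the second piece has meridian the $\theta_2$-circle. Since $M_f$ is obtained from these two solid tori by gluing along their common boundary torus via (up to a collar) the identity in the coordinates $(\theta_1,\theta_2)$, a meridian of one is carried to a curve meeting a meridian of the other exactly once, so $M_f$ is the genus-one Heegaard splitting of $S^3$.

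For the statement about the flow I would compute $J\cdot\mathbf{n}$ directly. Writing $M_f=G^{-1}(0)$ with $G(z)=F(\pi|z_1|^2,\pi|z_2|^2)$ and $\nabla F=\mathbf{n}=(n_1,n_2)$ the outer normal of $\partial\Omega$, the chain rule gives $\nabla G=2\pi(n_1 z_1,\,n_2 z_2)$ under $\mathbb{R}^4\cong\mathbb{C}^2$, hence $J\nabla G=2\pi(n_1\,iz_1,\,n_2\,iz_2)$; and since $iz_j$ is precisely the coordinate field $\partial_{\theta_j}$ off $\{z_j=0\}$, on the torus $\mu^{-1}(f(s))$ one gets
\[ J\cdot\mathbf{n}=2\pi\big(n_1(s)\,\partial_{\theta_1}+n_2(s)\,\partial_{\theta_2}\big), \]
a constant-coefficient field whose direction is the normal vector $\mathbf{n}(f(s))$. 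In particular the flow on each invariant torus is literally linear, with slope recorded by $\mathbf{n}(f(s))$ in the homology conventions fixed in the text. I would also note, for consistency with Definition \ref{def:toricfoliation}, that $J\cdot\mathbf{n}$ is nonvanishing on the two end circles — which follows because $\partial\Omega$ meets the axes transversally, forcing the relevant component of $\mathbf{n}$ to be nonzero there — so that $M\setminus O$ really consists of exactly two periodic orbits and no broken torus.

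Everything above is a direct unwinding of definitions except the identification $M_f\cong S^3$, which I expect to be the only point requiring a little care; the delicacy there is entirely the behaviour of $f$ at the axes (one needs $f$ transverse to them, equivalently $f_1$ monotone near $s=0$ and $f_2$ monotone near $s=1$, for the solid-torus picture to be visible). Under the standing hypothesis that $M_f$ is a smooth hypersurface — already implicit in the text, since $J\cdot\mathbf{n}$ is required to be a smooth nonvanishing vector field — this causes no difficulty, and the Heegaard argument goes through; alternatively one can sidestep it by producing a product-type diffeomorphism of $\overline{\mathbb{R}^2_{\ge 0}}$ carrying $\partial\Omega$ to $\partial E(1,1)$, preserving the axes, and lifting it through $\mu$ to a diffeomorphism $M_f\to\mu^{-1}(\partial E(1,1))=S^3$.
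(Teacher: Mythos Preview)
Your proposal is correct and carefully argued. The paper itself gives no proof at all---it simply states ``The following is obvious'' before the lemma---so your write-up in fact supplies the standard details (polar coordinates, fiber description, Heegaard decomposition into two solid tori, and the chain-rule computation of $J\nabla G$) that any reader would fill in. Your attention to the transversality of $f$ at the axes, needed for smoothness of $M_f$ and for the solid-torus picture near the end circles, goes a bit beyond what the paper makes explicit but is exactly the right point to flag.
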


\begin{lemma}\label{lem:embedded}
	Any $(p,q)$ type piecewise smoothly immersed oriented curve in $T$ can be approximated by an embedded curve of the same type.
\end{lemma}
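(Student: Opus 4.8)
The plan is to turn the given curve into an embedded one by repeatedly resolving its self‑intersections, always working inside an arbitrarily small neighbourhood $N$ of the original curve in $T$; this is the relevant sense of ``approximation'' here (it keeps the homology type unchanged and, in the situations where the lemma is applied, preserves positive transversality to the linear flow). \emph{General position.} A piecewise smoothly immersed curve $\gamma$ has only finitely many corners, and at each corner both one‑sided velocity vectors are nonzero, so the corner can be rounded off by a $C^0$‑small modification supported in a tiny neighbourhood, yielding a smooth immersion. A further $C^0$‑small perturbation supported away from a finite set makes all self‑intersections transverse double points, by the usual transversality statement for immersions of curves into a surface. If one additionally wants the curve to stay positively transverse to a fixed direction, one uses that the admissible tangent directions fill an open half of the circle of directions, which is geodesically convex, so both the rounding and the perturbation can be carried out with all tangent directions confined to this arc. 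After this step $\gamma$ is a smooth immersed curve in $N$ with finitely many transverse double points.

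\emph{Resolving the crossings.} I would then remove the double points one at a time by the oriented (Seifert) smoothing: in a small disk around a transverse double point, replace the crossing of the two oriented strands by the unique pair of disjoint arcs compatible with the orientations. Each such move is supported in an arbitrarily small disk, so it keeps the curve inside $N$; it decreases the number of double points by one; and it does not change the homology class, since before and after the smoothing the two local pieces agree on the boundary of the disk and hence differ by a $1$‑cycle supported in the disk, which is nullhomologous. Convexity of the arc of admissible directions again lets one perform the smoothing without introducing tangent directions outside it. After finitely many such moves one obtains an embedded oriented $1$‑submanifold $C\subset N$ with $[C]=(p,q)$.

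\emph{Cleaning up components.} Because $C$ is embedded, its essential components are pairwise disjoint embedded essential simple closed curves in $T$, hence mutually parallel; so $C$ consists of $N_{+}$ copies and $N_{-}$ copies of a single primitive class $\pm(p',q')$ together with finitely many null‑homotopic circles, where $(N_{+}-N_{-})(p',q')=(p,q)$. Deleting a null‑homotopic component, or deleting one positively and one negatively oriented essential component together, keeps the curve inside $N$ and does not alter its homology class. After deleting all null‑homotopic components and $N_{-}$ such cancelling pairs, one is left with $l=\gcd(p,q)$ disjoint parallel circles of class $(p/l,q/l)$, that is, with an embedded curve of type $(p,q)$ in the sense of Lemma~\ref{lem:mntype}, contained in $N$. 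This gives the lemma.

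The conceptual ingredients — oriented smoothing and the fact that disjoint essential simple closed curves on $T$ are parallel — are standard, so the work is entirely bookkeeping: making sure that every modification is genuinely $C^0$‑small (supported in a shrinking family of disks) so that one never leaves the chosen neighbourhood, and, when relevant, that the open half‑circle of positively transverse directions is respected at each rounding and each smoothing. I expect the point requiring the most care to be precisely this last compatibility, namely checking that the oriented smoothing can be realized while keeping all tangent directions inside the admissible arc; it is delicate only in the sense of needing a careful local picture, and remains routine.
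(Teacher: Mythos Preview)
Your proof is correct and follows the same core idea as the paper's: put the curve in general position and then perform the oriented (Seifert) smoothing at each transverse double point, which preserves the homology class. The paper's argument stops there, while you add two things the paper omits --- an explicit component-cleanup step (deleting nullhomotopic circles and cancelling oppositely oriented parallel pairs) and a discussion of preserving positive transversality to a fixed direction; both are sensible elaborations but not strictly needed for the lemma as stated, since the paper is content with any embedded $1$-manifold of class $(p,q)$ and handles transversality separately in the application.
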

\begin{proof}
	This is a consequence of standard resolution procedure. Without loss of generality, we assume that the immersed curve is smooth and that all self-intersections are transverse. Let $N$ be a small neighbourhood of an self-intersection point. The immersed curve intersect $\partial N$ at $p_1,p_2,p_3,p_4$, and within $\partial N$, it consists of two oriented segments: one from $p_1$ to $p_3$ and the other from $p_2$ to $p_4$. Clearly, we can approximate the immersed curve within $N$ by two wo non-intersecting curves: one from $p_1$ to $p_4$ and the other from $p_2$ tp $p_3$, in a way that preserves the overall orientation. This does not change the type of curve on torus. We can apply the same procedure to all the self-intersection point. Repeating this process at every self-intersection yields an embedded curve of the same type.
\end{proof}

For a disk-like Birkhoff section, it's necessary that the boundary orbit is an unknot. Note that any periodic orbit on an invariant torus is a torus knot. The following lemma provides the necessary and sufficient condition for a torus knot to be an unknot.
\begin{lemma}
	A torus knot is an unknot if and only if $|p|=1$ or $|q|=1$.
\end{lemma}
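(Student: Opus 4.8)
The plan is to prove the two implications separately, working in the standard genus-one Heegaard splitting $S^{3}=V_{1}\cup_{T}V_{2}$ of $S^{3}$ into two solid tori glued along the torus $T$ that carries the knot, with the convention that the $(1,0)$-curve bounds a disc in $V_{1}$ and the $(0,1)$-curve bounds a disc in $V_{2}$. Throughout we assume $\mathbf{gcd}(p,q)=1$, which by Lemma~\ref{lem:mntype} is precisely the condition for the $(p,q)$-curve $K\subset T$ to be connected, hence an honest knot (if $\mathbf{gcd}(p,q)\ge 2$ one has a link, not a knot, so the statement concerns the type of the connected curve).

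First I would treat the implication: if $|p|=1$ or $|q|=1$, then $K$ is unknotted. Suppose $|q|=1$. In the $(m_{1},\ell_{1})$ basis of $\partial V_{1}$, where $m_{1}=(1,0)$ bounds a disc in $V_{1}$ and $\ell_{1}=(0,1)$ is a longitude, the class of $K$ is $(p,\pm 1)$, so the inclusion $K\hookrightarrow V_{1}$ sends $[K]$ to a generator of $\pi_{1}(V_{1})\cong\mathbb{Z}$. A simple closed curve on the boundary of a solid torus that generates the fundamental group of the solid torus is isotopic, inside that solid torus, to the core (apply a suitable power of the meridional Dehn twist to bring the curve to the longitude); hence $K$ is isotopic in $V_{1}$ to the core of $V_{1}$, which is an unknot in $S^{3}$. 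The case $|p|=1$ is symmetric, using $V_{2}$ in place of $V_{1}$.

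The converse is the main obstacle: one must exhibit a knot invariant distinguishing $T(p,q)$ from the unknot when $|p|,|q|\ge 2$. I would compute the knot group by van Kampen's theorem applied to $S^{3}\setminus K=(V_{1}\setminus K)\cup(V_{2}\setminus K)$: each piece deformation retracts onto the core of the corresponding solid torus, and the overlap $T\setminus K$ is an annulus wrapping $|q|$ and $|p|$ times around these two cores, giving $\pi_{1}(S^{3}\setminus K)\cong\langle x,y\mid x^{p}=y^{q}\rangle$. The element $z=x^{p}=y^{q}$ is central, and the quotient $\langle x,y\mid x^{p}=y^{q}\rangle/\langle z\rangle$ is the free product $\mathbb{Z}/p*\mathbb{Z}/q$, which is non-abelian as soon as $|p|,|q|\ge 2$ (for $p=q=2$ it is the infinite dihedral group, and it is larger otherwise). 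Hence $\pi_{1}(S^{3}\setminus K)$ is non-abelian, in particular not isomorphic to $\mathbb{Z}$, whereas the complement of the unknot is an open solid torus with fundamental group $\mathbb{Z}$; therefore $K$ is knotted. Alternatively one may invoke the Seifert genus $g(T(p,q))=\tfrac{(|p|-1)(|q|-1)}{2}$, which is positive in this range and consistent with the genus formula appearing in Theorem~\ref{thm:birktoric_intro}, or the non-triviality of the Alexander polynomial $\Delta_{T(p,q)}(t)=\dfrac{(t^{pq}-1)(t-1)}{(t^{p}-1)(t^{q}-1)}$; the group-theoretic route is the one I would carry out in detail.
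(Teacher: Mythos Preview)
Your argument is correct and is the standard proof of this classical fact: the ``easy'' direction via isotoping to the core of the appropriate solid torus, and the ``hard'' direction via the van Kampen computation of the knot group $\langle x,y\mid x^{p}=y^{q}\rangle$ and its non-abelian quotient $\mathbb{Z}/p*\mathbb{Z}/q$.

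The paper, however, does not prove this lemma at all. It is stated without proof as a well-known result from knot theory, immediately followed by the (also unproven) genus formula $g=\tfrac{(|p|-1)(|q|-1)}{2}$ for torus knots. So there is nothing to compare against; your write-up simply supplies a proof where the paper chose to quote a standard fact.
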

If a $(p,q)$-type bounds a Birkhoff section and the corresponding torus knot is not unknotted, then the Birkhoff section must have non-zero genus. In knot theory, the genus of a knot is defined as the minimal genus among all orientable, connected Seifert surfaces whose boundary is the knot. For a $(p,q)$ torus knot, the genus is 
\[g=\frac{(|p|-1)(|q|-1)}{2}.\]

\begin{figure}
	
	\centering
	\includegraphics[width=0.5\linewidth]{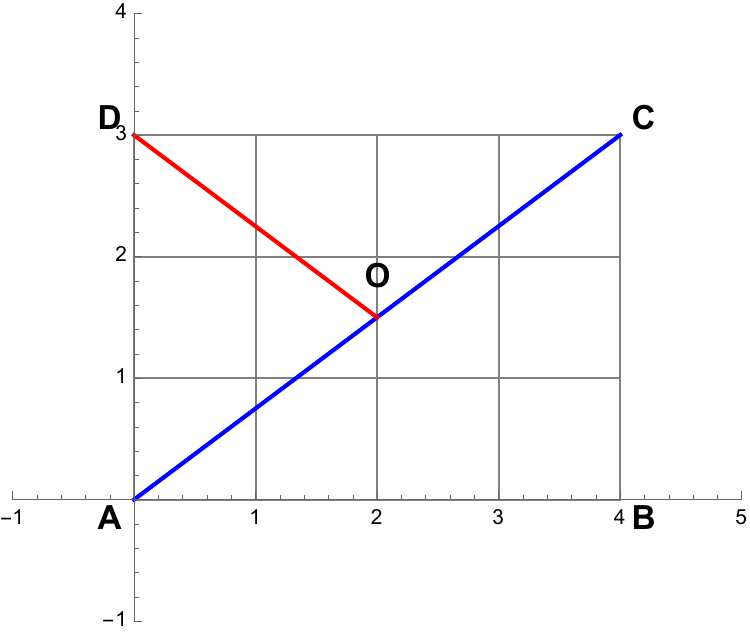}
	
	\caption{The limit curve on non-transversal torus, where the curve is illustrated in $\mathbb{R}^2$ and the torus is represented by $\mathbb{R}^2/\mathbb{Z}^2$.}
	\label{fig:nontrans}
\end{figure}
\begin{theorem}\label{thm:birktoric}
	The standard Hamiltonian flow on $M_f$ has the following properties:
	\begin{itemize}
		\item[(1)] $\mu^{-1}(f(0))$ bounds a disk-like Birkhoff section if and only if $(0,1)$ is not a normal vector at $f(s)$ for all $s\in(0,1)$; 
		\item[(2)] $\mu^{-1}(f(1))$ bounds a disk-like Birkhoff section if and only if $(1,0)$ is not a normal vector at $f(s)$ for all $s\in(0,1)$; 
		\item[(3)] $\mu^{-1}(f(0))$ and $\mu^{-1}(f(1))$ bound an annulus-like Birkhoff section if and only if there exist $p\ne 0$ and $q\ne0$ such that $(p,q)$ is not a normal vector at $f(s)$ for all $s\in(0,1)$;
		\item[(4)] a $(p,q)$-type periodic orbit on a rational invariant torus $\mu^{-1}(f(s_0))(0<s_0<1)$ bounds a Birkhoff section if and only if $(1,0)$ is not a normal vector at $f(s)$ for all $s\in(0,s_0]$ and $(0,1)$ is not a normal vector at $f(s)$ for all $s\in[s_0,1)$. Moreover, the genus of Birkhoff section is $\frac{k(|p|-1)(|q|-1)}{2}$ if the boundary is $k$-fold covered.
	\end{itemize}
\end{theorem}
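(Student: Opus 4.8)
\textbf{Proof strategy for Theorem \ref{thm:birktoric}.} The plan is to reduce everything to the machinery already developed, since $M_f$ is the simplest possible case of a manifold with regular invariant toric foliation: there is exactly one family of invariant tori $\mu^{-1}(f(s))$, $s\in(0,1)$, no broken tori, and the two ends $f(0)$ and $f(1)$ are periodic orbits sitting on an invariant vertical line (of type $(0,1)$) and an invariant horizontal line (of type $(1,0)$) respectively. The tangent direction of the linear flow on $\mu^{-1}(f(s))$ is, up to rotation by $J$, the normal direction $\mathbf{n}(s)$ at $f(s)$; so a closed curve of homology type $(p,q)$ is transverse to the flow on $\mu^{-1}(f(s))$ precisely when $(p,q)$ is not parallel to $\mathbf{n}(s)$, i.e. $(p,q)$ is not a normal vector at $f(s)$. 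By Lemma \ref{lem:invarianthomology} a Birkhoff section transverse to all the tori meets each one in a curve of the \emph{same} homology type $(p,q)$, and by Lemma \ref{lem:embedded} we may take this family of curves embedded and varying continuously; its closure together with the relevant boundary orbit(s) forms the candidate section.

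For item (3): if some $(p,q)$ with $p\neq 0$, $q\neq 0$ is never a normal vector on $(0,1)$, then by Lemma \ref{lem:pqtransversecurve} it gives a transverse $(p,q)$-curve on every torus, and by Lemma \ref{lem:endoftori} the family extends transversely to neither end — at $f(0)$ (vertical line) the obstruction is $q\neq 0$, at $f(1)$ (horizontal line) the obstruction is $p\neq 0$ — so both periodic orbits become boundary orbits and we get an annulus-like section (genus zero because the section is, by construction, a cylinder over an interval). Conversely, if a Birkhoff section has exactly the two boundary orbits $\mu^{-1}(f(0))$, $\mu^{-1}(f(1))$ and no others, then it is transverse to every interior torus, so it has a well-defined type $(p,q)$; Lemma \ref{lem:endoftori} applied at the two ends forces $q\neq 0$ and $p\neq 0$, and transversality on all interior tori is exactly the statement that $(p,q)$ is never a normal vector. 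Items (1) and (2) are the degenerate sub-cases: a disk-like section at $f(0)$ must, by the same Lemma \ref{lem:endoftori} reasoning, have interior type $(p,0)$ with $p\neq 0$ (so that the only boundary orbit is the one at $f(0)$ and the other end $f(1)$, being horizontal, needs $p=0$ to close up — wait, this is the point to be careful about), hence one shows the relevant type is $(1,0)$, and transversality on $(0,1)$ says $(1,0)$, equivalently $(0,1)$ rotated, is never normal; unknottedness of the boundary and the genus formula then pin down that the section is a disk. Symmetrically for (2).

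For item (4): here the boundary orbit is a $(p,q)$-torus knot sitting on the \emph{interior} rational torus $\mu^{-1}(f(s_0))$. The section is built in two halves, on $s\in(0,s_0)$ and on $s\in(s_0,1)$. On $[s_0,1)$ we run a family of transverse curves that must limit onto the periodic orbit as $s\to s_0^+$ — by the new-coordinate analysis of Section \ref{sec:4} (Lemma \ref{lem:limitcurve}, Proposition \ref{pro:nontranstorus}), the type just above $s_0$ differs from $(p,q)$ by a multiple of $(P,Q)=(p,q)/k$ and, crucially, must extend transversely across $s=1$, i.e. its first coordinate must vanish by Lemma \ref{lem:endoftori}; this is possible iff $(0,1)$ is never a normal vector on $[s_0,1)$. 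Symmetrically, on $(0,s_0]$ one needs the type just below $s_0$ to extend transversely across $s=0$, i.e. second coordinate zero, possible iff $(1,0)$ is never a normal vector on $(0,s_0]$. The two halves glue along the $k$-fold covered boundary orbit. For the genus: the section is an immersed (at the boundary) surface whose two open pieces are cylinders, glued to a Seifert-type surface of the $k$-fold cover of the $(p,q)$-torus knot; computing its Euler characteristic explicitly from the rectangle picture (Fig.\ref{fig:nontrans}) — counting the $\alpha$ horizontal strands and the identifications imposed by the $k$-fold covering — and using that the two end-pieces are annuli contributing $0$, one extracts $g=\tfrac{k(|p|-1)(|q|-1)}{2}$, consistent with the classical torus-knot genus when $k=1$.

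\textbf{Main obstacle.} The genuinely delicate point is the \emph{converse} directions, especially in (4): one must rule out \emph{every} conceivable Birkhoff section with the prescribed single boundary orbit, not just the ones produced by the construction — in particular sections that fail to be transverse to several interior tori at once, or that wrap the boundary orbit with some unexpected multiplicity. The tool is Proposition \ref{pro:necefordisk}/Corollary \ref{cor:necefordisk} together with Proposition \ref{pro:nopq}: any torus disjoint from $\partial S$ carries a type with a zero coordinate, and tracking which coordinate is forced to vanish on which side of $s_0$ — using that $f(0)$ lies on a vertical line and $f(1)$ on a horizontal line — propagates the normal-vector condition across the whole interval. Making this propagation airtight, and separately verifying that the genus count is insensitive to the (homotopy) freedom in choosing the limiting horizontal strands, is where the real work lies; the forward constructions are essentially immediate from Lemmas \ref{lem:pqtransversecurve}, \ref{lem:endoftori} and \ref{lem:embedded}.
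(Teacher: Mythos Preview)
Your overall strategy is the paper's: reduce (1)--(3) to Lemmas~\ref{lem:pqtransversecurve}, \ref{lem:invarianthomology}, \ref{lem:endoftori} (equivalently Theorem~\ref{thm:main}), and handle (4) via Proposition~\ref{pro:nontranstorus} together with Corollary~\ref{cor:necefordisk} for the converse. Two concrete errors in the execution, though:

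\textbf{Items (1)--(2): the types are swapped.} You write that a disk-like section bounded by $\mu^{-1}(f(0))$ should have interior type $(p,0)$, then $(1,0)$. But $\mu^{-1}(f(0))$ is the \emph{vertical} circle, so by Lemma~\ref{lem:endoftori} a boundary occurs there iff $q\neq 0$; and to avoid a second boundary at the horizontal end $\mu^{-1}(f(1))$ you need $p=0$. Hence the interior type is $(0,q)$, and transversality on every $\mu^{-1}(f(s))$ is precisely ``$(0,1)$ is not a normal vector at $f(s)$''. There is no ``rotation'' relating $(1,0)$ to $(0,1)$ here; the flow direction on the torus \emph{is} the normal direction of $f$. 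You flagged your own uncertainty at this step, and indeed the bookkeeping is reversed. The paper simply says (1)--(3) follow from Theorem~\ref{thm:main} once this identification is made.

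\textbf{Item (4), genus: the two halves are unions of disks, not cylinders.} On $(0,s_0)$ the transverse curves have type $(kp,0)$, which by Lemma~\ref{lem:mntype} is $kp$ disjoint $(1,0)$-circles; as $s\to 0$ each such circle collapses to a \emph{point} on the vertical orbit, so $S^-$ is $kp$ disks (meeting along the limit curve on $T_{s_0}$), not a cylinder. Likewise $S^+$ is $kq$ disks. There is no separate ``Seifert-type surface'' being glued in between. The paper's computation (for $k=1$) is a direct Euler-characteristic count: $S^-\cap S^+$ is the single transversal arc on $T_{s_0}$ (so $\chi=1$), the $p$ disks of $S^-$ and the $q$ disks of $S^+$ are identified at the $pq-1$ interior crossings of that arc with the boundary orbit (Fig.~\ref{fig:nontrans}), giving $\chi(S^-)+\chi(S^+)=p+q-(pq-1)$ and hence $\chi(S)=p+q-pq$, i.e.\ $g=\tfrac{(p-1)(q-1)}{2}$.

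Your discussion of the converse in (4), and your identification of it as the delicate point, is accurate; the paper does exactly what you outline---Corollary~\ref{cor:necefordisk} forces one coordinate of the transverse-curve type to vanish on each side of $s_0$, Lemma~\ref{lem:endoftori} at the two ends determines \emph{which} coordinate, and this pins down the types as $(kp,0)$ and $(0,-kq)$, whence the normal-vector conditions.
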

\begin{proof}
	The cases in (1)-(3) can be proved in the same setting: the family of invariant tori $\mu^{-1}(f(s))$ for $s\in (0,1)$ are transverse to the flow. By Lemma \ref{lem:pqtransversecurve} and Lemma \ref{lem:invarianthomology}, $M_f$ admits a Birkhoff section with $(p,q)$-type transverse curves if and only if $(p,q)$ is not a normal vector to $f(s)$ for all $s\in(0,1)$. Therefore,statements (1)-(3) follow directly from Theorem \ref{thm:main}.
	
	In case (4), the boundary orbit lies on an invariant torus. Assume the periodic orbits on the torus are of $(p,q)$-type. For a Birkhoff section with a unique boundary orbit such that $S\cap T_{s_0}\ne 0$,  by Theorem \ref{thm:main}, it is necessary that $(1,0)$ is not a normal vector to $f(s)$ for all $s\in(0,s_0)$ and $(0,1)$ is not a normal vector to $f(s)$ for all $s\in(s_0,1)$. Moreover, both $(1,0)$ and $(0,1)$ can not be a normal vector at $f(s_0)$ by $(3)$ of Theorem \ref{thm:main}. We must have $S\cap T_s$ are of $(kp,0)$-type for $s\in(0,s_0)$, and of $(0,-kq)$-type for $s\in(s_0,1)$. Then the boundary $\partial S$ will be $|k|$-covering of the periodic orbit.
	
	For the sufficient part, we only prove the conclusion for $k=1$. $\partial S\cap T_{s_0}$ consists of two parts: the periodic orbit(the non-transversal part) and the transversal part. An illustrated for $\partial S\cap T_{s_0}$ is given in Fig. \ref{fig:nontrans} for $(p,q)=(4,3)$, where the blue segment represents the periodic orbit, the red segment is the transversal part and $T_{s_0}=\mathbb{R}^2/\mathbb{Z}^2$. We shall construct $(p,0)$-type transverse curves on $T_s$ for $s\in(0,s_0)$ such that the limiting curve on $T_{s_0}$ is the segments from $D$ to $O$, and then to $C$. This construction follows from \ref{lem:embedded} and the transversality is easy to ensure. Similarly, we can construct $(0,-q)$-type transverse curves on $T_s$ for $s\in(s_0,1)$ whose limit curve on $T_{s_0}$ is the segments from $D$ to $O$, and then to $A$. By Theorem \ref{thm:main}, $S$ is transversal at two boundary circle $\mu^{-1}(f(0))$ and $\mu^{-1}(f(1))$. Thus $S$ is a  Birkhoff section with boundary to be a $(p,q)$-type periodic orbit on $T_{s_0}$.
	
	To compute the genus of the Birkhoff section, it suffices to compute the Euler characteristic. In \cite{Dehornoy2022}, Dehornoy and Rechtman derived a formula for computing the Euler characteristic of a transverse surface in a homology sphere, expressed in terms of the linking numbers and self-linking numbers of its boundary orbits. Our setting is a simplified case of their general result. Since we construct the Birkhoff section explicitly, we present an elementary approach instead.
	
	If $X=X_1\cup X_2$ and $Y=X_1\cap X_2$, the Euler characteristic of $X$  is given by
	\begin{equation}
		\chi(X)=\chi(X_1)+\chi(X_2)-\chi(Y).
	\end{equation}
	
	Denote $S^-$ to be the part of $S$ in the solid torus $\mu^{-1}(f(s))(s\in [0,s_0])$ and $S^+$ to be the part in $\mu^{-1}(f(s))(s\in [s_0,1])$. Then $S^-\cap S^+$ is the transversal segment on $T_{s_0}$. Thus 
	$$\chi(S)=\chi(S^-)+\chi(S^+)-1.$$
	By our construction, $S^-$ is composed of $p$ disks intersecting at some points and $S^+$ is composed of $q$ disks intersecting at some points. These point are exactly the intersection points of the transversal segment with the boundary orbit(i.e. the intersection points of blue segment and red segment) with the two boundary points be excluded. Thus total multiplicity of these points is $pq-1$. We have
	\[\chi(S^-)+\chi(S^+)=p+q-pq+1.\]
	And 
	\[\chi(S)=p+q-pq.\]
	Thus, the genus of $S$ is 
	\begin{equation}
		g(s)=\frac{1-\chi(S)}{2}=\frac{(p-1)(q-1)}{2}.
	\end{equation}
	which is exactly the genus of boundary orbit.

\end{proof}

Note that the Birkhoff section becomes a global surface of section if $\partial S$ is embedded. There is a version of Theorem \ref{thm:birktoric} for global surface of section, with only a minor modification.
\begin{theorem}\label{thm:globaltoric}
	The standard Hamiltonian flow on $M_f$ has the following properties:
	\begin{itemize}
		\item[(1)] $\mu^{-1}(f(0))$ bounds a disk-like global surface of section if and only if $(0,1)$ is not a normal vector at $f(s)$ for all $s\in(0,1)$; 
		\item[(2)] $\mu^{-1}(f(1))$ bounds a disk-like global surface of section if and only if $(1,0)$ is not a normal vector at $f(s)$ for all $s\in(0,1)$; 
		\item[(3)] $\mu^{-1}(f(0))$ and $\mu^{-1}(f(1))$ bound an annulus-like global surface of section if and only if there exist $|p|=|q|=1$ such that $(p,q)$ is not a normal vector at $f(s)$ for all $s\in(0,1)$;
		\item[(4)] a $(p,q)$-type periodic orbit on a rational invariant torus $\mu^{-1}(f(s_0))(0<s_0<1)$ bounds a global surface of section if and only if $(1,0)$ is not a normal vector at $f(s)$ for all $s\in(0,s_0]$ and $(0,1)$ is not a normal vector at $f(s)$ for all $s\in[s_0,1)$. Moreover, the genus is $\frac{(p-1)(q-1)}{2}$.
	\end{itemize}
\end{theorem}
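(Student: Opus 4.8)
The plan is to obtain Theorem~\ref{thm:globaltoric} as a refinement of Theorem~\ref{thm:birktoric}, using the elementary fact that, by item~(2) of Definition~\ref{def:birkhoffsection}, a Birkhoff section $\iota\colon S\to M$ already restricts to an embedding on $S\setminus\partial S$; hence it is a global surface of section precisely when $\iota|_{\partial S}$ is injective as well, i.e.\ when each boundary orbit is simply covered (not multiply covered). Thus in every case the existence question for a global surface of section is the existence question for a Birkhoff section of the prescribed topological type \emph{together with} the requirement that all boundary multiplicities equal $1$. Since the conditions in Theorem~\ref{thm:birktoric} on the normal vectors of $f$ govern only which homology types admit transverse curves on the whole family $\mu^{-1}(f(s))$ (via Lemma~\ref{lem:pqtransversecurve} and Lemma~\ref{lem:invarianthomology}), and never the multiplicity, it will suffice in each case to check that, when these conditions hold, the transverse curves can be chosen of a \emph{primitive} homology class, so that the constructed section is automatically embedded; conversely a global surface of section of a given type is in particular a Birkhoff section of that type, so the necessity direction is inherited directly from Theorem~\ref{thm:birktoric}.

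Concretely, for (1) and (2) the disk-like Birkhoff section built in the proof of Theorem~\ref{thm:birktoric} fills the tori $\mu^{-1}(f(s))$, $s\in(0,1)$, with transverse curves whose class must be $(0,\pm1)$ (resp.\ $(\pm1,0)$): the vanishing of one coordinate is forced by Lemma~\ref{lem:endoftori} so that the section closes up transversally at the opposite end, and connectedness of a disk forces the class to be primitive by Lemma~\ref{lem:mntype}. Hence the boundary is automatically simply covered, the disk-like Birkhoff section is already a disk-like global surface of section, and the existence condition is unchanged, which is why statements (1)--(2) are verbatim the same as in Theorem~\ref{thm:birktoric}. For (3), an annulus-like section meets each $\mu^{-1}(f(s))$ in a single transverse curve of a primitive class $(m,n)$ with $\gcd(m,n)=1$; by Lemma~\ref{lem:endoftori} its two boundary circles are an $|n|$-fold cover of $\mu^{-1}(f(0))$ and an $|m|$-fold cover of $\mu^{-1}(f(1))$. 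Embeddedness forces $|m|=|n|=1$, and conversely if some $(m,n)$ with $|m|=|n|=1$ fails to be normal to $f$ on all of $(0,1)$, then Lemma~\ref{lem:pqtransversecurve} and Lemma~\ref{lem:invarianthomology} produce such an embedded annulus; this is precisely the strengthening of the hypothesis from ``$p\neq0$, $q\neq0$'' to ``$|p|=|q|=1$''.

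For (4), the explicit surface constructed in Theorem~\ref{thm:birktoric} for a $(p,q)$-type orbit on $\mu^{-1}(f(s_0))$ has a $|k|$-fold covered boundary and genus $\tfrac{k(|p|-1)(|q|-1)}{2}$, while the normal-vector condition for existence does not involve $k$. Specialising the construction to $k=1$ — i.e.\ taking transverse curves of class $(p,0)$ on $\mu^{-1}(f(s))$ for $s<s_0$ and $(0,-q)$ for $s>s_0$, chosen embedded as in Lemma~\ref{lem:embedded} and glued along a single transversal arc on $\mu^{-1}(f(s_0))$ — makes $\iota|_{\partial S}$ injective while the surface is already embedded off its boundary, so it is a global surface of section, and the genus formula becomes $\tfrac{(p-1)(q-1)}{2}$; necessity again follows from Theorem~\ref{thm:birktoric}(4). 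The one point requiring genuine care — and the main obstacle — is the claim that whenever the normal-vector conditions permit a Birkhoff section at all, the boundary multiplicities can be forced down to $1$ without sacrificing transversality or the connectivity type of $S$; this is exactly where the flexibility of Lemma~\ref{lem:pqtransversecurve} (any non-tangent direction, in particular any primitive one, is available on each torus) must be reconciled with the closing-up constraints at the ends dictated by Lemma~\ref{lem:endoftori}.
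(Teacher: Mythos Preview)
Your proposal is correct and follows exactly the approach the paper itself indicates: the paper does not give a separate proof of Theorem~\ref{thm:globaltoric} but simply remarks that it is Theorem~\ref{thm:birktoric} together with the observation that a Birkhoff section is a global surface of section precisely when $\partial S$ is embedded (simply covered). Your case-by-case analysis of when the boundary multiplicities can be forced to $1$---automatic in (1)--(2) by connectedness, equivalent to $|p|=|q|=1$ in (3), and obtained by taking $k=1$ in (4)---is exactly the ``minor modification'' the paper alludes to, and in fact supplies more detail than the paper does.
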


\begin{corollary}
	For the flow on the boundary of a weakly convex toric domiain, any  $(p,q)$-type periodic orbit with positive $p,q$ bounds a global surface of section with genus $\frac{(p-1)(q-1)}{2}$.
\end{corollary}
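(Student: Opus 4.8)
The plan is to deduce this directly from part~(4) of Theorem~\ref{thm:globaltoric}; the only real work is to check its two ``not a normal vector'' hypotheses using weak convexity. First I would locate the orbit: by the lemma identifying the flow direction on the invariant torus $\mu^{-1}(f(s))$ with the outer normal $\mathbf{n}(s)$ to $\partial\Omega$ at $f(s)$, the periodic orbits on $\mu^{-1}(f(s))$ are exactly the closed curves whose homology type is the one determined by $\mathbf{n}(s)$. Hence a periodic orbit of $(p,q)$-type with $p,q>0$ must lie on a torus $\mu^{-1}(f(s_0))$ for which $\mathbf{n}(s_0)$ is a positive multiple of $(p,q)$, and necessarily $s_0\in(0,1)$: the two circles $\mu^{-1}(f(0))$ and $\mu^{-1}(f(1))$ are the binding orbits and cannot be of type $(p,q)$ with $p$ and $q$ both nonzero.

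Next I would verify the normal hypotheses, and this is the only place where weak convexity is used. Since $\Omega$ is convex, the direction of $\mathbf{n}(s)$ turns monotonically as $s$ runs over $[0,1]$, staying in the closed first quadrant and sweeping from the ``$(0,1)$-side'' at $s=0$ to the ``$(1,0)$-side'' at $s=1$. Consequently $\{s:\mathbf{n}(s)\parallel(0,1)\}$ is an interval of the form $[0,s'']$ (possibly empty or a single point) and $\{s:\mathbf{n}(s)\parallel(1,0)\}$ is an interval of the form $[s',1]$; and since $\mathbf{n}(s_0)\parallel(p,q)$ with $p,q>0$ lies strictly in the interior of the quadrant, monotonicity forces $s''\leq s_0\leq s'$. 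Therefore $(1,0)$ is not a normal vector at $f(s)$ for any $s\in(0,s_0]$ and $(0,1)$ is not a normal vector at $f(s)$ for any $s\in[s_0,1)$, which are precisely the hypotheses of Theorem~\ref{thm:globaltoric}(4). Applying that theorem to $\mu^{-1}(f(s_0))$ then shows that the $(p,q)$-type orbit bounds a global surface of section of genus $\frac{(p-1)(q-1)}{2}$, which is the assertion.

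I expect the monotonicity step to be the point requiring the most care: making precise that, for a weakly convex toric domain, a coordinate normal direction can occur only on an interval of parameters touching an endpoint of $[0,1]$, never as an isolated interior value sitting on the ``wrong side'' of $s_0$. I would make this rigorous by writing $\partial\Omega$ locally as the graph $y=\psi(x)$ of a convex non-increasing function with $\psi>0$ on the interior and $\psi(b)=0$: then $\psi'<0$ strictly on the interior, since a flat piece would force $\psi\equiv0$ there, contradicting the embeddedness of $f$; hence $(-\psi'(x),1)$, which is a positive multiple of the outer normal, has both entries positive away from the endpoints, and the monotonicity of $\psi'$ yields the interval structure used above. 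Everything else is a direct appeal to Theorem~\ref{thm:globaltoric}, together with the fact that the boundary of a global surface of section is simply covered, so that the genus formula of part~(4) applies verbatim.
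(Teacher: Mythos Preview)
Your approach is exactly what the paper intends: the corollary is stated immediately after Theorem~\ref{thm:globaltoric} without proof, so it is meant to follow directly from part~(4), and you correctly reduce the problem to verifying the two ``not a normal vector'' hypotheses via convexity of $\Omega$.

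Two small slips in the detailed justification, neither of which affects the strategy. First, for a \emph{weakly} convex toric domain (meaning only that $\Omega$ is convex), the outer normal along $f$ need not stay in the closed first quadrant; it can point into the second or fourth quadrant near the endpoints. Second, the boundary function $\psi$ is concave, not convex, so your argument that a flat piece forces $\psi\equiv 0$ does not go through as written. The clean replacement for both is: convexity of $\Omega$ makes the angle of the outer normal monotone non-increasing in $s$; at $s_0$ this angle lies strictly in $(0,\pi/2)$ since $\mathbf{n}(s_0)\parallel(p,q)$ with $p,q>0$; and because the origin lies in $\Omega$, the supporting-hyperplane inequality $\mathbf{n}(s)\cdot f(s)\ge 0$ rules out the directions $(-1,0)$ and $(0,-1)$ at any interior $f(s)$. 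Monotonicity then forces the normal angle into $[\arctan(q/p),\pi)$ on $(0,s_0]$ and into $(-\pi/2,\arctan(q/p)]$ on $[s_0,1)$, which excludes $(1,0)$ on the former interval and $(0,1)$ on the latter, exactly as Theorem~\ref{thm:globaltoric}(4) requires.
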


The theorems above consider Birkhoff sections in $M_f$ with a single boundary orbit. By Proposition \ref{pro:boundary_exist}, one can easily construct examples such that any Birkhoff section for the flow must have multiple boundary orbits.

\begin{lemma}[Lemma $3$ in \cite{Hofer1993}]\label{lem:hofer1993}
	The standard Hamiltonian flow on a star-shape hypersurface in $\mathbb{R}^4$ is equivalent to a Reeb flow on the tight $S^3$.
\end{lemma}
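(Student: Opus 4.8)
The plan is to realise the characteristic foliation of a star-shaped hypersurface $\Sigma\subset\mathbb{R}^4$ as the Reeb foliation of the restriction of the standard Liouville form, and then to identify $\Sigma$ with the tight $S^3$ by a contactomorphism that carries this Reeb flow to a Reeb flow on $S^3$.

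First I would unwind what star-shapedness gives: the radial (Liouville) vector field $Y=\tfrac{1}{2}\sum_{i=1}^2(x_i\partial_{x_i}+y_i\partial_{y_i})$ is everywhere transverse to $\Sigma$ and points outward. Since $\iota_Y\omega_0=\lambda_0:=\tfrac{1}{2}\sum_i(x_i\,dy_i-y_i\,dx_i)$ with $d\lambda_0=\omega_0$ symplectic, the one-form $\alpha:=\lambda_0|_\Sigma$ satisfies $\alpha\wedge d\alpha\ne 0$, i.e. it is a contact form on $\Sigma$ (the classical ``hypersurface of contact type'' computation). Its Reeb vector field $R_\alpha$ satisfies $\iota_{R_\alpha}d\alpha=0$ with $d\alpha=\omega_0|_{T\Sigma}$, so $R_\alpha$ spans the characteristic line field $\ker(\omega_0|_{T\Sigma})$. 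If $\Sigma=H^{-1}(c)$ is a regular level set, then $X_H|_\Sigma$ also spans this same line field, so $R_\alpha$ and $X_H|_\Sigma$ differ only by a positive function; their flows are therefore equivalent, a diffeomorphism (here the identity) taking one to a positive time reparametrisation of the other.

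Next I would identify $(\Sigma,\ker\alpha)$ with the standard tight contact $S^3$. Both $\Sigma$ and the unit sphere $S^3$ are transverse to $Y$, and since $\Sigma$ is star-shaped the closed region they cobound is a compact piece of the symplectization of $S^3$: using the Liouville flow of $Y$ one writes this region as a graph region over $S^3$ inside $(\mathbb{R}\times S^3,\,d(e^s\alpha_{\mathrm{std}}))$, and then Gray stability applied to the straightening of the graph shows that the contact structure induced on $\Sigma$ is contactomorphic to $\xi_{\mathrm{std}}=\ker\alpha_{\mathrm{std}}$ on $S^3$, which is tight by Bennequin's theorem. Let $\psi:\Sigma\to S^3$ be the resulting contactomorphism; then $\beta:=\psi_*\alpha$ is a contact form on $S^3$ with $\ker\beta=\xi_{\mathrm{std}}$, hence $R_\beta$ generates a Reeb flow on the tight $S^3$, and $\psi$ conjugates $R_\alpha$ to $R_\beta$. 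Composing with the previous paragraph shows the standard Hamiltonian flow on $\Sigma$ is conjugate, up to a positive time change, to the Reeb flow of $\beta$, which is the assertion.

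The point needing the most care is the identification step: one must check that the Liouville flow genuinely presents the region between $\Sigma$ and $S^3$ as a graph region in the symplectization (so that Gray stability applies) and, relatedly, that the outward orientation of $Y$ makes $\beta$ a \emph{positive} contact form for $\xi_{\mathrm{std}}$ rather than for the opposite contact structure. One should also fix once and for all what ``equivalent'' means here --- conjugacy by a diffeomorphism together with an allowed reparametrisation of time --- since $R_\alpha$ and the raw Hamiltonian field need not have matching speeds. The remainder is a routine unwinding of the contact-type formalism.
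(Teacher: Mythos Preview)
The paper does not prove this lemma at all: it is stated with a citation to Hofer (``Lemma~3 in~\cite{Hofer1993}'') and then used as a black box in the proof of Theorem~\ref{thm:nonsimple}. Your argument is the standard one --- restrict the Liouville form $\lambda_0$ to the star-shaped hypersurface to get a contact form whose Reeb field spans the characteristic line bundle, then use the radial Liouville flow to identify the hypersurface contactomorphically with $(S^3,\xi_{\mathrm{std}})$ --- and it is correct. One small simplification: since $\Sigma$ is star-shaped, the radial scaling $p\mapsto p/\|p\|$ (or more precisely the time-$\log r(p)$ map of the Liouville flow) already gives an explicit diffeomorphism $\psi:\Sigma\to S^3$ with $\psi^*\alpha_{\mathrm{std}}=e^{-f}\alpha$ for some smooth $f$, so you can bypass Gray stability entirely and read off $\ker(\psi_*\alpha)=\xi_{\mathrm{std}}$ directly.
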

\begin{theorem}[Theorem 1.1 in \cite{vanKoert2022}]\label{thm:nonsimple}
    For any $k>0$ an integer, there exist a star-shaped $M_f$ whose flow is equivalent to a Reeb flow on $S^3$ such that any Birkhoff section for the flow must has at least $k$ distinct boundary orbits.
\end{theorem}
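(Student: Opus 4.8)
The plan is to exhibit an explicit family of curves $f_k$ whose boundary hypersurfaces $M_{f_k}$ are star-shaped (so that Lemma~\ref{lem:hofer1993} applies and the flow is equivalent to a Reeb flow on the tight $S^3$), and such that the set of normal directions realized along $\partial\Omega$ is so rich that every possible homology type $(p,q)$ of a transverse curve is blocked from persisting across the whole family, unless one pays with many boundary orbits. Concretely, I would choose $f_k$ to be a smooth, star-shaped (with respect to the origin) embedded curve from $(0,a)$ to $(b,0)$ whose outer normal direction, as $s$ ranges over $(0,1)$, sweeps out a large arc of $\mathbb{RP}^1$ — in fact I want the unit normal to wind back and forth so that the set of realized slopes contains all rationals $p/q$ with $|p|+|q|\le N_k$ for a suitable $N_k\to\infty$. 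Since $M_f$ consists of a single family of invariant tori (the lemma after Definition~\ref{def:toric}), a Birkhoff section $S$ restricted to $\mu^{-1}(f(s))$ has a well-defined homology type that is piecewise constant in $s$ and can only jump when $\Gamma_s=S\cap\mu^{-1}(f(s))$ fails to be transverse, i.e. at an $s_0$ where $\mu^{-1}(f(s_0))$ carries a boundary orbit.

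The key steps, in order. First, I would record the local obstruction: by Lemma~\ref{lem:pqtransversecurve}, on the torus $\mu^{-1}(f(s))$ a curve of type $(p,q)$ is transverse iff $(p,q)$ is not parallel to the normal vector at $f(s)$; so if the normal direction at $f(s_0)$ equals the slope of $\Gamma_s$ for $s$ near $s_0$, transversality must break at $s_0$ and, by Theorem~\ref{thm:main}(3) (or Proposition~\ref{pro:nontranstorus}), $\mu^{-1}(f(s_0))$ is a boundary orbit of $S$. Second, I would show that between two consecutive boundary orbits the homology type is constant; hence if $S$ has $m$ boundary orbits, the curve $s\mapsto \text{slope}(\Gamma_s)$ takes at most $m+1$ distinct rational values on $(0,1)$ (with the two end circles $\mu^{-1}(f(0)),\mu^{-1}(f(1))$ of types $(0,1),(1,0)$ handled by Theorem~\ref{thm:main}(1)--(2), forcing the near-end types to be $(\cdot,0)$ and $(0,\cdot)$ respectively). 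Third — the design step — I would build $f_k$ so that the normal directions realized along $(0,1)$ include at least $k+2$ distinct rational slopes \emph{and}, crucially, so that any transverse family must actually \emph{change} its slope that many times: this is where Proposition~\ref{pro:boundary_exist} does the real work. If on some subinterval $(a',b')\subset(0,1)$ the normal direction sweeps through \emph{all} of $\mathbb{RP}^1$, then no single $(p,q)$ survives on that whole subinterval, so $\partial S$ meets $\bigcup_{s\in(a',b')}\mu^{-1}(f(s))$; I would arrange $k$ such disjoint subintervals $(a_1,b_1),\dots,(a_k,b_k)$ along $f_k$ (each corresponding to the normal vector doing a full loop), forcing at least one boundary orbit in each, hence at least $k$ distinct boundary orbits. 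Fourth, I would check that a curve $f_k$ with this oscillating-normal behavior can indeed be taken smooth, embedded, and star-shaped with $f_k(0)=(0,a)$, $f_k(1)=(b,0)$ — e.g. by taking $f_k$ to be a graph in polar coordinates $r=\rho_k(\vartheta)$ over $\vartheta\in[0,\pi/2]$ with $\rho_k$ positive and oscillating enough that the normal angle $\arctan(\rho_k'/\rho_k)-\vartheta$ has large total variation — so that Lemma~\ref{lem:hofer1993} applies and the flow is a tight-$S^3$ Reeb flow.

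I expect the main obstacle to be the third step: verifying rigorously that the oscillation of the normal direction along $f_k$ genuinely forces $k$ \emph{distinct} boundary orbits rather than, say, one highly multiply-covered orbit or boundary orbits that coincide as sets in $M_{f_k}$. The subtlety is that $\partial S$ on a rational torus $\mu^{-1}(f(s_0))$ is a single periodic orbit (possibly multiply covered), so "at least one boundary orbit per subinterval $(a_i,b_i)$'' only yields $k$ distinct orbits once the subintervals are disjoint and each lies strictly inside $(0,1)$ — hence the orbits live on distinct tori and are genuinely distinct curves in $M_{f_k}$. I would secure this by choosing the $(a_i,b_i)$ pairwise disjoint with disjoint closures inside $(0,1)$, and by invoking Proposition~\ref{pro:boundary_exist} separately on each, noting that a boundary orbit forced inside $(a_i,b_i)$ cannot equal one forced inside $(a_j,b_j)$ for $i\ne j$ since they sit on different invariant tori. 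A secondary technical point, which I would handle by an elementary construction, is producing $\rho_k$ with the desired large total variation of the normal angle while keeping $f_k$ embedded and $C^\infty$ up to the endpoints with the prescribed tangencies — this is a routine bump-function / smoothing argument but needs to be stated cleanly.
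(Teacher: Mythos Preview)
Your proposal is correct and follows essentially the same approach as the paper: arrange $k$ disjoint subintervals of $(0,1)$ on each of which the outer normal direction of $f$ sweeps through all of $\mathbb{RP}^1$, invoke Proposition~\ref{pro:boundary_exist} on each to force a boundary orbit there, observe that orbits in disjoint parameter intervals lie on distinct invariant tori and are therefore distinct, and finally ensure the curve is star-shaped so that Lemma~\ref{lem:hofer1993} applies. The paper's only difference is cosmetic: instead of an abstract oscillating polar graph $r=\rho_k(\vartheta)$, it uses an explicit model for each subinterval, namely a semicircle whose diameter is the base of an isosceles triangle with vertex at the origin (along such a semicircle the normal rotates through $\pi$, hence all of $\mathbb{RP}^1$, and star-shapedness is immediate), and then takes $k$ such non-overlapping triangles in the first quadrant. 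Your steps one and two (tracking homology types and counting slope changes) are not needed once you have Proposition~\ref{pro:boundary_exist}; the paper bypasses them entirely.
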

\begin{proof}
	Consider an isosceles triangle with its vertex at the origin. There is a unique semicircle whose diameter is the base of the triangle, forming a star-shaped curve. By Proposition \ref{pro:boundary_exist}, if the boundary curve of a toric domain contains such a semicircle, then any Birkhoff section for the flow will have a boundary orbit contained in the semicircle part. 
	
	Now the star-shape curve $f:[0,1]\rightarrow \mathbb{R}^2$ is constructed as following. Take $k$ non-overlapping isosceles triangles in the first quadrant, each with its vertex at the origin. Then each isosceles triangle defines a unique star-shape semicircle such that the base is diameter of the semi-circle. Let $f$ be a star-shape curve containing these $k$ different semicircles. Then any Birkhoff section of $M_f$ must have at least $k$ boundary orbits. 
	
	Note that the boundary of a toric domain is star-shape in $\mathbb{R}^4$ if and only if $\partial \Omega$ is star-shape in $\mathbb{R}^2$. By Lemma \ref{lem:hofer1993}, the flow on $M_f$ is equivalent to a Reeb flow on tight $S^3$. The proof is complete.
\end{proof}
This gives a different example from that in \cite{vanKoert2020} for Reeb flows on tight $S^3$ without a simple global surface of section. 

Combining the result of Edtair \cite{Edtmair2024}, which states that the cylindrical capacity of a dynamically convex domain in $\mathbb{R}^4$ agrees with the least symplectic area of a disk-like global surface of section of the Reeb flow on the boundary of the domain. We can reprove the following result of Gutt and Hucthings.
\begin{theorem}[Theorem 1.7 in \cite{Hutchings2022}]
	If $X_\Omega$ is a monotone toric domain in $\mathbb{R}^4$, then $c_B(X_\Omega)=c_Z(X_\Omega)$.
\end{theorem}
\begin{proof}
	By an approximation argument, we can assume that $X_\Omega$ is strictly monotone(i.e. both components of the out normal vector at at every boundary point are positive). It is shown in \cite{Hutchings2022} that strictly monotone is equivalent to dynamically convex for toric domains in $\mathbb{R}^4$. Denote $\Delta(a)$ to be the triangle with vertices $(0,0),(a,0)$ and $(0,a)$. The toric domain corresponding to $\Delta(a)$ is the standard ball $B(a):=\Big\{z\in \mathbb{R}^4\Big| \pi|z|^2\le a\Big\}$. 
	
	For a strictly monotone toric domain $X_\Omega$, let $\Delta(r)$ be the largest triangle in $X_\Omega$. Clearly, we have $r\le c_B(X_\Omega)$.
	We assume the boundary of $X_\Omega$ is a smooth curve $f:[0,1]\rightarrow \mathbb{R}^2$ as before, and $\Delta(r)$ touch the curve at some point $f(s_0)$. Then $f(s_0)$ is either a point on axis or a point on the curve with $(1,1)$ to be a out normal vector. In either case, by Theorem \ref{thm:globaltoric}, the periodic orbit on $\mu^{-1}(f(s_0))$ bounds a disk-like global surface of section. It easy to verify that the symplectic area of this global surface of section is $r$. By Theorem 1.9 in \cite{Edtmair2024}, we have $c_Z(X_\Omega)= r$. The proof is complete.
	
\end{proof}

\subsection{Energy hypersurface of separable Hamiltonian}
A Hamiltonian is said to be separable if the Hamiltonian function can written as the sum of several independent two-dimensional Hamiltonian functions, i.e. $H=\sum_{i=1}^{n}H_i(x_i,y_i)$. In this section, we'll consider the case when $n=2$ and every regular energy level set of $H$ will be a three manifold with a non-vanishing Hamiltonian vector field. It is obvious that every regular energy level set of separable Hamiltonian $H=H_1(x_1,y_1)+H_2(x_2,y_2)$ admits a toric invariant foliation. Thus our results can be applied to study Birkhoff sections for such flows. 

For simplicity, we assume that: (i) both $H_1$ and $H_2$ has a unique global minimum point with the value to be $0$; (ii) all critical points of $H_1$ and $H_2$ are isolated. 

Let $H=c$ be a regular level set, then the corresponding three manifold is foliated by the product spaces $\{H_1=s\}\times \{H_2=c-s\}$ with $s\in [0,c]$. If both $H_1$ and $H_2$ take regular value, then the product will be a torus(or a union of tori, depending on the topology of the level sets). If either $H_1$ or $H_2$ takes a critical value, the product will be a broken torus or a periodic orbit.

It should be noted that the boundary of a toric domain is not generally a level set of a separable Hamiltonian. It is obvious that the boundary of a monotone toric domain can be also regard as the level set of a separable Hamiltonian. However, if the toric domain is not monotone, it may be not possible to be realize its boundary as such a level set. Even when this is possible, $H_1$ or $H_2$ must contain a continuous set of critical points, which is degenerate.

To maintain consistency, for a torus in the product $\{H_1=s\}\times \{H_2=c-s\}$, we shall let $H_1=s$ and $H_2=c-s$ correspond to horizon line and vertical line, respectively. With this convention, the following lemma is immediate.
\begin{lemma}\label{lem:separable}
	\begin{itemize}
		\item [(i)] For every invariant torus, we define the flow lines(with orientation) of $H_1$ and $H_2$ to be the generators of first homology of the torus. This provides a consistent way to define homology types of closed curves on all invariant tori;
		\item [(ii)] On each invariant torus, the vector of linear motion is in the first closed quadrant;
		\item [(iii)] Any saddle point(neither local minimum or maximum point) of $H_1$ corresponds to a broken torus with invariant vertical lines. Any saddle point of $H_2$ corresponds to a broken torus with invariant horizon lines.
	\end{itemize}
\end{lemma}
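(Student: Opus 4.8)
The plan is to reduce every assertion to elementary two--dimensional phase portraits, using that for a separable Hamiltonian the flow on $\{H=c\}$ is the superposition of two commuting, intrinsically defined flows. Write $X_{H_1}=(\partial_{y_1}H_1,-\partial_{x_1}H_1,0,0)$ and $X_{H_2}=(0,0,\partial_{y_2}H_2,-\partial_{x_2}H_2)$. First I would record the product structure: $X_{H_1}$ and $X_{H_2}$ are globally defined vector fields on the ambient $\mathbb{R}^4$, are tangent to $\{H=c\}$ (since $\{H_1,H_2\}=0$, whence $\{H,H_1\}=\{H,H_2\}=0$ on the level set), commute, and sum to $X_H$. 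A regular invariant torus is a connected component of a fibre $\{H_1=s\}\times\{H_2=c-s\}$ over a value $s$ that is regular for $H_1$ with $c-s$ regular for $H_2$; since a compact regular level set of a function on a surface is a disjoint union of circles, such a component is the product of one periodic orbit of $X_{H_1}$ with one periodic orbit of $X_{H_2}$. Reparametrising each factor by its own flow (with positive periods $T_1=T_1(s)$ and $T_2=T_2(c-s)$) identifies the torus with $\mathbb{R}/T_1\mathbb{Z}\times\mathbb{R}/T_2\mathbb{Z}$, on which $X_H$ becomes $\partial_{\theta_1}+\partial_{\theta_2}$; after rescaling $(u,v)=(\theta_1/T_1,\theta_2/T_2)$ the flow is the linear flow with direction vector $(1/T_1,1/T_2)$, depending continuously on $s$.

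For part (i), I take the two generators of $H_1(T;\mathbb{Z})$ to be the periodic orbits of $X_{H_1}$ and of $X_{H_2}$ through a chosen point, oriented by the respective vector fields; these generate because $T$ is literally their product. Consistency across the various families of tori is then automatic: ``horizontal'' means the $X_{H_1}$--direction and ``vertical'' the $X_{H_2}$--direction, and $X_{H_1},X_{H_2}$ are fixed vector fields on the whole manifold, so the identification of the two generating directions is independent of which family one sits in --- this is precisely the hypothesis that the natural generators of different families coincide, so the $(p,q)$--bookkeeping of Sections \ref{sec:2}--\ref{sec:3} is globally meaningful. Part (ii) is then immediate from the first paragraph: in the $(u,v)$--rectangle the first generator is the horizontal circle, the second the vertical circle, and the flow direction $(1/T_1,1/T_2)$ has strictly positive entries; equivalently, we oriented the generators along the flows, so the flow vector of the product necessarily has non-negative coordinates, i.e.\ it lies in the closed first quadrant.

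For part (iii), let $p$ be a saddle of $H_1$ and $s_0=H_1(p)$; since the unique global minimum of $H_1$ has value $0$ we have $s_0>0$, and since $\{H=c\}$ is regular, $H_1$ and $H_2$ cannot both be critical at a point of it, which forces $c-s_0$ to be a regular value of $H_2$ (otherwise pairing $p$ with a critical point of $H_2$ at level $c-s_0$ would produce a critical point of $H$ lying on $\{H=c\}$) and $c-s_0>0$; thus $\{H_2=c-s_0\}$ is a nonempty union of circles. Let $\Sigma$ be the connected component of the singular level set $\{H_1=s_0\}$ containing $p$. Because all critical points of $H_1$ are isolated and $\Sigma$ is compact, $\Sigma$ minus its critical points is a finite union of non-periodic orbits of $X_{H_1}$, each of which (by the planar Poincar\'e--Bendixson dichotomy inside the level set) limits to a critical point of $H_1$ lying on $\Sigma$ as $t\to\pm\infty$; so $\Sigma$ is a finite ``separatrix graph'' joined at those critical points. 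Fixing a circle component $C$ of $\{H_2=c-s_0\}$ and setting $B=\Sigma\times C$, I would then check that $B$ meets Definition \ref{def:brokentorus} with \emph{vertical} invariant lines: each critical point $q\in\Sigma$ gives $\{q\}\times C$, on which $X_{H_1}$ vanishes so that $X_H$ is the $X_{H_2}$--rotation on $C$, a periodic orbit that is an orbit of $X_{H_2}$ --- hence a vertical line; each separatrix $\sigma\subset\Sigma$ gives an open annulus $\sigma\times C$ whose two boundary circles are the periodic orbits at the two end-limits of $\sigma$; there is no continuum of periodic orbits (separatrices are genuinely non-periodic) and $B$ has empty interior, so $B$ is regular; and $B$ separates families of invariant tori in the sense of Lemma \ref{lem:twofamily}, because for $s$ slightly below and slightly above $s_0$ the components of $\{H_1=s\}$ near $\Sigma$ lie on the two topologically distinct sides of the standard deformation of a singular level. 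Interchanging $H_1$ with $H_2$, hence horizontal with vertical, gives the statement for saddles of $H_2$.

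The only genuinely non-formal step is part (iii): pinning down that $\Sigma$ is precisely a finite union of separatrices between critical points of $H_1$ --- this is where the isolated--critical--point hypothesis and the Poincar\'e--Bendixson alternative enter, and one must also accommodate degenerate but isolated saddles, whose level set near $p$ is $2k$ rays rather than four --- and then verifying that $\Sigma\times C$ genuinely separates the two nearby families of tori, as required by Definition \ref{def:brokentorus} and Lemma \ref{lem:twofamily}. Parts (i) and (ii), once the product picture of the first paragraph is written down, are bookkeeping.
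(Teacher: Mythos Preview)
The paper does not actually prove this lemma: it simply declares ``the following lemma is immediate'' after fixing the convention that the $H_1$--factor is horizontal and the $H_2$--factor is vertical. Your proposal is a correct and careful expansion of exactly the argument the paper has in mind --- the product structure $\{H_1=s\}\times\{H_2=c-s\}$, the global vector fields $X_{H_1},X_{H_2}$ as intrinsic generators, positivity of the periods for (ii), and the separatrix picture at a saddle for (iii). The one point you single out as non-formal, namely that the singular level component $\Sigma$ of $H_1$ through a saddle is a finite separatrix graph and that $\Sigma\times C$ satisfies Definition~\ref{def:brokentorus}, is precisely where the paper's standing hypothesis of isolated critical points is doing the work; your appeal to the planar phase-portrait alternative is the right justification, and nothing further is needed. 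One cosmetic remark: the inequality $c-s_0>0$ in (iii) is really the observation that only saddles with $s_0\in(0,c)$ appear on $\{H=c\}$ (if $s_0\ge c$ the saddle either does not meet the level set or pairs with the minimum of $H_2$ to give a critical point of $H$ on it), which you use implicitly.
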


Note that we do not require the critical points to be non-degenerate. With a slight abuse of terminology, we will refer to saddle points as \textsl{hyperbolic critical points} and to maxima or minima as \textsl{elliptic critical points} in the following discussion.

Since the critical points of $H_1$ and $H_2$ are isolated. There are only finitely many families of tori for regular value $H=c$. By (ii) of Lemma \ref{lem:separable}, every $(p,q)$ with $p,q$ having opposite signs can be a homology type of transverse curves in each family of tori. By applying Theorem \ref{thm:main}, we can construct varies types of Birkhoff sections in $H=c$ by Theorem \ref{thm:main} such that the boundary of the Birkhoff section corresponds to local maximum or local minimum points of $H_1$ or $H_2$.
\begin{theorem}
	The flow on any regular level set of $H$ admits a Birkhoff section.
\end{theorem}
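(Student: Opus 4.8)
The plan is to realize a Birkhoff section on $H=c$ by the inductive "propagate the homology type across the foliation" scheme already outlined in the proof of the Corollary following Theorem \ref{thm:main_intro}, using the separable structure to guarantee the process terminates. First I would invoke Lemma \ref{lem:separable}: part (i) gives a globally consistent choice of generators, so the homology type $(p,q)$ of a transverse curve is well-defined on every invariant torus and on every broken torus in $H=c$; part (ii) says the linear flow vector on each regular torus lies in the closed first quadrant; and part (iii) classifies the broken tori (hyperbolic critical points of $H_1$ give broken tori with vertical invariant lines, hyperbolic critical points of $H_2$ give horizontal ones), while the families of tori terminate at periodic orbits coming from elliptic critical points of $H_1$ or $H_2$. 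Crucially, by assumption (ii) in the setup all critical points are isolated, so $H=c$ decomposes into \emph{finitely many} families of regular tori glued along finitely many broken tori and capped at finitely many periodic orbits.

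Next I would make the key quantitative choice. Since the flow vector on every regular torus has both components $\ge 0$, by Lemma \ref{lem:pqtransversecurve} any $(p,q)$ with $p>0$ and $q<0$ (strictly, so that $(p,q)$ is never a tangent direction of a first-quadrant vector) is admissible as a transverse-curve type on \emph{every} regular torus simultaneously. So I would fix once and for all a type with strictly negative slope and carry it across the entire foliation. The only places where transversality can fail are (a) where a family ends at a periodic orbit, and (b) where two families meet along a broken torus. At a periodic orbit coming from an elliptic critical point of $H_1$ we have a vertical invariant line, and by Lemma \ref{lem:endoftori} (= part (1) of Theorem \ref{thm:main}) the chosen curve, having $q\ne 0$, simply wraps $|q|$ times around that orbit, which becomes a boundary orbit of $S$ — this is allowed, a Birkhoff section may have boundary. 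Symmetrically at elliptic critical points of $H_2$. At a broken torus with vertical invariant lines I would use part (4) of Theorem \ref{thm:main}: keeping the same first component $p$ on both sides ($p\ne0$, which holds) and distributing the (common, negative) total $q$ among the annuli as in the proof of Proposition \ref{pro:nontransbroken} makes $B$ transverse; symmetrically for horizontal broken tori via part (5). Thus with a single negative-slope type, the construction meets no unavoidable obstruction other than picking up boundary orbits at the elliptic ends.

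Then I would assemble the surface: on each regular torus take the chosen transverse curve depending smoothly on $s$ (Corollary \ref{cor:homotopy} gives contractibility of the relevant space, so smooth dependence is free), glue across each broken torus using the distribution produced by Proposition \ref{pro:nontransbroken}, and cap at the periodic orbits at the ends. By Lemma \ref{lem:invarianthomology} the homology type is automatically consistent within each family, and by Lemma \ref{lem:mntype} each piece is an embedded multi-curve. The closure of the union is a compact immersed surface $S$ whose interior is embedded and transverse to $X$, whose boundary consists of (possibly multiply covered) periodic orbits at the elliptic critical values, and which is hit by every flow line in forward and backward time because it meets every regular torus in a genuine transverse curve and the non-regular set is measure zero. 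Hence $S$ is a Birkhoff section.

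The main obstacle I anticipate is not any single estimate but the bookkeeping at the broken tori: one must verify that the required compatibility system — assign to each annulus of $B$ an integer so that, for every face of the associated directed graph on $S^2$, the face's integer equals the sum of its edges' integers — is solvable given only the global balance $\sum_i q_i^- = \sum_j q_j^+$. This is exactly the layer-by-layer argument in the proof of Proposition \ref{pro:nontransbroken}, so strictly speaking it is already done; the only thing to check here is that our uniform negative-slope choice is consistent with the hypotheses of that proposition across \emph{all} broken tori at once (it is, since $p\ne0$ and the $q$-sums on the two sides of each broken torus agree because both equal the common $q$ of the single chosen type). A secondary subtlety is the degenerate case where $H_1$ or $H_2$ has a non-isolated-looking elliptic behaviour or where a level set $\{H_1=s\}$ is disconnected so a "torus" is really a disjoint union of tori; but disconnectedness only multiplies the number of families, still finite, and the same argument applies component by component, so I would dispatch it with a remark rather than a separate argument.
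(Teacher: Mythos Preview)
Your overall plan coincides with the paper's brief argument: isolated critical points give finitely many families, Lemma \ref{lem:separable}(ii) makes any $(p,q)$ with $p>0,\ q<0$ transverse on every regular torus, and one then appeals to Theorem \ref{thm:main}. But there is a real gap in your treatment of the broken tori.

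You claim that with a single uniform type $(p,q)$ the hypothesis of part (4) holds because ``the $q$-sums on the two sides of each broken torus agree because both equal the common $q$ of the single chosen type''. This is false. The condition in (4) is $\sum_{i=1}^{k} q_i^{-}=\sum_{j=1}^{l} q_j^{+}$, which for a uniform $q$ reads $kq=lq$, i.e.\ $k=l$. At a non-degenerate saddle of $H_1$ the level curve $\{H_1=s_0\}$ is a figure-eight: for $s<s_0$ it splits into two circles and for $s>s_0$ it is a single circle, so after taking the product with the $H_2$-circle one has $k=2$, $l=1$. Hence your uniform choice violates (4), and the same failure occurs in (5) at saddles of $H_2$. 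The annulus-distribution step from the proof of Proposition \ref{pro:nontransbroken} presupposes the sum condition; it cannot repair a mismatch $kq\ne lq$.

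The fix is not to insist on a global $(p,q)$. Propagate instead: at each vertical broken torus keep the common $p$ and split the incoming $q$-total among the outgoing families so that each outgoing $q_j^+$ remains negative (and symmetrically at horizontal broken tori). Since every opposite-sign pair is transverse on every regular torus, part (3) is never invoked inside a family. This is precisely the inductive scheme of the Corollary following Theorem \ref{thm:main_intro}, which already applies verbatim once you have observed (as you correctly do) that there are only finitely many broken tori; the paper's one-paragraph proof of the theorem is essentially that citation together with the opposite-sign observation.
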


Clearly, the topology of the level set $H=c$ depends on $c$, which can be derived from classical Morse theory. Similarly, the possible Birkhoff sections in $H=c$ also depend on the critical points of $H_1$ and $H_2$. Here we give a criterion for $H=c$ to admits a Birkhoff section with a unique boundary orbit.

\begin{theorem}\label{thm:oneboundaryforH}
	Assume $H=c$ is a regular level set. Then the following are equivalent.
	\begin{itemize}
		\item[(i)] $H=c$ admits a disk-like global surface of section;
		\item[(ii)] $H=c$ admits a Birkhoff section with a unique boundary orbit;
		\item[(iii)]  Both sublevel sets $H_1^c,H_2^c$ are isomorphic to a disk. And there exist a $c_0\in (0,c)$ such that there is no critical point for $H_1>c_0$ and $H_2>c-c_0$.
	\end{itemize}
\end{theorem}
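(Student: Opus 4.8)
The plan is to prove the cycle (i) $\Rightarrow$ (ii) $\Rightarrow$ (iii) $\Rightarrow$ (i). The implication (i) $\Rightarrow$ (ii) is immediate, since a disk-like global surface of section is in particular a Birkhoff section whose boundary is a single simply-covered periodic orbit.

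For (iii) $\Rightarrow$ (i) I would build the section explicitly and feed it into Theorem \ref{thm:main}. Fix $c_0$ as in (iii); since $H_1,H_2$ have only finitely many critical values below $c$, we may move $c_0$ inside the interval on which the hypothesis holds so that $c_0$ is a regular value of both factors. Because $H_1^c,H_2^c$ are disks and there are no critical points in the relevant ranges, $\{H_1=s\}$ is a single circle for $s\in[c_0,c)$ and $\{H_2=c-s\}$ is a single circle for $s\in(0,c_0]$, so $H=c$ is the union of two solid tori $L=\bigcup_{s\le c_0}T_s$ and $U=\bigcup_{s\ge c_0}T_s$ glued along $T_{c_0}$. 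I would then prescribe a transverse curve of type $(1,0)$ on every torus over a parameter $s<c_0$ and of type $(0,-1)$ over every $s>c_0$; both are legitimate by Lemma \ref{lem:pqtransversecurve}, because on a regular torus the two return-time functions are finite and positive, so by Lemma \ref{lem:separable}(ii) the flow direction lies in the \emph{open} first quadrant and is never proportional to $(1,0)$ or to $(0,-1)$. Three checks, all direct consequences of Section \ref{sec:4}, settle the fibres over $s\neq c_0$: across a broken torus arising from a saddle of $H_1$ (vertical invariant lines, Lemma \ref{lem:separable}(iii)) the $(1,0)$-type survives, since in Proposition \ref{pro:nontransbroken} one has $p^-_i=p^+_j=1\neq0$ and $\sum q^-_i=0=\sum q^+_j$, and symmetrically $(0,-1)$ survives every saddle of $H_2$; across an elliptic critical orbit below $c_0$ (a vertical end) the $(1,0)$-type caps off transversely by Lemma \ref{lem:endoftori}, its second coordinate being $0$, and symmetrically above $c_0$; and $\gamma_c=\{H_1=c\}\times\{H_2=0\}$ is a horizontal end on which the $(0,-1)$-type caps off, again by Lemma \ref{lem:endoftori}. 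It remains to match the two families across $c_0$. If $H_1$ has no critical value in $(0,c)$, i.e. the minimum is its only critical point below $c$, take instead the $(0,-1)$-type for all $s$: then $\gamma_0=\{H_1=0\}\times\{H_2=c\}$, the core of $L$, is the unique, simply-covered boundary orbit, it is unknotted (being the core of a Heegaard solid torus), and $S$ is a disk; the symmetric case handles $H_2$ having only its minimum. Otherwise both $H_1$ and $H_2$ possess a saddle below $c$; the rotation number of the flow on $T_s$ tends to $\infty$ as $s$ approaches a vertical broken torus and to $0$ as it approaches a horizontal one, so by the intermediate value theorem some $T_{s_0}$ carries periodic orbits of type $(1,n)$ (or $(n,1)$) with $n\geq1$, and gluing the $(1,0)$-curves below $s_0$ to $(0,-n)$-curves above $s_0$ produces, by Proposition \ref{pro:nontranstorus} with $k=-1$, a single simply-covered boundary orbit of that type, which is unknotted, and a genus computation as in Theorem \ref{thm:birktoric}(4) gives genus $0$. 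In all cases the boundary is simply covered, so $S$ is embedded, hence a disk-like global surface of section.

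For (ii) $\Rightarrow$ (iii), suppose $S$ is a Birkhoff section with a single boundary orbit $\gamma$ lying over a parameter $s_0\in[0,c]$. By Corollary \ref{cor:necefordisk} every regular torus $T_s$ carries a transverse curve of type $(p,0)$ or $(0,q)$; by Lemma \ref{lem:invarianthomology} this type is locally constant in $s$, and by Propositions \ref{pro:nontranstorus} and \ref{pro:nontransbroken} any change of type, or the appearance of the ``wrong-orientation'' type at a broken torus, forces a boundary orbit, which by uniqueness must sit over $s_0$. Together with Lemma \ref{lem:endoftori} applied at $\gamma_0$ (a vertical end) and $\gamma_c$ (a horizontal end) — which cannot be boundary orbits when $s_0\notin\{0,c\}$ — this shows the type is a fixed $(p^-,0)$ with $p^-\neq0$ for $s<s_0$ and a fixed $(0,q^+)$ with $q^+\neq0$ for $s>s_0$. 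Consequently Proposition \ref{pro:nontransbroken} forbids any horizontal broken torus over $s<s_0$ and any vertical one over $s>s_0$, and Lemma \ref{lem:endoftori} forbids the corresponding elliptic critical orbits; translating this through Lemma \ref{lem:separable}(iii), $H_1$ has no critical point with value $>s_0$ and $H_2$ has none with value $>c-s_0$, which is the $c_0$-statement with $c_0=s_0$. Finally, were $H_1^c$ not a disk it would carry a handle or an extra boundary component, each of which is created by a saddle of $H_1$; since near $\gamma_c$ the transverse curves have first coordinate $0$, tracing the two level-circles joined by that saddle up to $\{H_1=c\}$ makes Proposition \ref{pro:nontransbroken} (vertical case, which requires the first coordinates to agree and to be nonzero) fail there, producing an extra boundary orbit — a contradiction; the symmetric argument gives that $H_2^c$ is a disk.

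The step I expect to be the main obstacle is the matching across $c_0$ in (iii) $\Rightarrow$ (i) — one must actually locate a rational torus carrying an unknotted periodic orbit, which relies on the fine behaviour of the rotation number between broken tori — together with the topological bookkeeping throughout when the level sets $\{H_i=\mathrm{const}\}$ are disconnected, so that broken tori and elliptic orbits come in several components and Propositions \ref{pro:nontranstorus}--\ref{pro:nontransbroken} must be applied family by family while tracking which family merges with which and with what homology type; in particular, ruling out the possibility that the unique boundary orbit hides on a broken torus sitting exactly over $s_0$, rather than on a rational torus, requires care.
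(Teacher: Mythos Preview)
Your proposal follows essentially the same strategy as the paper; the main organizational difference is that you run the cycle (i)$\Rightarrow$(ii)$\Rightarrow$(iii)$\Rightarrow$(i), merging into your (iii)$\Rightarrow$(i) what the paper splits into (iii)$\Rightarrow$(ii) and (ii)$\Rightarrow$(i). Your construction of the section with types $(1,0)$ below and $(0,-n)$ above a suitable rational torus, and the rotation-number argument locating a $(1,n)$ torus near a hyperbolic end of the connecting family, are exactly what the paper does; your Euler-characteristic dichotomy (no saddle of $H_i$ below $c$ forces $H_i$ to have only its global minimum there, since $\chi(H_i^c)=1$) is a clean justification the paper leaves implicit.

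The place where the paper has more structure than your outline is (ii)$\Rightarrow$(iii). The paper factors this through three lemmas: Lemma~\ref{lem:nohyperboundary} (the unique boundary orbit cannot sit on a broken torus), Lemma~\ref{lem:uniquefamily} (there is exactly one family of tori connecting $H_1$ and $H_2$, because each such family forces its own boundary orbit), and Lemma~\ref{lem:c} (a unique connecting family implies the disk and $c_0$ conditions). Your direct argument --- ``the type is $(p^-,0)$ for all $s<s_0$ and $(0,q^+)$ for all $s>s_0$'' --- is morally the same, but as written it is slightly circular: you use that there are no horizontal broken tori below $s_0$ to propagate the $(p,0)$-type, while you want to \emph{deduce} the absence of such broken tori from the type. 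The fix is exactly the content of Lemma~\ref{lem:nohyperboundary}: once you know the boundary orbit is not on a broken torus, every broken torus is transversal, and then Proposition~\ref{pro:nontransbroken} forces all families adjacent to a vertical broken torus to carry the \emph{same} nonzero $p$ (hence type $(p,0)$ by Corollary~\ref{cor:necefordisk}), and dually for horizontal; a connectivity argument then pins down the two regions. You flag precisely this issue in your final paragraph, so you are aware of the gap; just be explicit that Lemma~\ref{lem:nohyperboundary} is the missing ingredient.

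One small point: in (iii)$\Rightarrow$(i) you should check that the rational torus $T_{s_0}$ you select lies in the interval where the $c_0$-condition holds (so that the $(1,0)$-side really sees only vertical critical sets and the $(0,-n)$-side only horizontal ones). This is automatic once you observe that the connecting family spans exactly the maximal such interval --- its lower end is the largest saddle value of $H_1$ and its upper end is $c$ minus the largest saddle value of $H_2$ --- but it deserves a sentence.
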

\begin{remark}
	$(iii)$ implies that the level set $H=c$ is isomorphic to $S^3$. 
\end{remark}

Note that the two ends of every family of tori correspond to critical points of $H_1$ or $H_2$. We introduce the following definition for convenience.
\begin{definition}
	If two ends of a family of tori $T_s$ correspond to critical points of $H_1$ and $H_2$, respectively. we say that $T_s$ $connects$ $H_1$ and $H_2$. More specifically, we can say $T_s$ connects an elliptic critical points of $H_i$ and a hyperbolic critical points of $H_j$, where $i,j\in {1,2}$.
\end{definition}
If $H=c$ is a connected set, then all critical points of $H_1$ and $H_2$ can be connected through finitely many families of tori. $H_1=0$ and $H_2=0$ are two necessary critical points, thus there is at least one family of tori connecting $H_1$ and $H_2$. Clearly, if $T_s$ connect an elliptic critical points of $H_1$ and an elliptic critical points of $H_2$, then the closure of $T_s$ is a connected component of $H_c$, which is isomorphic to $S^3$.

\begin{lemma}\label{lem:nohyperboundary}
	The periodic orbit corresponding to a hyperbolic critical point of $H_1$ or $H_2$ can not be the unique boundary orbit of a Birkhoff section.
\end{lemma}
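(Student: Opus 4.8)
The plan is to suppose, for contradiction, that the periodic orbit $P$ associated with a hyperbolic critical point of (say) $H_1$ is the unique boundary orbit of a Birkhoff section $S$, and then derive a contradiction by tracking homology types of the transverse curves through the families of tori that meet the broken torus $B$ containing $P$. By Lemma \ref{lem:separable}(iii), $B$ is a broken torus with invariant \emph{vertical} lines, and $P$ is one of these vertical lines. First I would apply Theorem \ref{thm:main}, parts (4) and (5): since $P\subset B$ is the only boundary orbit, every \emph{other} broken torus $B'$ in $H=c$ satisfies $B'\cap\partial S=\emptyset$, so the homology types of transverse curves on the two sides of $B'$ are constrained by the equalities in (4)–(5); and every family of tori terminating at a periodic orbit which is \emph{not} $P$ must, by parts (1)–(2), have a transverse curve of type $(p,0)$ (if it ends in a vertical line) or $(0,q)$ (if it ends in a horizontal line).

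The key step is to exploit Corollary \ref{cor:necefordisk}: since $S$ has a unique boundary orbit, \emph{every} transverse curve $S\cap T$ on \emph{every} invariant torus $T$ has homology type $(p,q)$ with $p=0$ or $q=0$. Now consider the broken torus $B$ with its invariant vertical lines, one of which is $P$. Look at the family of tori $T^{\epsilon}_1$ whose limit set is a single open annulus $O_1\subset B$ having $P$ on (one of) its boundaries — such a family exists because $B$ is connected and, regarded as a directed graph on $S^2$ as in the proof of Proposition \ref{pro:nontransbroken}, it has a face adjacent to the edge/vertex corresponding to $P$. On the other side of $O_1$ sits another family $T_1^{-\epsilon}$, and near $P$ the transverse curves must, by Lemma \ref{lem:homologyonbro} and Proposition \ref{pro:homology}, have first coordinate $p_1^{\pm}=p_0$, the common ``vertical intersection number'' of $\Gamma_0=B\cap S$; moreover transversality forces $p_0\neq 0$ unless $B\cap\partial S\neq\emptyset$. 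Here is where the hyperbolicity enters: because $P$ corresponds to a hyperbolic (saddle) critical point, $P$ appears in $B$ as an invariant vertical line bounding \emph{at least two} distinct open annuli — the local picture of a saddle is four separatrices, so the vertical line $P$ has degree $\ge 4$ in the graph, hence $P$ must genuinely be a boundary orbit only if the intersection data $\Gamma_0$ is inconsistent with being glued off. Concretely, I would argue: if $P$ is a boundary orbit then $\Gamma_0\cap P\neq\emptyset$, so $\Gamma_0$ has a vertical segment along $P$; but then on one of the adjacent open annuli the limiting transverse curve runs vertically, forcing the nearby family $T^{\epsilon}$ to have a transverse curve whose slope approaches the vertical, i.e. the vertical vector $(0,1)$ becomes (asymptotically) a tangent of the linear flow on $T^{\epsilon}$. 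Since $(0,1)$ is precisely a tangent direction of the flow on the limit annulus (that annulus is an invariant set with vertical flow lines — it limits onto vertical periodic orbits), and the flow varies continuously, Lemma \ref{lem:pqtransversecurve} and the argument of Proposition \ref{pro:nopq} are contradicted unless the transverse curves on $T^{\epsilon}$ have first coordinate $0$; but by Corollary \ref{cor:necefordisk} applied on the \emph{other} side, propagating the $(p,0)$ vs $(0,q)$ dichotomy through all intervening broken tori via Theorem \ref{thm:main}(4)–(5), one forces a family of tori terminating at an elliptic periodic orbit to carry a transverse curve with \emph{both} coordinates nonzero, contradicting Theorem \ref{thm:main}(1)–(2).

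To organize this cleanly I would: (1) set up the graph-on-$S^2$ picture of $H=c$ with vertices = periodic orbits, edges = open annuli of broken tori, faces = families of tori, each face labelled by the homology type $(p_f,q_f)$ of its transverse curve, which by Corollary \ref{cor:necefordisk} has a zero coordinate; (2) observe that Theorem \ref{thm:main}(4)–(5) says: across any edge belonging to a broken torus \emph{not} equal to $B$, the label is ``conserved'' in the appropriate coordinate, so the labels are essentially locally constant away from $B$; (3) note that a face meeting an \emph{elliptic} terminal orbit of $H_1$ via a vertical line must have $q_f=0$ and via $H_2$ a horizontal line must have $p_f=0$, while $H_1=0$ and $H_2=0$ are both elliptic (global minima, assumption (i)), so there is a chain of families connecting an elliptic orbit of $H_1$ to an elliptic orbit of $H_2$; (4) trace this chain: it forces some face to simultaneously satisfy $p_f=0$ (inherited from the $H_1$ side) and $q_f=0$ (inherited from the $H_2$ side), hence $(p_f,q_f)=(0,0)$, which is impossible by Lemma \ref{lem:pqtransversecurve} — \emph{unless} the chain passes through $B$, where the hyperbolic vertex $P$ sits and where the conservation law is allowed to fail precisely because $P$ is the boundary orbit. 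The contradiction is that $B$ with a \emph{hyperbolic} vertex still cannot absorb this jump: the branching structure at a saddle (vertical line $P$ of degree $\ge 4$, splitting into $\ge 2$ annuli on each side by Lemma \ref{lem:twofamily}) means the ``defect'' $\sum q_i^- - \sum q_j^+$ can be made zero by redistributing, exactly as in the proof of Proposition \ref{pro:nontransbroken}, so $B$ can be made transverse — contradicting that $P$ was forced to be a boundary orbit. Hence no hyperbolic periodic orbit can be the unique boundary orbit.

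The main obstacle I anticipate is step (4): making rigorous the claim that a broken torus with a hyperbolic vertex ``cannot absorb the homological jump'' that a genuine unique boundary orbit would require. This amounts to showing that the redistribution argument of Proposition \ref{pro:nontransbroken} always succeeds at a saddle-type broken torus when one is only trying to keep a \emph{single} specified orbit $P$ non-transverse — i.e. that one cannot isolate the defect onto the single vertical line $P$ without also violating the $(p,0)/(0,q)$ dichotomy on some adjacent family. I would handle this by a parity/counting argument on the directed graph $B$: the vertical line $P$ being a saddle means it lies on a cycle of edges within $B$, and one can route the transverse curve around that cycle (rather than along $P$) to remove the boundary orbit, using Corollary \ref{cor:homotopy} and Lemma \ref{lem:embedded} to realize the deformation by an embedded transverse curve. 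Getting the orientations/signs right in this rerouting — so that the flow stays positively transverse throughout — is the delicate part, but it is structurally identical to the layer-by-layer solvability argument already carried out in the proof of Proposition \ref{pro:nontransbroken}, so I expect it to go through.
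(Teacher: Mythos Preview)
Your proposal correctly identifies most of the ingredients --- the $(p,0)/(0,q)$ dichotomy from Corollary~\ref{cor:necefordisk}, the constraints at elliptic orbits from Theorem~\ref{thm:main}(1)--(2), and the need to trace a chain from the $H_1$-side to the $H_2$-side --- but the argument collapses exactly at your step~(4), and the fix you propose is a wrong turn.

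Your attempted contradiction is: ``the defect $\sum q_i^- - \sum q_j^+$ can be made zero by redistributing \ldots\ so $B$ can be made transverse --- contradicting that $P$ was forced to be a boundary orbit.'' This is not a contradiction. Nothing in the hypotheses says $P$ is \emph{forced} to be a boundary orbit; we merely \emph{assume} that the given section $S$ has $\partial S = P$. Showing that some \emph{other} section could be made transverse at $B$ says nothing about $S$. The redistribution argument of Proposition~\ref{pro:nontransbroken} is a construction-side tool (it builds a transverse section when the numerical conditions hold), not a constraint on a given $S$; invoking it here proves only that $P$ is a \emph{removable} boundary orbit in the sense of Fig.~\ref{fig:removable}, which is perfectly compatible with $\partial S = P$.

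The paper's argument is much shorter and rests on a fact you almost state but then abandon: even when $P\subset\partial S$, the first coordinate $p$ is \emph{still} preserved across every open annulus of $B$. The reason is that $S$ is transverse on each open annulus $A\subset B$ (only $P$ lies in $\partial S$), so $S\cap A$ is a well-defined collection of arcs; its algebraic intersection with any vertical line in the \emph{interior} of $A$ equals the first coordinate of the transverse curve on \emph{each} adjacent family, by exactly the limiting computation in Proposition~\ref{pro:homology}, which only ever intersects with vertical lines away from the invariant ones. Hence all families adjacent to $B$ share a common first coordinate $p_0$. Corollary~\ref{cor:necefordisk} then forces every such family to be of type $(p_0,0)$ (if $p_0\neq 0$) or $(0,q_i)$ (if $p_0=0$). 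Propagating away from $B$ through the remaining (transverse) broken tori via Theorem~\ref{thm:main}(4)--(5), the type stays of this same form on every family. But some family reaches the vertical elliptic orbit $H_1=0$ (forcing $q=0$, hence $p_0\neq 0$) and some family reaches the horizontal elliptic orbit $H_2=0$ (forcing $p=0$, hence $p_0=0$). Contradiction.

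So the missing idea is not a parity or rerouting argument on the graph of $B$; it is simply that the ``$p$ is constant across $B$'' half of Proposition~\ref{pro:homology} survives the presence of the boundary orbit $P$, because it is computed on interior vertical lines. Once you see that, the proof is three lines --- no need for the graph-on-$S^2$ machinery, the degree-$\geq 4$ observation about the saddle, or any rerouting.
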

\begin{proof}
	Assume the unique boundary orbit corresponds to a hyperbolic critical point of $H_1$, which is an invariant vertical line on a broken torus. By Corollary \ref{cor:necefordisk} and Theorem \ref{thm:main}, the first component of homology type of transverse curves on nearby tori must vanish. On the other hand, for transverse curves near other invariant vertical lines(i.e. other critical points of $H_1$), the second component of homology type must vanish. By Theorem \ref{thm:main}, a $(p,0)$-type transverse curve can not transition to a $(0,q)$-type transverse curve while maintaining the transversality. This is a contradiction.
\end{proof}

\begin{lemma}\label{lem:uniquefamily}
	If $H=c$ admits a Birkhoff section with a unique boundary orbit, then there is a unique family of tori connecting $H_1$ and $H_2$.
\end{lemma}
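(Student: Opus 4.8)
The plan is to translate the statement into a combinatorial assertion about the bifurcation graph of $H=c$ (vertices $=$ the singular invariant sets, i.e.\ periodic orbits at elliptic critical values of $H_1,H_2$ and broken tori at hyperbolic ones; edges $=$ the families of tori) and then feed in the homology constraints of Theorem~\ref{thm:main}. First, by Lemma~\ref{lem:nohyperboundary} the unique boundary orbit $\gamma$ of the Birkhoff section $S$ must come from an \emph{elliptic} critical point; relabelling $H_1\leftrightarrow H_2$ if necessary, assume it is an elliptic critical point of $H_1$, so $\gamma$ is an invariant vertical line at the end of a single family $\mathcal F_\gamma$. Since $\partial S=\gamma$ is the only boundary orbit, $S$ is transverse to every broken torus of $H=c$ and extends transversely across every periodic orbit except $\gamma$; in particular $S$ is transverse to every torus belonging to a family, so by Lemma~\ref{lem:invarianthomology} the curve $S\cap T_s$ on each family $e$ has a well-defined homology type $(p_e,q_e)$. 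I also fix the global orientation so that Lemma~\ref{lem:separable}(ii) gives $p_e\ge0\ge q_e$ for every family; the point I will use repeatedly is that a sum of the $q_e$'s over a collection of families vanishes only if each term vanishes, and likewise for the $p_e$'s.

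Reading off Theorem~\ref{thm:main}: at any elliptic critical point of $H_2$ the adjacent family has $p_e=0$; at any elliptic critical point of $H_1$ other than $\gamma$ the adjacent family has $q_e=0$; at $\gamma$ the family $\mathcal F_\gamma$ has $q_e\ne0$; across any saddle of $H_1$ (a broken torus with invariant vertical lines) all incident families share one and the same \emph{nonzero} value of $p$, with $\sum_A q_e=\sum_B q_e$ over the two sides $A,B$; and dually across any saddle of $H_2$. Call a family \emph{$H_1$-trivial} if $q_e=0$, \emph{$H_2$-trivial} if $p_e=0$ (no family is both, since $(0,0)$ is never a transverse type), and \emph{generic} otherwise. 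Using these constraints and the no-cancellation property one checks: a generic family meets an elliptic critical point only at $\gamma$; and if a saddle of $H_1$ (resp.\ $H_2$) is met by a generic family on one side, then both sides carry a generic family. Hence, in the subgraph $\mathcal G$ spanned by the generic families, every vertex except possibly $\gamma$ has degree $\ge2$, so $\mathcal G$ has at most one vertex of degree $1$. But a finite graph with an edge and at most one vertex of degree $1$ must contain a cycle; I claim no such cycle can exist, whence $\mathcal G$ has no edge, i.e.\ there are \emph{no} generic families and $\mathcal F_\gamma$ (having $q\ne0$) is $H_2$-trivial. Finally, an $H_2$-trivial family cannot meet a saddle of $H_1$ (where $p\ne0$) nor a non-$\gamma$ elliptic critical point of $H_1$ (where $q=0$); and every family connecting $H_1$ and $H_2$ has $q\ne0$ (it meets an $H_2$-critical set), hence is $H_2$-trivial, hence meets the $H_1$-side only at $\gamma$ and therefore coincides with $\mathcal F_\gamma$. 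As at least one connecting family exists (both $H_1=0$ and $H_2=0$ are critical and $H=c$ is connected), it is unique.

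The one step that is not purely formal, and which I expect to be the main obstacle, is the exclusion of a cycle in $\mathcal G$: such a cycle would run through generic families and broken tori only, and unwinding it yields a loop in $H=c$ meeting one of those generic tori transversally in a single point, i.e.\ a non-separating torus. I would dispose of this either by showing directly that the bifurcation graph of a regular level set $H=c$ of a separable Hamiltonian whose $H_i$ each have a single global minimum is a tree (by tracking how the two tree-shaped Reeb graphs of $H_1$ and $H_2$ interleave along the parameter $s=H_1\in[0,c]$), or by arguing that such a non-separating torus is incompatible with the existence of a Birkhoff section with a single boundary orbit, since the complement of that orbit fibres over $S^1$ with fibre $\mathrm{int}(S)$ and the tori of the families are, up to homology, carried by $\mathrm{int}(S)$ and hence null-homologous. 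A minor additional point is the bookkeeping at broken tori when the natural generators of adjacent families differ, but under the running hypothesis that they coincide this is routine.
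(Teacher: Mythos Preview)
Your argument has a genuine gap at the very first step: from Lemma~\ref{lem:nohyperboundary} you conclude that the unique boundary orbit $\gamma$ must come from an \emph{elliptic} critical point, but that lemma only excludes hyperbolic critical points. A third possibility remains and actually occurs: $\gamma$ can lie on a rational invariant torus in the interior of some family (this is precisely the case constructed in the implication (iii)$\Rightarrow$(ii) of Theorem~\ref{thm:oneboundaryforH}). In that situation $S$ is \emph{not} transverse to every torus in every family, so your assignment of a single homology type $(p_e,q_e)$ to each edge breaks down for the family through $\gamma$, and the graph-theoretic analysis that follows does not apply as written. Separately, your proposed route~(a) to exclude cycles---showing that the bifurcation graph of $H=c$ is always a tree---is false: when both $H_1$ and $H_2$ have a saddle whose two-component ranges overlap along the parameter $s$, the four product families form a $4$-cycle in the bifurcation graph. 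Route~(b) may be salvageable but needs real work.

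The paper's proof avoids all of this by arguing \emph{locally} rather than globally: it shows that every family $T_s$ connecting $H_1$ and $H_2$ must itself contain a boundary orbit of $S$. Either all tori in $T_s$ are transverse, in which case Corollary~\ref{cor:necefordisk} forces the type to be $(p,0)$ or $(0,q)$, and then Theorem~\ref{thm:main} forces a boundary orbit at one end of $T_s$ (necessarily an elliptic end by Lemma~\ref{lem:nohyperboundary}); or some torus in $T_s$ is not transverse and carries the boundary orbit directly. Since distinct connecting families have disjoint interiors and cannot share an elliptic endpoint (each elliptic point bounds exactly one family), distinct connecting families produce distinct boundary orbits. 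Uniqueness of $\partial S$ then gives uniqueness of the connecting family in one line, with no need to control the global structure of the bifurcation graph.
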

\begin{proof}
	Clearly, $H=c$ must be connected. Note that for an elliptic critical point, there is only one family of tori connecting it. If $T_s$ is a family of tori connecting an elliptic critical points of $H_1$ and an elliptic critical points of $H_2$. Then closure of $T_s$ is $S^3$, $T_s$ actually connects $H_1=0$ and $H_2=0$ and there is no other critical points. The proof is trivial in this case. Therefore, we assume that there is at least one hyperbolic critical point in the two limit sets of $T_s$.
	
	By Corollary \ref{cor:necefordisk}, if all tori of $T_s$ are transverse, then the homology type is $(p,0)$ or $(0,q)$. Since $T_s$ connects an invariant vertical line and an invariant horizon line. By Theorem \ref{thm:main}, there has to be a boundary orbit on two ends. By Lemma \ref{lem:nohyperboundary}, $T_s$ must connect an elliptic critical point and a hyperbolic critical point, with the elliptic point corresponding to a boundary orbit.
	
	If not all tori in $T_s$ are transverse, then there exists a boundary orbit on a rational torus. 
	
	We have proven that each family of tori connecting $H_1$ and $H_2$ produces a boundary orbit. The boundary orbit is either on a rational torus or corresponds to an elliptic critical point. Since different family of tori can not converge to the same elliptic point, the boundary orbits associated with different families of tori connecting $H_1$ and $H_2$ must be distinct. Recall our assumption that $H=c$ admits a Birkhoff section with a unique boundary orbit, there must be a unique family of tori connecting $H_1$ and $H_2$.
	
\end{proof}

\begin{lemma}\label{lem:c}
	If there is only one family of tori connecting $H_1$ and $H_2$, then both sublevel sets $H_1^c,H_2^c$ are isomorphic to a disk.Furthermore, there exist a $c_0\in (0,c)$ such that there is no critical points for $H_1>c_0$ and $H_2>c-c_0$.
\end{lemma}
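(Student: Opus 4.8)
The plan is to analyze what the hypothesis "only one family of tori connects $H_1$ and $H_2$" forces, using the fact (noted just before the lemma) that $H_1=0$ and $H_2=0$ are always critical points and that all critical points of $H_1$ and $H_2$ can be joined through finitely many families of tori. First I would pass to the ``connectivity graph'' picture: vertices are the critical values of $H_1$ (on one side) and of $H_2$ (on the other), and each family of invariant tori in $H=c$ is an edge joining a critical point of $H_1$ to one of $H_2$, or an edge joining two critical points of the same $H_i$ only in degenerate trivial situations. Since every critical point of $H_1$ must be reachable from $H_1=0$ and similarly for $H_2$, and the $H_1$- and $H_2$-critical points can only communicate through families connecting $H_1$ and $H_2$, having exactly one such connecting family means: all $H_1$-critical points lie on one ``chain'' of families internal to $H_1$, all $H_2$-critical points lie on one ``chain'' internal to $H_2$, and these two chains meet in a single connecting family $T_s$.

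Next I would translate this into Morse-theoretic information about $H_1^c$ and $H_2^c$. The level set $H=c$ being regular means $c$ is not a critical value of $H$; the decomposition $\{H_1=s\}\times\{H_2=c-s\}$, $s\in[0,c]$, shows that the critical values of $H_1$ that are relevant are those in $[0,c]$ and likewise critical values of $H_2$ in $[0,c]$, paired by $s\leftrightarrow c-s$. The hypothesis that there is a \emph{unique} connecting family says that there is exactly one value $s_0\in(0,c)$ at which the component structure of $\{H_1=s\}$ and $\{H_2=c-s\}$ bridges, i.e. for $s$ in the part of $(0,c)$ on the $H_1$-side only one component of $\{H_1=s\}$ survives and similarly on the $H_2$-side; any extra local maximum of $H_1$ with critical value $\le c$ beyond $H_1=0$, or any saddle, would create an additional end of a family and hence (by the connectivity count, together with Lemma \ref{lem:nohyperboundary} and Lemma \ref{lem:uniquefamily}) an additional connecting family. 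This forces: above some threshold $c_0\in(0,c)$, $H_1$ has no critical points (the region $\{H_1>c_0\}$ is a collar of the single surviving level, hence $\{H_1\le c_0\}\simeq\{H_1\le c\}=H_1^c$), and symmetrically $H_2$ has no critical points for $H_2>c-c_0$. Here $c_0$ is chosen as the largest critical value of $H_1$ below $c$ (if $H_1$ had a critical value in $(0,c)$) or slightly less than $c$ otherwise; the matching condition $H_2>c-c_0$ falls out of the pairing $s\leftrightarrow c-s$ combined with the assumption that $H=c$ is regular, which prevents a critical value of $H_1$ and of $H_2$ from summing to $c$.

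Finally, to conclude that $H_1^c$ and $H_2^c$ are each isomorphic to a disk: since $H_1$ attains a unique global minimum with value $0$, $H_1^{\epsilon}$ is a disk for small $\epsilon>0$; then one runs the standard Morse-theory handle-attachment argument from $\epsilon$ up to $c$, and the point is that the only critical points one may cross must be of index $0$ or $1$ (a new index-$2$ critical point, i.e. a local maximum below level $c$, would close off a component and again produce an extra end contradicting uniqueness of the connecting family; an index-$1$ saddle that disconnects would do the same), and in fact the uniqueness forces that no handles at all are attached that change the disk type — each sublevel set stays a single disk because there is always exactly one relevant component. Hence $H_1^c\simeq D^2$ and $H_2^c\simeq D^2$.

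The main obstacle I expect is making the ``graph of families'' bookkeeping fully rigorous: precisely matching (a) the combinatorial statement ``one connecting family'' with (b) the Morse data ``at most one nontrivial critical value on each side, and it is of the right index,'' while correctly handling possibly degenerate (non-Morse, though isolated) critical points and the pairing $s\leftrightarrow c-s$ together with the regularity of $c$. In particular one must rule out configurations where extra critical points of $H_1$ (resp.\ $H_2$) are ``hidden'' below the connecting family's range in a way that does not create a new connecting family but still destroys the disk property — showing these cannot occur is where Lemma \ref{lem:nohyperboundary} and the connectedness of $H=c$ have to be invoked carefully.
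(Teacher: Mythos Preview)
Your outline has the right instinct---connectivity of the foliation forces constraints on the Morse data---but there are genuine gaps that prevent it from being a proof, and the paper's argument is both simpler and different in structure.

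\textbf{Wrong lemmas invoked.} You appeal to Lemma~\ref{lem:nohyperboundary} and Lemma~\ref{lem:uniquefamily} to rule out extra critical points. But those lemmas concern Birkhoff sections with a unique boundary orbit; the hypothesis of Lemma~\ref{lem:c} is purely about the torus foliation (``only one family connects $H_1$ and $H_2$'') and makes no reference to a Birkhoff section. Invoking them here is circular with respect to how the paper assembles the proof of Theorem~\ref{thm:oneboundaryforH}.

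\textbf{The graph model is inaccurate.} You assert that families of tori joining two critical points of the \emph{same} $H_i$ occur ``only in degenerate trivial situations.'' This is false: whenever $H_1$ has two critical values $s_1<s_2$ in $(0,c)$ and $H_2$ is regular on $[c-s_2,c-s_1]$, there are families whose two ends are both $H_1$-critical. These are exactly the families the paper strings together when it decreases $H_1$ from a level curve down to a hyperbolic point. Because such internal families exist, your claim that ``extra critical point $\Rightarrow$ extra connecting family'' needs a real argument, not a count of edges.

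\textbf{The choice of $c_0$ is not justified.} Taking $c_0$ to be the largest critical value of $H_1$ below $c$ does not automatically make $(c-c_0,c)$ free of $H_2$-critical values; regularity of $c$ only prevents a single pair from summing to exactly $c$. The paper handles this by contradiction: if the largest critical values $H_1(a),H_2(b)$ satisfy $H_1(a)+H_2(b)>c$, it explicitly builds \emph{two} connecting families---one by flowing down from the critical point $a$, another by flowing down from the top level $H_1=c$---and observes their projections to $H_1^c$ are disjoint.

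\textbf{What the paper actually does.} Both assertions are proved by direct contradiction, with no graph formalism and no Morse handle-attachment. For the disk claim: if $\{H_1=c\}$ had two components $l_1,l_2$, decrease $H_1$ from each until you hit a hyperbolic critical point; since $l_i\times\{H_2=0\}$ is an elliptic end, each descent produces a chain of families containing at least one that connects $H_1$ to $H_2$, and the two chains are disjoint, giving two connecting families. A single boundary circle in $\mathbb{R}^2$ then forces $H_1^c$ to be a disk. The $c_0$ argument is the analogous two-families construction just described. This is considerably shorter than making the bookkeeping you outline rigorous.
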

\begin{proof}
	We first proof that sublevel set $H_1^c:=\{x\in\mathbb{R}^2| H_1(x)\le c\}$ is isomorphic to a disk. If $H_1=c$ has more than one component. Denote $l_1$ and $l_2$ to be two different component of $H_1=c$. By decreasing the value of $H_1$, we get two families of closed curves starting from $l_1$ and $l_2$, respectively. We continue this process until the family of curves touch a critical point of $H_1$, which is necessarily a hyperbolic critical point. Note that $l_1,l_2$ correspond to the critical point $H_2=0$. As $l_1$ approaches its corresponding hyperbolic critical point, we get some families of tori, connected end-to-end, linking the two critical points. This implies a family of tori connecting $H_1$ and $H_2$. So is it for $l_2$. Since the two families of curves starting from $l_1$ and $l_2$ are disjoint, we have two distinct families of tori connecting $H_1$ and $H_2$. This contradicts to Lemma \ref{lem:uniquefamily}. Thus $H_1=c$ has only one component, and $H_1^c$ is isomorphic a disk.
	
	For existence of $c_0$, we also prove by contradiction. Let $a,b$ be two critical points corresponding to largest critical values of $H_1,H_2$, respectively.  If no such $c_0$ exists, then $H_1(a)+H_2(b)>c$. By decreasing along level sets of $H_1$ starting from $a$, as the argument in last paragraph, we get a family of tori connecting $H_1$ and $H_2$. Also, there is a family of tori connecting $H_1$ to $H_2$ obtained by decreasing the level set of $H_1$ from $H_1=c$ to a hyperbolic critical point. Now we have two families of tori connecting $H_1$ and $H_2$. Since their projection to $H_1^c$ are disjoint, the two families are distinct. Again, we get a contradiction. The proof is complete.
\end{proof}
\textsl{Proof of Theorem \ref{thm:oneboundaryforH}:}	\textbf{(iii)$\Rightarrow$(ii)}
	 By assumption, the set $\{H_1=c_0\}\times \{H_2=c-c_0\}$ is a single invariant torus. Let $T_s$ be the family containing this torus, then $T_s$ is the unique family of invariant tori connecting $H_1$ and $H_2$. Any invariant torus in $T_s$ divides the energy level set $H=c$ into two parts: one part contains only critical points of $H_1$, and the other part contains only critical points of $H_2$. 
	 
	 If there is a $(p,q)$ rational torus in $T_s$, following the proof of Theorem \ref{thm:birktoric}, we can construct a Birkhoff section whose boundary orbit is a $(p,q)$ periodic orbit on the torus. For the tori in the part containing critical points of $H_1$, the transverse curves have homology type $(p,0)$. For the tori in the other part, the transverse curves have homology type $(0,-q)$.
	
	If all tori in $T_s$ are irrational, it is necessary that the flow on all tori are the same irrational flow and both $H_1$ and $H_2$ have no critical points in $(0,c)$. $H=c$ must be the boundary of an irrational ellipsoid. Thus, either $H_1=0$ or $H_2=0$ bounds a disk-like global surface of section. 
	
	\textbf{(ii)$\Rightarrow$(iii)} This is a consequence of Lemma \ref{lem:uniquefamily} and Lemma \ref{lem:c}.
	
	\textbf{(i)$\Leftrightarrow$(ii)} Clearly, $(i)$ implies $(ii)$. It remains to prove that $(ii)$ implies $(i)$. By Lemma \ref{lem:uniquefamily}, there is a unique family of tori $T_s$ connecting $H_1$ and $H_2$. If it connects an elliptic critical point of $H_1$ and an elliptic critical points of $H_2$, the proof is trivial. Thus we assume there is a hyperbolic critical point in limit set of $T_s$. Clearly, the torus $\{H_1=c_0\}\times \{H_2=c-c_0\}$ belongs to $T_s$. As argued above, every $(p,q)$ periodic orbit in $T_s$ bounds a global surface of section with genus $\frac{(p-1)(q-1)}{2}$. Near the hyperbolic critical point, the linear flow on the nearby invariant tori becomes nearly horizontal or vertical. We can find a $(p,q)$ rational torus with $p$ or $q$ to be $1$. The corresponding global surface of section then has genus $0$, which is a disk-like global surface of section.
\qed

At the end of this section, we present a simple example to illustrate the meaning of Theorem \ref{thm:oneboundaryforH}.

\begin{example}\label{exm:5.10}
	Let $H_1=H_2=\frac{1}{2}x^2+g(y)$. Then $g(y)$ determines the critical points of $H_1$ and $H_2$. Consider the case where $g(y)$ has three critical points, as illustrated in Fig.\ref{fig:g_1(y)}: two local minimum at values $0, 0.2$ and a local maximum at $0.3$.
	
	Consider the energy level set $H=H_1+H_2=c$. 
	\begin{itemize}
		\item For $c\in (0,0.2)$, $H=c$ is isomorphic to $S^3$ with only one family of invariant tori. Either $H_1=0$ or $H_2=0$ bounds a disk-like global surface of section;
		\item For $c\in (0.3,0.4)$, one can verify that $H=c$ is isomorphic to $S^3$. The condition in $(iii)$ of Theorem \ref{thm:oneboundaryforH} does not holds. Hence, no disk-like global surface of section exists;
		\item For $c> 0.6$, $H=c$ is still isomorphic to $S^3$. By Theorem \ref{thm:oneboundaryforH}, it admits disk-like global surface of section.
	\end{itemize}

\end{example}

\begin{figure}
	
	\centering
	\includegraphics[width=0.5\linewidth]{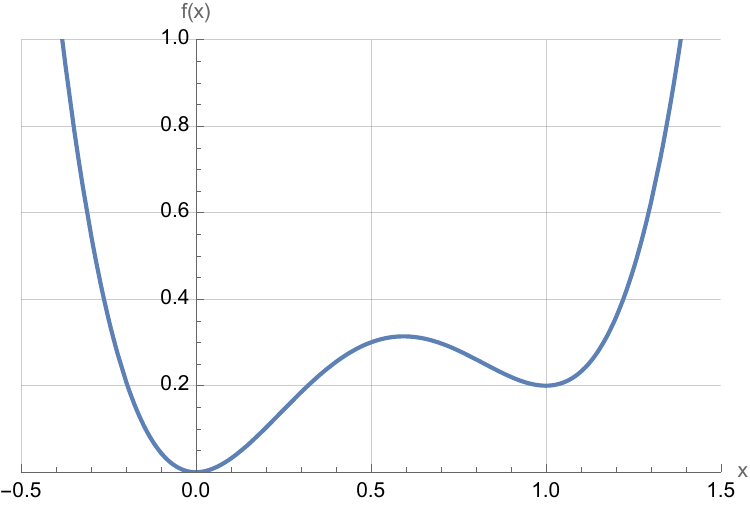}
	
	\caption{Graph of $g(y)\le 1$.}
	\label{fig:g_1(y)}
\end{figure}

\textbf{Acknowledgements:} The authors are grateful to P.A.S. Salom\~{a}o for pointing out an error in the first draft of this article. They also thank O. van Koert, Jun Zhang, and Jianfeng Lin for valuable discussions.

\bibliographystyle{abbrv}
\bibliography{reference.bib}

\end{document}